\numberwithin{equation}{section}
\def\ca{{\mathcal A}}
\def\cb{{\mathcal B}}
\def\cc{{\mathcal C}}
\def\cd{{\mathcal D}}
\def\cf{{\mathcal F}}
\def\cai{{\mathcal I}}
\def\cn{{\mathcal N}}
\def\car{{\mathcal R}}
\def\cs{{\mathcal S}}
\def\bc{{\mathbb C}}
\def\bn{{\mathbb N}}
\def\br{{\mathbb R}}
\def\bt{{\mathbb T}}
\def\bz{{\mathbb Z}}
\def\ga{{\mathfrak A}}
 \def\gph{{\mathfrak h}}
\def\a{\alpha}
\def\b{\beta}
\def\g{\gamma}        \def\G{\Gamma}
\def\d{\delta}        \def\D{\Delta}
\def\eps{\varepsilon}
\def\th{\vartheta}    
\def\j{\iota}
\def\l{\lambda}       \def\La{\Lambda}
\def\m{\mu}
\def\n{\nu}
\def\r{\rho}
\def\s{\sigma}        
\def\t{\tau}
\def\f{\varphi}
\def\om{\omega}        
\newtheorem{Thm}{Theorem}[section]
\newtheorem{Cor}[Thm]{Corollary}
\newtheorem{Prop}[Thm]{Proposition}
\newtheorem{Lemma}[Thm]{Lemma}
\theoremstyle{definition}
\newtheorem{Dfn}[Thm]{Definition}
\newtheorem{Exmp}[Thm]{Example}
\theoremstyle{remark}
\newtheorem{Rem}[Thm]{Remark}
\newcommand{\set}[1]{\left\{#1\right\}}
\newcommand{\norm}[1]{\| #1 \|}
\def\itm#1{\item{$(#1)$}}
\newcommand{\conv}{\text{conv}\,}
\newcommand{\ssv}{\text{span}\,}
\newcommand{\diam}{\text{diam}\,}
\newcommand{\meno}{\setminus}
\newcommand{\vuoto}{\emptyset}
\newcommand{\ccr}{\textrm{CCR}}
\newcommand{\comb}{\dashv}
\newcommand{\coimplies}{\ \Longleftarrow\ }
\def\supp{\mathop{\rm supp}}
\newcommand{\nn}{\nonumber}
\def\im{\mathop{\rm Im}}
\DeclareMathOperator{\vol}{vol}
\begin{document}

\title[Bose Einstein condensation]
{Bose Einstein condensation on inhomogeneous amenable graphs}
\author{Francesco Fidaleo, Daniele Guido, Tommaso Isola}
\address{Dipartimento di Matematica,
Universit\`{a} di Roma Tor Vergata, 
Via della Ricerca Scientifica 1, Roma 00133, Italy} 

\email{fidaleo@mat.uniroma2.it, guido@mat.uniroma2.it,
isola@mat.uniroma2.it} 

\dedicatory{Dedicated to the memory of our friend and colleague Claudio D'Antoni (1950-2010)}

\thanks{The authors were partially supported by MIUR, GNAMPA, by
the European Network ``Noncommutative Geometry" MRTN--CT--2006-031962, and by 
 the ERC Advanced Grant 227458 OACFT ``Operator Algebras and
Conformal Field Theory''.}%

\subjclass[2000]{82B20; 82B10;  46Lxx. }%
\keywords{Bose--Einstein condensation, Perron--Frobenious theory, amenable inhomogeneous graphs.}%

\date{\today}

\begin{abstract}
We investigate the Bose--Einstein Condensation on 
nonhomogeneous amenable networks for the model describing arrays of 
Josephson junctions. The resulting topological model, whose 
Hamiltonian is the pure hopping one given by the opposite of the 
adjacency operator, has also a mathematical interest in itself. We 
show that for the nonhomogeneous networks like the 
comb graphs, particles condensate in momentum and configuration 
space as well. 
In this case different properties of the network, of geometric and probabilistic nature, such as the volume growth, the shape of the ground state, and the transience, all play a r\^ole in the condensation phenomena.
The situation is quite 
different for homogeneous networks where just one of these parameters, e.g. the volume growth, is enough to determine the appearance of the condensation.
\end{abstract}

\maketitle

\section{Introduction}
This paper is devoted to the analysis of thermodynamical states on complex networks with pure hopping Hamiltonian, in particular of those exhibiting Bose--Einstein condensation (BEC for short). Here the network is described by an infinite topological graph $X$, 
where we consider free Bosons  described by the Canonical Commutation Relations on (a suitable dense subspace of) $\ell^2(VX)$, see \cite{BR1, BR2} for references. 
The so called {\it pure hopping Hamiltonian}  is the free Hamiltonian described, on the one particle space $\ell^2(VX)$, by
\begin{equation}
\label{phop}
H:=\|A\|I-A\,,
\end{equation}
where $A$ is the adjacency operator acting on $\ell^2(VG)$, and the normalization constant is chosen in order to get a positive Hamiltonian $H$.

The study of BEC on infinite graphs with pure hopping Hamiltonian, in particular for the so called comb graph (see fig.~\ref{Fig8}),
   \begin{figure}[ht]
 	 \centering
	 \psfig{file=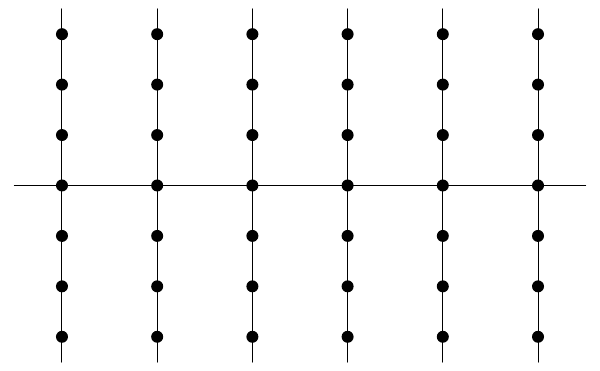,height=1.5in} 
	 \caption{Comb graph.}
	 \label{Fig8}
     \end{figure}
started with a series of papers (cf. \cite{BCRSV} and references therein) motivated by  the relevance of the comb graphs in describing physical phenomena such as arrays of Josephson junctions, and then continued in \cite{ABO,M}. As observed in \cite{BCRSV}, the simultaneous choice of an inhomogeneous graph (the comb graph) and of the pure hopping Hamiltonian can produce the finiteness of the critical density, even at low dimensions.

We now recall that, on a $\bz^d$-periodic network, the following features are equivalent:
\begin{itemize}
\item the finiteness of the critical density;
\item the transience of the graph in the setting of the theory of the random 
walks;\footnote{More precisely, it is equivalent to the transience of random walk on the 
$\bz^d$--periodic graph under consideration whose generator is the Laplacian.} 
\item the dimension being greater than 2; 
\item {\it Bose--Einstein Condensation}, that is the existence of locally normal states (i.e. states with finite local density of particles) for which the percentage of particles occupying the ground state in the infinite volume limit is strictly positive.  In particular, such states may exhibit any particle density greater than the critical one.
\end{itemize}
Let us notice that both finite critical density and transience are related to the behavior of the Hamiltonian in a neighborhood of the bottom of the spectrum, cf. Definition \ref{criticaldensity} below, namely the BEC phenomena are deeply connected with the  spectrum of the Hamiltonian near zero. 

When considering the pure hopping Hamiltonian in \eqref{phop}, this  corresponds  to the spectral properties of the adjacency $A$, for values close to $\|A\|$.

The present paper is devoted to the investigation of the BEC, and its related features, for models whose underlying graphs are ``small'' additive perturbations of periodic graphs, which we shall call  {\it essentially periodic} graphs (cf. Definition \ref{essentiallyperiodic}), and where the Hamiltonian is the pure hopping one, based on the adjacency operator according to \eqref{phop}.

We prove that, in the infinite volume limit, the density of the eigenvalues (called 
{\it integrated density of the states}  in the physical literature) of the adjacency matrix $A_p$ of the perturbed graph is the same as that of the unperturbed graph $A$, up to a possible shift of the spectrum. This is due to the possible difference between the two norms $\|A_p\|$, $\|A\|$ which enter in the definition of the Hamiltonian \eqref{phop}. Such a possible translation of the spectrum is important because corresponds to  the appearance of the so called {\it hidden spectrum} according to the physical literature. The presence of the hidden spectrum always implies the finiteness of the critical density (see \eqref{dc}). 

The main result of the present work is the general analysis of the features related to BEC for essentially periodic networks with pure hopping Hamiltonian. First, we give the general proof of the following fact. Namely, for essentially periodic graphs, the increase of the norm $\|A_p\|$ of the perturbed adjacency is equivalent to the appearance of the hidden spectrum, which in turn implies the finiteness of the critical density. We also show the presence of hidden spectrum in many examples, including the comb graph, where such property was already noticed in \cite{BCRSV}. This implies finite critical density also for dimension $d\leq2$. Second, we show that, because of the essential difference between the adjacency and the Laplace operator due to the inhomogeneity introduced by the perturbation, all the properties described above are not equivalent, hence have to be separately analyzed and proved.

As mentioned above, in the case of periodic graphs the appearance of BEC is determined by the finiteness of the critical density. If this happens, in order to obtain a thermodynamical state with non-trivial condensate, one should choose the chemical potential $\mu_{\Lambda_n}$ for the cut-off region $\Lambda_n$ so to keep the density constant and above the critical density.

For inhomogeneous graphs with pure hopping Hamiltonian the finiteness of the critical density is neither necessary nor sufficient  for the existence of a thermodynamic limit with BEC.
Comb graphs for example have finite critical density also at low dimensions, however this does not imply the existence of locally normal thermodynamical states exhibiting BEC. This is because such finiteness is due to the presence of hidden spectrum, instead of to the integrability of the divergence at the bottom of the spectrum.

On the other hand, one can prove (cf. \cite{FGI}) that for the graph $\bn$ a thermodynamical state with BEC exists, despite of the infinite critical density.

Moreover, higher-dimensional combs admit BEC, but the threshold dimension is not given by the growth of the volume, but by the growth of the Perron-Frobenius vector. Such threshold dimension also plays a r\^ole in the choice of the sequence $\{\mu_{\Lambda_n}\}$ of the finite volume chemical potentials which gives rise to locally normal states with BEC.

Apparently, the notion which is still capable of determining the existence of BEC is that of transience of the Hamiltonian operator.

The class of graphs that we analyze in the paper consists of zero density additive perturbations of 
$\Gamma$--periodic graphs with finite quotients, $\Gamma$ being a discrete amenable group, typically $\bz^d$. We equip any such graph $X=(V,E)$ with a C$^*$-algebra of operators on $\ell^2(VX)$, containing in particular $\Gamma$--invariant operators with finite propagation, and endow the algebra with a finite trace $\tau$.
This trace, composed with the spectral projections of the Hamiltonian, produces the spectral measure at the infinite volume limit. Therefore the presence of hidden spectrum may be seen as a consequence of the non faithfulness of the GNS representation associated with the trace $\tau$.

A relevant notion in this paper is that of generalized Perron--Frobenius eigenvector, namely a (not necessarily $\ell^2$) vector with positive entries on $VX$ which is an eigenvector of the adjacency operator $A$, with eigenvalue $\|A\|$. For the pure hopping Hamiltonian indeed, this vector describes the minimal energy, hence the Bose--Einstein condensate.

Another main technical issue studied in this paper is how small perturbations of periodic graphs modify the behavior under the thermodynamical limit. As a main tool, we prove what we call the secular equation, which relates the resolvent of the adjacency operator of a perturbed graph with that of the unperturbed one. 

As a first byproduct we prove the existence of hidden spectrum, and hence of finite critical density, for a large class of examples.
Finally, we can completely analyze the thermodynamical limit for the comb graph $\bz^d
 \comb \bz$, showing that locally normal KMS states at the critical density may appear exactly when $d\geq3$.

In particular, we show that for  the low dimensional combs, it is impossible to exhibit infinite volume states describing a portion of Bose--Einstein condensate having a correct local meaning.  This happens since the adjacency matrix is  recurrent, and we can exhibit a unique KMS state  for a given inverse temperature $\b$, which is non normal w.r.t. the Fock state.  In addition, the thermodynamic limit with constant density does not describe Bose--Einstein condensate even in the transient case, i.e. for the comb graphs $\bz^d  \comb \bz$, $d\geq3$.  
Such a condensate exists only at the critical density, and is obtained with a carefully chosen asymptotics of the chemical potentials for the finite volume approximations. 

Recently, the program concerning the investigation of the spectral properties of the adjacency operator following the same lines as above, is carried out in \cite{Fidaleo} also for perturbed Cayley Trees.

To end the present introduction, we recall that the comb graphs considered here are a particular case of the graphs obtained via the comb product, and that, as studied in \cite{ABO, HO} and references therein, there is a correspondence between the notion of classical, resp. monotone, resp. Boolean, resp. free independence, and the notion of tensor, resp. comb, resp. star, resp. free, product of graphs.

\section{Geometrical Preliminaries}

 A {\it simple graph} $X=(VX,EX)$ is a collection $VX$ of objects,
 called {\it vertices}, and a collection $EX$ of unordered pairs of
 distinct vertices, called {\it edges}.  The edge $e=\set{u,v}$ is
 said to join the vertices $u,v$, while $u$ and $v$ are said to be
 {\it adjacent}, which is denoted $u\sim v$.

 Let us denote by $A=[A(v,w)]$, $v,w\in VX$, the adjacency
 matrix of $X$, that is,
 $$
 A(v,w)=|\set{v,w}|\,,\quad\set{v,w}\in EX
 $$ 
 and observe that, given $VX$, assigning $EX$ is equivalent to
 assigning $A$, that is the geometrical properties of $X$ can be
 expressed in terms of $A$.  For example, a graph is connected, namely
 any two vertices are joined by a path, is equivalent to the
 irreducibility of the matrix $A$,  the {\it degree} $\deg(v)$ of a
 vertex $v$, namely the number of vertices adjacent to $v$, counted with multiplicity, is equal
 to $(v,A^*Av)$ and, setting $d:=\sup_{v\in VX} \deg(v)$, we have
 $\sqrt{d}\leq\|A\|\leq d$, namely $A$ is bounded if and only if $X$
 has bounded degree.  We denote by $D=[D(v,w)]$ the degree matrix of
 $X$, that is,
$$
D(v,w)=
\begin{cases} 
    \deg v& v=w\\
    0&\text{otherwise.} 
\end{cases} 
$$  
The Laplacian on the graph is $\D=D-A$, so that 
$$
(\D f)(v) = \sum_{\set{v,w}\in EX}(f(w)-f(v))\,,
$$
 for any $f\in\ell^{2}(VX)$, $v\in VX$.\footnote{The definition used here implies $\D>0$, and differs from the standard one adopted in the physics literature.}

Assume now the simple graph $X$ to be countable and with bounded
 degree. In the present paper we only deal with bounded operators acting on $\ell^2(VX)$, if it is not otherwise specified.

 \begin{Dfn}\label{def:amenableGraph}
     Let $X$ be a countably infinite graph.  An increasing exhaustion
     $\set{K_n: n\in\bn}$ of finite subgraphs of $X$ is called an {\it
     amenable exhaustion} of $X$ if, setting $\cf K_{n}:= \set{v\in
     VK_{n} : d(v, VX\setminus VK_{n})=1}$, then $\displaystyle{
     \lim_{n\to\infty} \frac{|\cf K_{n}|}{|VK_{n}|} =0}$.

     $X$ is called an {\it amenable graph} if it possesses an amenable
     exhaustion.
 \end{Dfn}

We say that an operator $A$ acting on $\ell^2(VX)$ has {\it finite
 propagation} if there exists a constant $r=r(A)>0$ such that, for any
 $v\in X$, the support of $Av$ is contained in the (closed) ball
 $B(v,r)$ centered in $x$ and with radius $r$.  It is not difficult to
 show that finite propagation operators form a $^*$--algebra, and we denote
 by $\ca_{\text{FP}}(X)$ the generated C$^*$--algebra. 
 We say that a positive operator $T\in\ca_{\text{FP}}(X)$ is {\it
 essentially zero} if $\lim_n  \frac{Tr (TP_n)}{|VK_n|}=0$, where $P_n$ is the orthogonal projection onto the space generated by the vertices of $K_n$ in $\ell^2(VX)$.

 \begin{Prop}
     Essentially zero operators form the positive part of a closed
     two--sided ideal $\cai(X)$ of $\ca_{\text{FP}}(X)$.
 \end{Prop}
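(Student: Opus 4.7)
The plan is to identify $\cai(X)$ as the ``kernel'' of the sequence of normalized states $\tau_n(T):=\text{Tr}(TP_n)/|VK_n|$ on the bounded operators on $\ell^{2}(VX)$, and verify the four properties of the cone $\cai(X)_+:=\{T\geq 0 : \lim_n\tau_n(T)=0\}$ that are jointly equivalent to being the positive part of a closed two-sided ideal. Since $\tau_n(I)=1$ each $\tau_n$ is a state, hence positive, monotone on positives, and satisfies $|\tau_n(S)-\tau_n(T)|\leq\|S-T\|$. These three facts give, respectively, that $\cai(X)_+$ is closed under addition and positive scalar multiplication, is hereditary ($0\leq S\leq T\in\cai(X)_+$ implies $S\in\cai(X)_+$), and is norm-closed.

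The heart of the argument is the invariance $T\in\cai(X)_+$, $A\in\ca_{\text{FP}}(X) \Rightarrow A^*TA\in\cai(X)_+$. By norm-density of finite-propagation operators in $\ca_{\text{FP}}(X)$, together with norm-continuity of $A\mapsto A^*TA$ and norm-closedness of $\cai(X)_+$, it suffices to treat $A$ of finite propagation $r$. Let $P_n^{(r)}$ denote the projection onto the span of the set of vertices within distance $r$ of $VK_n$. Since $A^*$ also has propagation $r$ one has $P_n A^*=P_n A^*P_n^{(r)}$, whence for every $\xi$,
\[
(AP_nA^*\xi,\xi)=\|P_nA^*\xi\|^{2}=\|P_nA^*P_n^{(r)}\xi\|^{2}\leq \|A\|^{2}(P_n^{(r)}\xi,\xi),
\]
i.e.\ $AP_nA^*\leq \|A\|^{2}P_n^{(r)}$. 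Using cyclicity of the trace and $T\geq 0$,
\[
\tau_n(A^*TA)=\frac{\text{Tr}(TAP_nA^*)}{|VK_n|}\leq \|A\|^{2}\,\frac{\text{Tr}(TP_n^{(r)})}{|VK_n|}.
\]
Amenability enters through the iteration of the width-$1$ boundary condition: bounded degree $d$ gives $|VK_n^{(r)}\setminus VK_n|\leq(d+d^{2}+\cdots+d^{r})|\cf K_n|=o(|VK_n|)$, so $\text{Tr}(T(P_n^{(r)}-P_n))/|VK_n|\leq\|T\|\,o(1)\to 0$, and since $T\in\cai(X)_+$ I conclude $\tau_n(A^*TA)\to 0$.

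I then set $\cai(X):=\{A\in\ca_{\text{FP}}(X) : A^*A\in\cai(X)_+\}$. Right-ideal stability follows from $(AB)^*(AB)=B^*(A^*A)B\in\cai(X)_+$ by the invariance just proved; left-ideal stability from $(BA)^*(BA)\leq\|B\|^{2}A^*A$ combined with the hereditary property; closure under sums from $(A+B)^*(A+B)\leq 2(A^*A+B^*B)$; and norm-closure from norm-continuity of $A\mapsto A^*A$ together with norm-closure of $\cai(X)_+$. To see that the positive part of $\cai(X)$ equals $\cai(X)_+$, the forward inclusion is trivial since $T\in\cai(X)_+$ implies $T^{2}\leq\|T\|T\in\cai(X)_+$; the reverse uses the Cauchy--Schwarz inequality for the trace applied to $P_n$ and $TP_n$, namely
\[
\text{Tr}(TP_n)=\text{Tr}(P_n\cdot TP_n)\leq\text{Tr}(P_n)^{1/2}\text{Tr}(P_n T^{2} P_n)^{1/2}=|VK_n|^{1/2}\,\text{Tr}(T^{2} P_n)^{1/2},
\]
which rearranges as $\tau_n(T)\leq\sqrt{\tau_n(T^{2})}$.

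The main obstacle is the invariance step: it is the only place where the C$^{*}$-structure (finite propagation) and the graph-theoretic structure (amenability) of $\ca_{\text{FP}}(X)$ interact, and it requires translating the F{\o}lner-type density-zero condition from width one to arbitrary width-$r$ collars. The Cauchy--Schwarz identification of the positive part is the only other non-automatic ingredient; all remaining verifications are standard C$^{*}$-algebraic manipulations with hereditary cones.
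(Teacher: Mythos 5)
Your proof is correct and follows essentially the same route as the paper: the decisive step in both is the reduction to finite-propagation operators, the collar identity $P_nA^*=P_nA^*P_n^{(r)}$, and the amenability estimate $|VK_n^{(r)}\setminus VK_n|\leq (d+\cdots+d^r)|\cf K_n|=o(|VK_n|)$. The only difference is that you spell out explicitly, via the definition $\cai(X)=\{A:A^*A\in\cai(X)_+\}$ and the Cauchy--Schwarz identification of the positive part, the standard passage from a closed, hereditary, conjugation-invariant cone to the positive part of a closed two-sided ideal, which the paper declares clear and leaves to general C$^*$-theory.
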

 \begin{proof}
     They clearly form a hereditary closed cone. We have to show that
     such cone is unitary invariant.  Indeed, if $B$ has finite
     propagation $r$, $P_n B = P_n B P_n(r)$, where $P_n(r)$ denotes
     the projection on the space generated by the vertices in
     $\cup_{v\in VK_n}B(v,r)$.  Therefore
     \begin{align*}
	 Tr \bigl( B^*TBP_n \bigr) &= Tr \bigl( B^*P_n(r)TP_n(r)BP_n\bigr) \leq \|B\|^2 Tr
	 \bigl( P_n(r)TP_n(r) \bigr)\\
	 &\leq \|B\|^2 Tr \bigl( P_nTP_n \bigr) + 2\|B\|^2 \|T\| Tr \bigl( P_n(r)-P_n \bigr).
     \end{align*}
     Moreover, $Tr \bigl( P_n(r)-P_n \bigr)$ can be estimated by the cardinality of
     $\cup_{v\in \cf K_n}B(v,r)$, hence by $|\cf K_n| d^{r+1}$. 
     Regularity of the exhaustion implies $\lim_n  \frac{Tr
     (B^*TBP_n)}{|VK_n|}=0$, namely $B^*TB$ is essentially zero.  Since
     the cone is closed, we get the invariance for unitaries in
     $\ca_{\text{FP}}(X)$.
\end{proof}

 Our first class of amenable graphs is given by the periodic ones. Let $\G$ be a countable discrete subgroup of
 automorphisms of $X$ acting freely on $X$ ($i.e.$ any $\g\in\G$,
 $\g\neq id$ doesn't have fixed points), and with finite quotient
 $B:=X/\G$.  Denote by $F\subset VX$ a set of representatives for
 $VX/\G$, the vertices of the quotient graph $B$. $F$ is called a {\it fundamental domain} for the periodic network $X$.

Let us define a unitary representation of $\G$ on $\ell^{2}(VX)$ by
 $(\l(\g)f)(x):= f(\g^{-1}x)$, for $\g\in\G$, $f\in\ell^{2}(VX)$,
 $x\in V(X)$.  Then the von Neumann algebra $\cn(X,\G):= \{ \l(\g) :
 \g\in\G\}'$, of bounded operators on $\ell^{2}(VX)$ commuting with the
 action of $\G$, inherits a trace given by $Tr_{\G}(T) = \sum_{x\in F}
 T(x,x)$, for $T\in\cn(X,\G)$. Clearly $A,D$ and $\D$ belong to $\cn(X,\G)$. The following theorem is known, see e.g. \cite{GILa03}, Theorem 6.2 for a proof.

 \begin{Thm}\label{thm:amenable}
     Let $X$ be a connected, countably infinite graph, $\G$ be a
     countable discrete amenable subgroup of automorphisms of $X$
     which acts on $X$ freely and cofinitely.  Then $X$ is an amenable
     graph, and $K_n$ can be chosen in such a way that, for a suitable
     choice of a sequence $E_n\subset \G$, $V(K_n\setminus\cf
     K_{n})\subseteq E_nF\subseteq V(K_n)$, namely $K_n$ is the finite
     union of copies of $F$ up to $\cf K_{n}$.
 \end{Thm}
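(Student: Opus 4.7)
The plan is to transfer a Følner sequence for the amenable group $\G$ into a graph-theoretic amenable exhaustion of $X$, exploiting the freeness of the action (which together with cofiniteness forces $X$ to have bounded degree). First I would set up the combinatorial dictionary between $X$ and $\G$: freeness gives a bijection $\G\times F\to VX$, $(\g,f)\mapsto \g f$, and by $\G$-invariance of the edge relation, two vertices $\g f$ and $\g' f'$ are adjacent in $X$ precisely when $f$ is adjacent to $(\g^{-1}\g')f'$. Since $F$ is finite and each vertex has finite degree, only finitely many $s\in\G$ can realize an adjacency of the form $f\sim s f'$ with $f,f'\in F$; let $S\subset\G$ denote the (finite) symmetric subset consisting of all such $s$, together with their inverses and the identity.

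Next, I would invoke amenability of $\G$: pick an increasing Følner sequence $\{E_n\}\subset\G$ relative to $S$, so that $\bigcup_n E_n=\G$ and, for every $s\in S$, $|E_n\triangle E_n s|/|E_n|\to 0$. Define $K_n$ to be the subgraph of $X$ induced on $VK_n:=E_nF$. This produces an increasing finite exhaustion of $X$, and the inclusions $V(K_n\setminus\cf K_n)\subseteq E_nF\subseteq V(K_n)$ hold trivially from the construction (with equality on the right).

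It then remains to show that $|\cf K_n|/|VK_n|\to 0$. A vertex $v=\g f\in VK_n$ lies in $\cf K_n$ only if some neighbor $\g' f'$ of $v$ is not in $E_nF$, which by the dictionary above forces $\g'=\g s$ with $s\in S$ and $\g s\notin E_n$. Thus such a $\g$ lies in the inner $S$-boundary $\partial^{i}_S E_n:=\{\g\in E_n:\g S\not\subseteq E_n\}$, which satisfies $|\partial^{i}_S E_n|\le\sum_{s\in S}|E_n\setminus E_n s^{-1}|=o(|E_n|)$ by the Følner condition. Hence $|\cf K_n|\le |F|\cdot|\partial^{i}_S E_n|=o(|F|\cdot|E_n|)=o(|VK_n|)$, giving the required ratio.

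The (minor) obstacle is the careful translation between the group-theoretic Følner condition for the finite set $S\subset\G$ and the graph-theoretic boundary $\cf K_n$ inside $X$: one must use both the finiteness of the fundamental domain $F$ and the bounded degree of $X$ to ensure that all adjacencies across $\G$-translates of $F$ are captured by multiplication by elements of a single finite $S$. Once this dictionary is in place, the two required inclusions are automatic and the boundary estimate reduces to the standard Følner computation in $\G$.
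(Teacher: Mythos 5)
Your argument is correct: the dictionary between adjacencies across translates of the finite fundamental domain $F$ and a finite symmetric set $S\subset\G$, followed by the standard F\o lner estimate $|\{\g\in E_n:\g S\not\subseteq E_n\}|=o(|E_n|)$, does yield $|\cf K_n|/|VK_n|\to 0$ together with the required inclusions (here with $VK_n=E_nF$ exactly). The paper itself does not prove this theorem but defers to \cite{GILa03}, and your construction is essentially the same F\o lner-sequence argument used there.
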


 Let us now extend the
 previously defined trace.

 We first notice that, denoting by $P_n$ the projection on
 $\ell^2(VK_n)$, $Tr_\G(T)=\lim_n \frac{Tr (TP_n)}{|VK_n|}$, for any
 $T\in \cn(X,\G)$.  Indeed, by the properties above, the difference
 between the generic term of the sequence and  $Tr_\G(T)$ is
 infinitesimal.

Next Corollary immediately follows, cf. \cite{Ped}, Corollary 1.5.8.

\begin{Cor}\label{A(X)}
    Let us denote by $\ca(X)$ the space
    $\big(\cn(X,\G)\cap\ca_{\text{FP}}(X)\big)+ \cai(X)$.  Then, $\ca(X)$ is a
    C$^*$-algebra to which the trace $Tr_\G$ naturally extends.
\end{Cor}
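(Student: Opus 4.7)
I would approach the proof in two parts. First, for the C*-algebra structure, I would work inside the ambient C*-algebra $\ca_{\text{FP}}(X)$: the intersection $\cn(X,\G)\cap\ca_{\text{FP}}(X)$ is a C*-subalgebra because $\cn(X,\G)$ is a von Neumann algebra (hence norm- and $*$-closed), and the preceding Proposition guarantees that $\cai(X)$ is a closed two-sided ideal of $\ca_{\text{FP}}(X)$. The cited Pedersen result---that the sum of a C*-subalgebra $B$ and a closed two-sided ideal $J$ of a C*-algebra $C$ is itself a C*-subalgebra, since $(B+J)/J \simeq B/(B\cap J)$ is a C*-algebra---then yields $\ca(X)$ as a C*-subalgebra of $\ca_{\text{FP}}(X)$ at once.

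Second, to extend the trace, my plan is to use the limit formula observed just before the corollary: for $T\in\cn(X,\G)$, $Tr_\G(T) = \lim_n |VK_n|^{-1}Tr(TP_n)$. The same limit vanishes on positive essentially zero operators by definition, and since such operators form the positive part of $\cai(X)$, the limit vanishes on all of $\cai(X)$ by linearity. I would therefore simply define
$$ \tau(A) := \lim_n \frac{Tr(AP_n)}{|VK_n|}, \qquad A\in\ca(X), $$
which by construction extends $Tr_\G$ on the von Neumann part and vanishes on $\cai(X)$. Equivalently $\tau(T+S) = Tr_\G(T)$ for any decomposition with $T \in \cn(X,\G)\cap\ca_{\text{FP}}(X)$ and $S\in\cai(X)$; well-definedness is automatic because elements of $(\cn(X,\G)\cap\ca_{\text{FP}}(X))\cap\cai(X)$ have $Tr_\G$ given by the same vanishing limit.

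Verifying the trace axioms is then routine. Positivity and linearity are immediate from the limit definition. For the trace identity $\tau(AB) = \tau(BA)$, expand $A = T_1+S_1$, $B = T_2+S_2$; the cross-products $T_1S_2$, $S_1T_2$, $S_1S_2$ all lie in the ideal $\cai(X)$ and contribute nothing, so the identity reduces to $Tr_\G(T_1T_2) = Tr_\G(T_2T_1)$, which is the trace property already in force on the von Neumann algebra. I do not foresee any substantive obstacle: the content of the corollary is essentially the compatibility between Pedersen's purely algebraic decomposition and the limit formula characterizing $Tr_\G$.
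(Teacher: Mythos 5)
Your proof is correct and takes the same route as the paper, which disposes of the statement by citing Pedersen's Corollary 1.5.8 (the sum of a C$^*$-subalgebra and a closed two-sided ideal is again a C$^*$-subalgebra, via $(B+J)/J\simeq B/(B\cap J)$) and regards the trace extension as immediate; you have simply spelled out the details left implicit. Your observation that the limit formula annihilates all of $\cai(X)$ — because the decomposition of an ideal element into positive parts stays inside the ideal, whose positive part consists exactly of the essentially zero operators — is precisely what makes the extension well defined and tracial.
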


\begin{Rem}
    All finite rank operators, and hence all compact operators, are
    essentially zero.  The trace $Tr_\G$ is faithful on $\cn(X,\G)$,
    but it is not faithful on $\ca(X)$, in particular it vanishes on
    compact operators.
\end{Rem} 

Now we discuss small perturbations of amenable periodic graphs.  If
$Y$ is a finite perturbation of an amenable periodic graph $X$, namely
they only differ for a finite number of points and edges, we can
consider both of them as subgraphs of a third graph $Z$, $X$ and $Y$
being obtained by removing finitely many vertices and finitely many
edges from $Z$.  It is not difficult to see that the exhaustion $K_n$ of $X$ can
be finitely perturbed to a amenable exhaustion $K'_n$ of $Z$, and that
$\cn(X,\G)$ is a (possibly non
unital) subalgebra of $B(\ell^2(VZ))$.

Reasoning as before, we can consider the unital $C^*$--algebra
$\ca(Z)=\big(\cn(X,\G)\cap\ca_{\text{FP}}(Z)\big)+\cai(Z)$, to which the trace
$Tr_\G$ naturally extends.  Since the adjacency operators $A_X$ and
$A_Y$ only differ for a finite rank operator, $A_Y\in\ca(Z)$, and $Tr_\G
(A^k_X)=Tr_\G (A^k_Y)$.  More generally, for any continuous function $\f$ on
$\br$, $Tr_\G \bigl(\f(A_X)\bigr)=Tr_\G \bigl(\f( A_Y)\bigr)$.
 This kind of invariance extends to a more general family of small
perturbations, which we call density zero perturbations.  

For the sake of simplicity, the result below concerns (possibly infinite) perturbations involving only edges. Of course, further finite perturbations can be treated as explained above. The
general case of density zero perturbations, studied in \cite{FGI}, can be recovered following the same lines.

\begin{Dfn}\label{essentiallyperiodic}
    Let $X$ be an amenable periodic graph, with exhaustion $K_n$, and
    consider a graph $Y$ such that $VX=VY$, so that $A_X$ and $A_Y$
    both act on the same Hilbert space $\ell^2(VX)$.  We say that $Y$
    is a {\it density zero} perturbation of $X$ if $A_X-A_Y$ is
    essentially zero.  In this case, $Y$ is also said to be an {\it
    essentially periodic} graph.
\end{Dfn}

\begin{Prop}\label{density0}
    Let $X$ be an amenable periodic graph, with exhaustion $K_n$, and
    let $Y$ be a graph with the same vertices as $X$.  Then $Y$ is a
    {\it density zero} perturbation of $X$ if and only if
    $$
    \lim_n\frac{|\set{u,v}\in EX\triangle EY\mid u\in VK_n\}|}{|VK_n|}=0
    $$
    where $EX\triangle EY$ denotes the symmetric difference. In this
    case, for any continuous function $\f$ on $\br$,
    \begin{equation}\label{invariance}
	Tr_\G \bigl( \f(A_X) \bigr) = Tr_\G \bigl( \f( A_Y) \bigr).
    \end{equation}
\end{Prop}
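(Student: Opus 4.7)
The plan is to compute $Tr((A_X-A_Y)^2P_n)$ explicitly in terms of the edge counts, deduce the equivalence, and then exploit the closed two-sided ideal structure of $\cai(Z)$ to pass from operators to their continuous functional calculi.

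Set $B:=A_X-A_Y$. This is self-adjoint with matrix entries in $\{-1,0,1\}$, and $B(v,w)\neq 0$ precisely when $\{v,w\}\in EX\triangle EY$. Hence $(B^2)(v,v)=\sum_w B(v,w)^2$ equals the number of edges of $EX\triangle EY$ incident to $v$, so
\begin{equation*}
Tr(B^2P_n)=\sum_{v\in VK_n}\#\{w:\{v,w\}\in EX\triangle EY\}=2\,|(EX\triangle EY)\cap EK_n|+\eps_n,
\end{equation*}
where $\eps_n$ counts those edges of $EX\triangle EY$ with exactly one endpoint in $VK_n$. Such an endpoint must lie in $\cf K_n$, so the bounded degree of $X$ (and of $Y$, inherited from $X$ via the perturbation) yields $\eps_n\le 2d|\cf K_n|$, and amenability forces $\eps_n/|VK_n|\to 0$. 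Therefore $Tr(B^2P_n)/|VK_n|\to 0$ iff $|(EX\triangle EY)\cap EK_n|/|VK_n|\to 0$. Since for a self-adjoint element $B$ of a $C^*$-algebra one has $B\in\cai(Z)$ iff $B^2\in\cai(Z)$ (a standard functional calculus argument), and since $B^2\ge 0$ makes the latter equivalent to essential zeroness, the first assertion follows.

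For the trace invariance, the telescoping identity
\begin{equation*}
A_Y^k-A_X^k=\sum_{i=0}^{k-1}A_X^{\,i}(A_Y-A_X)A_Y^{k-1-i}
\end{equation*}
places $A_Y^k-A_X^k$ in $\cai(Z)$ for every $k\ge 0$ by the two-sided ideal property, hence $p(A_Y)-p(A_X)\in\cai(Z)$ for every polynomial $p$. Since the extended $Tr_\G$ on $\ca(Z)$ annihilates $\cai(Z)$ by construction, one has $Tr_\G(p(A_X))=Tr_\G(p(A_Y))$. For a general continuous $\f$ pick polynomials $p_m\to\f$ uniformly on a compact interval containing the spectra of $A_X$ and $A_Y$; then $p_m(A_Y)-p_m(A_X)\to\f(A_Y)-\f(A_X)$ in operator norm, and the closedness of $\cai(Z)$ deposits $\f(A_Y)-\f(A_X)$ in the ideal. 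Applying $Tr_\G$ yields (\ref{invariance}).

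The main obstacle is essentially bookkeeping: verifying that all the operators in sight really live inside $\ca(Z)$, and that the extended trace indeed kills $\cai(Z)$ (which follows from the well-definedness of the extension in Corollary \ref{A(X)}). The combinatorial edge count and the polynomial approximation are both routine.
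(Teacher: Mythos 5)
Your proof is correct and follows essentially the same route as the paper's: the same computation of $Tr\bigl((A_X-A_Y)^2P_n\bigr)$ as an incidence count over $EX\triangle EY$, and the same ideal argument for \eqref{invariance} (the paper writes $(A_X+T)^n-A_X^n\in\cai$, which is your telescoping identity in binomial form, followed by the same Weierstrass approximation). One caveat on the only point where you are more explicit than the paper: the justification of $\eps_n\le 2d|\cf K_n|$ via ``such an endpoint must lie in $\cf K_n$'' is only valid for edges of $EX$, since $\cf K_n$ is defined through the graph metric of $X$; an edge of $EY\setminus EX$ crossing out of $VK_n$ can have its inner endpoint deep inside $K_n$, so $\eps_n$ need not be controlled by $|\cf K_n|$ (one can arrange $\eps_n\sim|VK_n|$ with bounded degree). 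This is really an ambiguity in what $(EX\triangle EY)\cap EK_n$ means (induced edges versus edges meeting $VK_n$), and the paper's own two-sided inequality elides the same boundary term, so your argument is no less rigorous than the original; but as written that one estimate does not hold.
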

\begin{proof}
    Clearly $A_X-A_Y$ is essentially zero iff $\displaystyle \lim_n  \frac{Tr \bigl(
    (A_X-A_Y)^2P_n \bigr)}{|VK_n|}=0$.  A simple calculation shows that $\langle v,
    (A_X-A_Y)^2v \rangle = |\{e\in EX\triangle EY : v\in e\}|$. As a
    consequence, since edges for which both vertices are in $K_n$
    should be counted twice,
    $$
    |(EX\triangle EY)\cap EK_n|\leq Tr \bigl( (A_X-A_Y)^2P_n \bigr) \leq 2
    |(EX\triangle EY)\cap EK_n|.
    $$
    The thesis follows.  Concerning the last equality, setting
    $T=A_Y-A_X\in\cai(X)$, we have
    $A_Y^n-A_X^n=(A_X+T)^n-A_X^n\in\cai(X)$, namely (\ref{invariance})
    holds for $\f(t)=t^n$.  So the claim is true for any polynomial,
    and then, using Weierstrass density theorem, for any continuous
    function.
\end{proof}

\begin{Rem}
We note that for an essentially periodic graph $X$ the $C^*$-algebra $\ca(X)$, and the trace on it,  depend in principle on the exhaustion. However, previous Proposition implies that on geometric operators, such as the adjacency $A$ and its continuous functional calculi, the value of the trace is uniquely determined.
\end{Rem}

\section{Statistical mechanics on amenable graphs}

The main aim of the present paper is to investigate in full generality the thermodynamics of free Bosons (Baarden--Cooper pairs) on inhomogeneous networks with pure hopping Hamiltonian (i.e. the opposite of the adjacency matrix on the graph). Thus for the convenience of the reader, we report some standard notions useful in the sequel.

Let $(\ga,\a)$ be a dynamical system consisting of a (noncommutative) $C^*$--algebra and a one parameter group of $*$--automorphism $\a$. The state $\om\in\cs(\ga)$ 
satisfies the {\it KMS boundary condition} at inverse temperature $\b$, which we suppose to be 
always different from zero, if
\begin{itemize}
\item[(i)] $t\mapsto\om(A\a_{t}(B))$ is a continuous function for
every $A,B\in\ga$,
\item[(ii)]
$\int\om(A\a_{t}(B))f(t)dt=\int\om(\a_{t}(B)A)f(t+i\b)dt$
whenever $f\in\widehat{\cd}$, where `` $\widehat{}$ '' stands for the Fourier transform.
\end{itemize}
Here, $\cd$ is the space  of smooth compactly supported functions on $\br$.

The $C^*$--algebras considered here are those arising from the Canonical Commutation Relations (CCR for short). Namely, let $\gph$ be a pre-Hilbert space and consider the following (formal) relations between the annihilators $a(f)$, and creators $a^+(g)$, $f,g\in\gph$
\begin{equation}
\label{cccrr}
a(f)a^+(g)-a^+(g)a(f)=\langle f,g\rangle\,.
\end{equation}
It is well--known that the relations \eqref{cccrr} cannot be realized by bounded operators. A standard way to realize \eqref{cccrr} is to look at the symmetric Fock space $\cf_+(\bar\gph)$ on which the annihilators and creators naturally act as unbounded closed (mutually adjoint) operators. This concrete representation of the CCR is called {\it the Fock representation}. 

An equivalent description for the CCR is to put $\Phi(f):=\overline{a(f)+a^+(f)}/{\sqrt{2}}$, and define the {\it Weyl operators}
$W(f):=\exp{i\Phi(f)}$. The Weyl operators are unitary and satisfy the rule
\begin{equation*}
W(f)W(g)=e^{-i\frac{Im(f,g)}{2}}W(f+g)\,,\quad f,g\in\gph\,.
\end{equation*}
The CCR algebra $\ccr(\gph)$ is precisely the $C^*$--algebra generated by $\{W(f)\}_{f\in\gph}$.

Let $H$ be a positive operator acting on $\bar\gph$, and suppose that $e^{itH}\gph\subset\gph$. Then  
the one--parameter
group of Bogoliubov automorphisms $T_tf:=e^{itH}f$ defines a one--parameter group of $*$--automorphisms $\a_t$ of $\ccr(\gph)$ by putting $\a_t(W(f)):=W(e^{itH}f)$.

A representation $\pi$ of the CCR algebra $\ccr(\gph)$ is {\it regular} if
the unitary group $t\in\br\mapsto\pi(W(tf))$ is continuous in the
strong operator topology, for any $f\in \gph$.  A state $\f$ on $\ccr(\gph)$
is regular if the associated GNS representation is regular.  It simply
means that the functions $\{\f(W(tf))\}$ are continuous, for any $f\in
\gph$.
The {\it quasi--free} states of CCR algebras are of interest for our purposes. They are analytic states 
$\om$ uniquely determined by the two--point functions $\om(a^+(f)a(g))$, $f,g\in\gph$.

Let $X$ be an infinite graph, $\gph$ a subspace of $\ell^2(X)$, which contains the indicator functions of all finite
subregions $\La$ of the graph $X$.  A
representation $\pi$ of the CCR algebra $\ccr(\gph)$ is said to be {\it
locally normal (w.r.t. the Fock representation)} if
$\pi\lceil_{\ccr(\ell^2(\La))}$ is quasi--equivalent to the Fock
representation of $\ccr(\ell^2(\La))$.  A state on $\ccr(\gph)$ is locally
normal if the associated GNS representation is.  A locally normal
state $\f$ does have finite local density
$$
\r_{\La}(\f):=\frac{1}{|\La|}
\sum_{j\in\La}\f(a^{+}(\d_{j})a(\d_{j}))
$$
even if the mean density  might be infinite (e.g. if
$\lim_{\La\uparrow X}\r_\La(\f)=+\infty$). 

\begin{Lemma} \label{add1}
    If $\f$ is locally normal, then $\f\lceil_{\ccr(\ell^2(\La))}$ is regular
    for any finite subregion $\La$.
\end{Lemma}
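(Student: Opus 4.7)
The plan is to derive the regularity of $\f\lceil_{\ccr(\ell^2(\La))}$ from the well-known regularity of the Fock representation by transporting it across the quasi-equivalence that local normality provides.

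First, I recall that the Fock representation $\pi_F$ of $\ccr(\ell^2(\La))$ is itself regular. Indeed, for each $f\in\ell^2(\La)$ the field operator $\Phi(f)=\overline{a(f)+a^+(f)}/\sqrt{2}$ is essentially self-adjoint on the finite-particle subspace of $\cf_+(\ell^2(\La))$, and $\pi_F(W(tf))=e^{it\Phi(f)}$, so Stone's theorem gives strong-operator continuity of $t\mapsto\pi_F(W(tf))$.

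Second, using the hypothesis of local normality, the restriction $\pi_\f\lceil_{\ccr(\ell^2(\La))}$ is quasi-equivalent to $\pi_F$. This provides a normal $*$-isomorphism $\sigma:\pi_F(\ccr(\ell^2(\La)))''\to\pi_\f(\ccr(\ell^2(\La)))''$ with $\sigma\circ\pi_F=\pi_\f$ on $\ccr(\ell^2(\La))$. Since $\sigma$ is normal it is ultraweakly continuous; on the unit ball (where the Weyl operators live) the ultraweak topology coincides with the weak operator topology, so the weak continuity of $t\mapsto\pi_F(W(tf))$ is transported to weak continuity of $t\mapsto\pi_\f(W(tf))=\sigma(\pi_F(W(tf)))$. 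For unitary operator-valued maps, weak operator continuity is equivalent to strong operator continuity, so $t\mapsto\pi_\f(W(tf))$ is strongly continuous.

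Finally, writing $\f(W(tf))=\langle\Omega_\f,\pi_\f(W(tf))\Omega_\f\rangle$ where $\Omega_\f$ is the GNS vector, continuity of $t\mapsto\f(W(tf))$ is immediate for every $f\in\ell^2(\La)$, which is the stated regularity. The only non-routine point is checking that the normality of $\sigma$ really does preserve continuity of the unitary group, but this is a direct consequence of ultraweak continuity together with the unit-ball coincidence of ultraweak and weak operator topologies; I do not expect this to be a genuine obstacle.
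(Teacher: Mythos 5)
Your proof is correct and follows essentially the same route as the paper: both arguments rest on the regularity of the Fock representation and transport it through the quasi-equivalence furnished by local normality. The only difference is technical — the paper invokes the characterization of quasi-equivalence as unitary equivalence up to multiplicity (BR I, Thm.\ 2.4.26), which makes the transfer of strong continuity immediate, whereas you use the normal $*$-isomorphism characterization together with the coincidence of the ultraweak and weak operator topologies on the unit ball and the fact that weak and strong continuity agree for unitary-valued maps; both are sound.
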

\begin{proof}
    Since $\pi_\f\lceil_{\ccr(\ell^2(\La))}$ is quasi equivalent to the
    Fock representation of $\ccr(\ell^2(\La))$, they are unitary
    equivalent up to multiplicity (cf.  thm 2.4.26 \cite{BR1}).  The
    result follows since the Fock representation is regular.
\end{proof}

We now specialize  to the following situation.  Let $\La_{n}\uparrow X$
be a sequence of finite regions invading the graph $X$, together with
a sequence of states $\{\om_{\La_{n}}\}$ on $\ccr(\ell^2(\La_n))$ such that the following limit
$$
\lim_{n}\om_{\La_{n}}(a^{+}(v)a(v))=:q(v)
$$
exists (possibly $+\infty$) for each $v\in \gph$.

\begin{Lemma} \label{add2}
Suppose that    
\begin{equation*} 
\lim_{n}\om_{\La_{n}}(a^{+}(\d_{j})a(\d_{j}))=+\infty
    \end{equation*}
    for some $j\in X$. Then $\om(W(v)):=\lim_n\om_{\La_{n}}(W(v))$ does not define any locally normal state on $\ccr(\gph)$, where $\gph\subset\ell^2(X)$ is any subspace containing the finite supported sequences.
\end{Lemma}
\begin{proof}
    Let $j$ be contained in the finite region $\La$.  By Lemma
    \ref{add1}, it is enough to show that $\om\lceil_{\ga_{\La}}$ is
    not regular.  We have (cf. \cite{BR2}, Example 5.2.18)
  $$
	\om(W(\l\d_{j})) =\lim_{n}\om_{\La_{n}}(W(\l\d_{j}))
		= e^{-\frac14\l^2} \lim_n
	\exp\Bigl({-\frac{\l^2}2\om_{\La_{n}}(a^+(\d_j)a(\d_j))}\Bigr)=0\,.
   $$
    The thesis follows as $\om$ cannot be regular.
\end{proof}

For equivalent characterizations of the KMS boundary
condition and general results on the CCR the reader is referred to \cite{BR2} and the references cited therein.

 In this work, we consider amenable graphs (also called amenable
 networks) which are essentially-periodic, namely finite or density--zero
 perturbations of periodic graphs as described above.  We assume a
 regular exhaustion $\{\La_{n}\}$ is given, and denote by $\t$ the
 canonical trace on $\ca(X)$.  We also denote with an abuse of notation, $\ell^2(X)\equiv\ell^2(VX)$ for the network $X$. In this section we shall introduce the
 main thermodynamic properties and quantities.

 Fix a positive operator $H\in\ca(X)$ (the Hamiltonian) and denote
 by $N_{H}$ its {\it integrated density of states},  (see e.g. \cite{PF}) that is  $N_{H}(\l):=
 \t(E[0,\l])$, where $H=\int \l dE(\l)$ is the spectral decomposition
 of $H$.
 
 Let $\{H_{\La_{n}}\}$ be its finite volume truncation w.r.t. the
 exhaustion $\{\La_{n}\}$, $i.e.$ $H_{\La_n} := P_nHP_n$, and denote by $N_{H_{\La_n}}(\l) := \frac{1}{|\La_n|} Tr \bigl( E_{H_{\La_n}}[0,\l] \bigr) \equiv \frac{1}{|\La_n|} |\set{ t\in \s(H_{\La_n}) : t\leq \l}|$.  Define
 \begin{align} \label{a1}
    E_{0}(H):=&\lim_{\La_{n}\uparrow X}
    \bigg(\inf\supp\big(N_{H_{\La_{n}}}\big)\bigg)\,\\
     E_{m}(H):=&\inf\supp\bigg(\lim_{\La_{n}\uparrow X}
    N_{H_{\La_{n}}}\bigg) \equiv\inf\supp\big(N_{H}\big)\,.\nn
\end{align}
 
 \begin{Rem}
\itm{i} The limit in \eqref{a1} exists as a consequence of Lemma \ref{Lem:IncrSpectrum}.

\itm{ii} Because of Proposition \ref{traceconv}, we have $\lim_{\La_n \uparrow X} N_{H_{\La_n}} = N_H$, as distribution functions.

\itm{iii} If $A$ is the adjacency matrix of $X$, and $H:= \norm{A}-A$, then $E_{0}(H) =\lim_{\La_{n}\uparrow X}
     \norm{A} - \norm{A_{\La_{n}}} = 0$, while $E_m(H) = \norm{A} - \norm{\pi_\tau(A)}$, where $\pi_\tau$ is the GNS representation induced by $\tau$. Moreover, if $X$ is a periodic graph, then it is shown in Theorem \ref{NoHiddenForPeriodic} that $E_m(H)=0$.
\end{Rem}
 
 Obviously, $E_{0}(H)\leq E_{m}(H)$.  If $E_{0}(H)< E_{m}(H)$ we say
 that there is a {\it (low energy) hidden spectrum}, see e.g.
 \cite{BCRSV}. For the infinite graphs below, we shall always assume $E_{0}(H)=0$.

 \begin{Lemma} \label{Lem:IncrSpectrum}
     Let $X$ be a countable graph, $H\in\cb(\ell^{2}(X))$ a positive 
     operator, and let $\La_{1}\subset \La_{2}$ be finite subgraphs of 
     $X$. Then 
     $$
     \min \s(H_{\La_{1}}) \geq \min \s(H_{\La_{2}}).
     $$
 \end{Lemma}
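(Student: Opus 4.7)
The plan is to recognize this as a direct application of the Rayleigh--Ritz variational principle. For a positive bounded operator, the minimum of the spectrum of a compression $H_{\Lambda} = P_\Lambda H P_\Lambda$ (viewed as an operator on the finite-dimensional Hilbert space $\ell^2(\Lambda)$) admits the variational characterization
\[
\min \sigma(H_{\Lambda}) \;=\; \inf \bigl\{ \langle f, H_\Lambda f\rangle : f\in\ell^2(\Lambda),\ \|f\|=1\bigr\}\;=\;\inf\bigl\{ \langle f, H f\rangle : f\in\ell^2(\Lambda),\ \|f\|=1\bigr\},
\]
the second equality holding because $P_\Lambda$ acts as the identity on $\ell^2(\Lambda)$.

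Next, I would exploit the natural isometric inclusion $\ell^2(\Lambda_1) \hookrightarrow \ell^2(\Lambda_2)$ obtained by extending a function $f\in\ell^2(\Lambda_1)$ by zero on $V\Lambda_2\setminus V\Lambda_1$; this extension preserves both the norm of $f$ and the value of the quadratic form $\langle f,Hf\rangle$, since $H$ is computed in the ambient space $\ell^2(X)$ and does not depend on how $f$ is viewed. Consequently, the set of unit vectors in $\ell^2(\Lambda_1)$ injects into the set of unit vectors in $\ell^2(\Lambda_2)$ without altering the numerical values of $\langle f, H f\rangle$.

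Taking infima, the infimum over the larger set $\ell^2(\Lambda_2)$ is no larger than the infimum over the smaller set $\ell^2(\Lambda_1)$, which yields precisely
\[
\min\sigma(H_{\Lambda_2}) \;\leq\; \min\sigma(H_{\Lambda_1}),
\]
as desired.

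There is no real obstacle: the only point that needs care is the distinction between viewing $H_{\Lambda}$ as an operator on $\ell^2(\Lambda)$ (where its spectrum is defined) versus as an operator $P_\Lambda H P_\Lambda$ on $\ell^2(X)$ (where $0$ would automatically belong to the spectrum via the orthogonal complement). The variational formula above makes this identification explicit and reduces the claim to the trivial monotonicity of $\inf$ under enlargement of the domain.
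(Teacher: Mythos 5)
Your proof is correct and is essentially the same argument as the paper's: both rest on the variational (Rayleigh--Ritz) characterization of $\min\s(H_\La)$ together with the isometric inclusion $\ell^2(\La_1)\hookrightarrow\ell^2(\La_2)$, the only cosmetic difference being that the paper works with the explicit minimizing eigenvector of $H_{\La_1}$ and bounds its Rayleigh quotient for $H_{\La_2}$ below by $\min\conv\s(H_{\La_2})$, whereas you invoke the variational formula on both sides and use monotonicity of the infimum.
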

 \begin{proof}
     Let $\l$ be the minimum eigenvalue of $H_{\La_{1}}$, and $v$ the
     relative normalised eigenvector.  Since $v\in \ell^{2}(\La_{1})$,
     we have $\langle v, H_{\La_{2}} v \rangle  = \langle v, H_{\La_{1}} v \rangle = \l$, that is
     $\l\in\set{ \langle w,H_{\La_{2}} w \rangle: w\in\ell^{2}(\La_{2}), \|w\|=1}$. 
     Observe that this set is contained in $\conv (\s(H_{\La_{2}}))$,
     since, if $H_{\La_{2}} = \int \l dE(\l)$ is the spectral
     decomposition, we have $\langle v,H_{\La_{2}} v \rangle = \int \l
     d\langle v,E(\l)v \rangle$, and the claim follows from the fact that
     $d\langle v,E(\l)v \rangle$ is a probability measure on $\s(H_{\La_{2}})$. 
     Finally, $\min \s(H_{\La_{2}}) = \min \conv \s(H_{\La_{2}})
     \leq \l = \min \s(H_{\La_{1}})$.
 \end{proof}
 
 \begin{Prop}\label{traceconv}
     Let $X$ be an essentially periodic graph with regular exhaustion
     $\set{\La_{n}}$, $H\in\ca(X)$ a positive operator. 
     Then, for any continuous function $\f:[0,\infty)\to\bc$, we have

     \itm{i} $\displaystyle\
     \lim_{n\to \infty} \frac{Tr(\f(H_{\La_{n}}))}{|\La_n|} = \t(\f(H))
     $,
     
     \itm{ii} $\displaystyle\ \int \f \,dN_H = \lim_{\La_n \uparrow X} \int \f  \,dN_{H_{\La_n}}$.
 \end{Prop}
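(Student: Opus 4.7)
The two parts are closely linked. By the very definitions, $\int \varphi \, dN_{H_{\La_n}} = Tr(\varphi(H_{\La_n}))/|\La_n|$ (each eigenvalue of the finite matrix $H_{\La_n}$ contributes weight $1/|\La_n|$, counted with multiplicity), while $\int \varphi \, dN_H = \int \varphi \, d(\t \circ E_H) = \t(\varphi(H))$, so (ii) is a restatement of (i). Since $\s(H) \cup \s(H_{\La_n}) \subseteq [0, \|H\|]$ and both functionals $\varphi \mapsto Tr(\varphi(H_{\La_n}))/|\La_n|$ and $\varphi\mapsto\t(\varphi(H))$ are bounded by $\|\varphi\|_{C[0,\|H\|]}$, a Stone--Weierstrass argument reduces the problem to the monomials $\varphi(t) = t^k$.

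For $\varphi(t) = t^k$, I use a telescoping expansion obtained by repeatedly inserting $1 = P_n + (1-P_n)$ between the factors of $H^k$ and sandwiching by $P_n$:
$$
P_n H^k P_n - (P_n H P_n)^k = \sum_{j=0}^{k-1} P_n (HP_n)^j \, H \, (1-P_n) \, H^{k-j-1} P_n.
$$
Since $H^k \in \ca(X)$, the extension of the trace to $\ca(X)$ (Corollary \ref{A(X)}) yields $\lim_n Tr(P_n H^k P_n)/|\La_n| = \t(H^k)$, so it remains to show that each summand above has trace of order $o(|\La_n|)$.

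Applying Cauchy--Schwarz in Hilbert--Schmidt norm, together with the identity $P_n (HP_n)^j = (P_n H)^j P_n$ and $\|P_n\|_2 = \sqrt{|\La_n|}$, each summand is bounded in modulus by
$$
\|H\|^{k-1} \, \sqrt{|\La_n|} \, \cdot \, \|P_n H(1-P_n)\|_2 ,
$$
so the whole matter reduces to the amenability estimate $\|P_n H(1-P_n)\|_2 = o(\sqrt{|\La_n|})$. To establish it, I approximate $H$ in operator norm by an operator $H'$ of \emph{literal} finite propagation $r$, which is possible because $\ca(X) \subseteq \ca_{\text{FP}}(X)$. For such $H'$, the matrix entries of $P_n H'(1-P_n)$ vanish unless both endpoints lie within distance $r$ of $\cf \La_n$, so
$$
\|P_n H'(1-P_n)\|_2^2 \leq \|H'\|^2 \, d^{2r} \, |\cf \La_n| = o(|\La_n|)
$$
by regularity of the exhaustion. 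The perturbation error is controlled crudely by $\|P_n (H-H')(1-P_n)\|_2 \leq \|H-H'\|\sqrt{|\La_n|}$; dividing by $|\La_n|$ and letting $n \to \infty$ first, then $H' \to H$, concludes the estimate.

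The main obstacle is precisely this Hilbert--Schmidt bound, which requires combining the amenability of the exhaustion (which handles finite-propagation operators through the boundary inequality $|\cf \La_n|/|\La_n| \to 0$) with a norm-approximation argument to reach general $H \in \ca(X)$, whose $\ci(X)$-component need not itself have finite propagation. Once the bound is available, the telescoping identity, the reduction of (ii) to (i), and the Stone--Weierstrass extension from polynomials to continuous functions are all mechanical.
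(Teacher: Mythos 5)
Your proof is correct and follows essentially the same route as the paper's: reduction to monomials by Weierstrass, isolation of the cross terms containing a factor $P_nH(1-P_n)$, a boundary estimate combining finite propagation with amenability of the exhaustion, and a norm-density argument to pass from literal finite-propagation operators to all of $\ca(X)$. The only cosmetic differences are your telescoping identity with $k$ terms (in place of the paper's expansion over $\s\in\{\pm1\}^{k-1}$) and your Cauchy--Schwarz/Hilbert--Schmidt bound (in place of the paper's trace-norm estimate $\|H\|^{k-1}Tr(|E_nHE_n^{\perp}|)$).
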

 \begin{proof}
     Let us denote by $E_{n}$ the orthogonal projection from
     $\ell^{2}(X)$ onto $\ell^{2}(\La_{n})$.
     
     Observe that, for $k\geq 2$,
     \begin{align*}
	 Tr \bigl( E_{n}H^{k}E_{n} \bigr) &= Tr\bigl( E_{n} (
	 H(E_{n}+E_{n}^{\perp}) )^{k} E_{n} \bigr)\\
	 &= Tr\bigl( (E_{n}HE_{n})^{k} \bigr) + \sum_{ \substack{
	 \s\in\{-1,1\}^{k-1}\\ \s\neq \{1,1,\ldots,1\} } } Tr \bigl(
	 E_{n} \prod_{j=1}^{k-1} (HE_{n}^{\s_{j}})HE_{n} \bigr),
     \end{align*} 
     where $E_n^{-1}$ stands for $E_n^\perp$, and
     $$
     | Tr \bigl( E_{n} \prod_{j=1}^{k-1} (HE_{n}^{\s_{j}}) HE_{n}
     \bigr) | = | Tr \bigl( ...E_{n} HE_{n} ^\perp...  \bigr) | \leq
     \| H \|^{k-1} Tr ( |E_{n} HE_{n}^\perp|).
     $$
     Now assume $H$ has propagation $r$. Then, $$E_nHE_n^\perp =
     E_nE(B_{r}(V\La_n^c))HE_n^\perp=E(\La_n\cap B_{r}(V\La_n^c))HE_n^\perp,$$ hence
     $$Tr ( |E_{n} HE_{n}^\perp|) \leq \|H\|\ |B_{r-1}(\cf\La_n)|\leq\|H\|\ |\cf\La_n|d^{k-1},$$ where $d$ is the maximal degree of $X$.  As a
     consequence we obtain
     $$
     \frac1{|\La_n|}| Tr \bigl( E_{n}H^{k}E_{n} \bigr) - Tr\bigl(
     (E_{n}HE_{n})^{k} \bigr) | \leq (2d)^{k-1} \| H \|^{k}
     \frac{|\cf\La_n|}{|\La_n|} \to 0.
     $$
     Setting $\tau_n=\frac1{|\La_n|}Tr$, we have proved that $\lim_n\tau_n(E_np(H)E_n-p(E_nHE_n))=0$
     for any positive, finite propagation operator $H$ in $\ca(X)$. Since $|\tau_n(A)|\leq\|A\|$, the result follows by Weierstrass density theorem and the definition of $\ca(X)$.
 \end{proof}

 \begin{Dfn}\label{criticaldensity}
     Let $H$ be a positive operator in $\ca(X)$.  Then, for any {\it
     inverse temperature} $\b>0$, and for any {\it chemical potential}
     $\m\leq 0$, we
     define the {\it density} of $H$ as
     \begin{equation*} 
	 \r_{H}(\b,\m):=\int\frac{dN_{H}(h)}{e^{\b(h-\m)}-1}\,,
    \end{equation*}
     and the {\it critical density} of $H$ as
    \begin{equation} \label{dc}
	 \r^{H}_{c}(\b):=\int\frac{dN_{H}(h)}{e^{\b h}-1}\equiv \r_{H}(\b,0)\,.
     \end{equation}
     Let us recall that $H$  is called {\it recurrent} if the matrix elements $(\delta_x,H^{-1}\delta_x)$ are infinite, and {\it transient} if the matrix elements $(\delta_x,H^{-1}\delta_x)$ are finite.
 \end{Dfn}
 
 We say that BEC takes place for a given equilibrium state if a
 suitable portion of the particles occupies the lowest energy state.  
 \begin{Rem}
 \itm{i} It is also customary to fix the {\it  activity} $z:=e^{\b\m}$, instead of the chemical potential.
\itm{ii} If we choose $H$ as the graph Laplacian, transience and recurrence have a probabilistic interpretation. In fact, $(\delta_x,\D^{-1}\delta_y)=\sum_{n=0}^\infty p_n(x,y)$, where $p_n(x,y)$ is the probability of passing from $x$ to $y$ in $n$ steps, and the transition probability $p_1(x,y)$ is set to $(\deg x)^{-1}$ if $x$ and $y$ are adjacent, and to $0$ otherwise. As a consequence, if the graph is connected,  $(\delta_x,\Delta^{-1}\delta_x)$ is finite for all $x$ if and only if it is finite for a single $x$. Then recurrence corresponds to the following property of a random walk on $X$: the random walk starting at a point $x$ returns almost surely to $x$ infinitely many times.
Conversely,  a random walk is transient if the probability of  starting at  $x$ and returning to $x$ infinitely many times is zero. Interpreting a graph as an electrical network, transience means that the resistance between a point and infinity is finite.     For further results on transience and recurrence see e.g. \cite{Seneta}
 \itm{iii} For the standard homogeneous models investigated in literature (i.e.
 the statistical mechanics of free Bosons on $\br^{d}$ (cf. 
 \cite{BR2}), or on lattices with period $\bz^{d}$ (cf.  \cite{M}
 and the references cited therein)),\footnote{As noticed in \cite{BCRSV},  the BEC  behaviour of $\bz^{d}$--lattices only depends on $d$. This can be seen as the dispersion law ${\bf p}\mapsto\eps_{H}({\bf p})$ of a periodic Schr\"odinger operator $H:=\D+V$ on 
$\br^{d}$ or a lattice, has the same asymptotics near ${\bf 0}$
as that of the Laplacian $\D$, see e.g. \cite{RS}.}
   it is well--known that there exist
 equilibrium states exhibiting BEC if $\r^{H}_{c}(\b)<+\infty$.  This
 is also known to be equivalent to the transience of the graph, or to
 the fact that the growth of the graph is greater than 2.
 As we shall see in the following sections,  for the nonhomogeneous models treated in the present paper new phenomena (as for example the lack of the local normality of the resulting state in the thermodynamical limit) can happen.

 \itm{iv} Since we assumed $H$ to be bounded, the critical density is finite,
 namely the integral \eqref{dc} converges, iff
 \begin{equation*} 
    \int_{[0,\|H\|]}\frac{dN_{H}(h)}{h}<+\infty\,.
 \end{equation*}
 In particular, hidden spectrum implies finite critical density. 
A large class of examples of essentially periodic graphs exhibiting hidden spectrum will be described below.
\end{Rem}

Now we recall how equilibrium states are usually constructed.  Given a
positive Hamiltonian $H$ as above on the essentially periodic
graph $X$, one fixes an inverse temperature $\b>0$ and a density of particles $\r$, and determines the
chemical potential $\m(\La_{n})$ such that
 $$
 \r_{H_{\La_{n}}}(\b,\m(\La_{n}))=\r\,.
 $$
 To simplify the exposition, we suppose also that $\m(\La_{n})\to\m$, for some $\mu\in\br$, possibly
 by passing to a subsequence.  We necessarily have
 $\m(\La_{n})<E_{0}(H_{\La_n})$.  Since $E_{0}(H_{\La_n})\to
 E_{0}(H)=0$, we get $\m\leq0$.
 
 The finite volume state with density $\rho$ describing the Gibbs
 grand canonical ensemble in the volume $\La_{n}$ can be defined by
 the two--point function
\begin{equation} 
\label{sato}
     \om_{\La_{n}}(a^{+}(\xi)a(\eta))=\big\langle \eta, 
     (e^{\b(H_{\La_{n}}-\m(\La_{n})I)}-1)^{-1}\xi\big\rangle,
 \end{equation}
 where $\xi,\eta\in\ell^{2}(X)$.  Thermodynamical states are then
 described as limits of the finite volume states above.

 Let us now study the behavior of the density in the infinite volume limit.

 \begin{Prop} \label{ennz}
     Let $X$ be an essentially periodic graph with regular exhaustion
     $\set{\La_{n}}$, with positive Hamiltonian $H\in\ca(X)$, and
     assume $N_H(0)=0$.  Then, we have
     $$
     \lim_{\eps\downarrow0}\lim_{\La_{n}\uparrow X}
     \int\frac{f_{\eps}(h)}{e^{\b(h-\m(\La_{n}))}-1}dN_{H_{\La_{n}}}(h)
     =\r_H(\b,\m)\,,
     $$
     where $f_\eps$ is the continuous mollifier
     $$
     f_{\eps}(x)= 
     \begin{cases}
     0\,,&0\leq x\leq\eps\\
     \frac{x-\eps}{\eps}\,,&\eps<x\leq2\eps\\
     1\,,&2\eps<x\,.
     \end{cases}
     $$
 \end{Prop}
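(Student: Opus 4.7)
The plan is to handle the two limits separately, using the mollifier $f_{\eps}$ precisely to avoid the singularity at $h=0$ and thereby reduce the inner limit to the weak convergence of spectral distribution functions supplied by Proposition \ref{traceconv}.

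\textbf{Step 1 (inner limit).} Fix $\eps>0$, and set
\[
  \phi_n(h):=\frac{f_{\eps}(h)}{e^{\b(h-\m(\La_n))}-1},\qquad
  \phi(h):=\frac{f_{\eps}(h)}{e^{\b(h-\m)}-1}.
\]
Since $f_{\eps}$ is supported in $[\eps,\infty)$ and $\m(\La_n)\leq 0$ for large $n$, on the support of $f_{\eps}$ one has $h-\m(\La_n)\geq \eps$, so that $e^{\b(h-\m(\La_n))}-1\geq e^{\b\eps}-1>0$. Hence $\phi_n$ and $\phi$ are continuous and uniformly bounded on $[0,\|H\|]$. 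The convergence $\m(\La_n)\to\m$ then gives $\phi_n\to\phi$ uniformly on $[0,\|H\|]$. I would split
\[
  \int \phi_n\,dN_{H_{\La_n}}=\int\phi\,dN_{H_{\La_n}}+\int(\phi_n-\phi)\,dN_{H_{\La_n}};
\]
the second term is bounded by $\|\phi_n-\phi\|_{\infty}\to 0$ (since $N_{H_{\La_n}}$ has total mass $1$), while the first term converges to $\int\phi\,dN_H$ by Proposition \ref{traceconv}(ii) applied to the single continuous function $\phi$. Thus
\[
  \lim_{\La_n\uparrow X}\int\frac{f_{\eps}(h)}{e^{\b(h-\m(\La_n))}-1}\,dN_{H_{\La_n}}(h)
  =\int\frac{f_{\eps}(h)}{e^{\b(h-\m)}-1}\,dN_H(h).
\]

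\textbf{Step 2 (outer limit).} For each fixed $h>0$ one has $f_{\eps}(h)\uparrow 1$ as $\eps\downarrow 0$, so the integrand increases monotonically to the pointwise limit $\chi_{(0,\|H\|]}(h)\,(e^{\b(h-\m)}-1)^{-1}$. The monotone convergence theorem yields
\[
  \lim_{\eps\downarrow 0}\int\frac{f_{\eps}(h)}{e^{\b(h-\m)}-1}\,dN_H(h)
  =\int_{(0,\|H\|]}\frac{dN_H(h)}{e^{\b(h-\m)}-1}.
\]
Finally, the hypothesis $N_H(0)=0$ means $N_H$ carries no mass at $0$, so the right-hand side equals $\int\frac{dN_H(h)}{e^{\b(h-\m)}-1}=\r_H(\b,\m)$. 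The identity also holds in the case $\r_H(\b,\m)=+\infty$ (which can occur when $\m=0$), since monotone convergence handles the infinite case as well.

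\textbf{Main obstacle.} The subtle point is the coupling of the two limits through the sequence $\{\m(\La_n)\}$: Proposition \ref{traceconv}(ii) only delivers convergence against a single fixed continuous function, so one cannot insert $\phi_n$ directly. The mollifier together with the bound $\m(\La_n)\leq 0$ is exactly what allows us to replace $\phi_n$ by the $n$-independent $\phi$ with a uniformly small error, separating the two limiting procedures cleanly.
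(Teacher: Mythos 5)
Your proposal is correct and follows essentially the same route as the paper: the same decomposition into a uniform-convergence term (where the mollifier keeps the integrand away from the singularity) plus a weak-convergence term handled by Proposition \ref{traceconv}$(ii)$, followed by monotone convergence in $\eps$ using $N_H(0)=0$. The only cosmetic difference is that you make the uniform bound explicit via the total mass of $N_{H_{\La_n}}$, which the paper leaves implicit.
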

 \begin{proof}
     We compute
     \begin{align*}
	 &\bigg|\int\frac{f_{\eps}(h)}{e^{\b(h-\m(\La_{n}))}-1}
	 dN_{H_{\La_{n}}}(h)
	 -\int\frac{f_{\eps}(h)}{e^{\b(h-\m)}-1}dN_{H}(h)\bigg|\\
	 \leq&\bigg|\int\bigg(\frac{1}{e^{\b(h-\m(\La_{n}))}-1}
	 -\frac{1}{e^{\b(h-\m)}-1}\bigg)f_{\eps}(h)
	 dN_{H_{\La_{n}}}(h)\bigg|\\
	 +&\bigg|\int\frac{f_{\eps}(h)}{e^{\b(h-\m)}-1}dN_{H_{\La_{n}}}(h)
	 -\int\frac{f_{\eps}(h)}{e^{\b(h-\m)}-1}dN_{H}(h)\bigg|\to0,
     \end{align*}
     since
     $\frac{1}{e^{\b(h-\m(\La_{n}))}-1}\to\frac{1}{e^{\b(h-\m)}-1}$,
     uniformly on the support of $f_{\eps}$, and the second summand
     goes to zero because the finite volume sequences of traces
     converge to the infinite volume trace for fixed traceable
     operators, as follows from Proposition \ref{traceconv} $(ii)$.  The proof follows as $f_{\eps}\nearrow1$ whenever
     $\eps\searrow0$, essentially everywhere by taking into account that we
     assumed $N_H(0)=0$.\footnote{Recall that $\m=0$ is allowed
     and describes the most interesting situation of the BEC regime,
     see below.}
 \end{proof}

 Let us observe that the quantity
 $$
 n_{0}= n_{0}(\b,\r) :=\lim_{\eps\downarrow0}\lim_{\La_{n}\uparrow X}
 \int\frac{1-f_{\eps}(h)}{e^{\b(h-\m(\La_{n}))}-1}dN_{H_{\La_{n}}}(h)
 $$
 is well--defined and independent of the particular choice of the
 mollifier $f_{\eps}$ whenever the last converges monotonically to
 $\chi_{(0,+\infty)}$.  We have
\begin{equation*}
    \r=n_{0}+\r(\b,\m)\,.
\end{equation*}

From now on, we suppose that the Hamiltonians $H$, $H_{\La}$ are those
based on the adjacency matrix if it is not otherwise specified, and
drop some subscripts.  

\begin{Prop} Let $X$ be an essentially periodic graph, with adjacency matrix $A$ and $H= \norm{A}-A$. Then $N_H(0)=0$, i.e. the integrated  density of states is continuous in zero.
\end{Prop}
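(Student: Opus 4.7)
First, I observe that $N_H(0)=\t(E_H\{0\})=\t(E_A\{\|A\|\})$, since $H=\|A\|I-A\geq 0$. To reduce to a $\G$-periodic model $X_0$ of $X$, I would approximate the point indicator $\chi_{\{\l\}}$ from above by continuous mollifiers $\eta_\eps$ supported in $[\l-\eps,\l+\eps]$ with $\eta_\eps(\l)=1$; Proposition~\ref{density0} then gives $\t(\eta_\eps(A_X))=\t(\eta_\eps(A_{X_0}))$, and dominated convergence as $\eps\to 0$ yields $\t(E_{A_X}\{\l\})=\t(E_{A_{X_0}}\{\l\})$ for every $\l\in\br$. Combining this with $\|A_X\|\geq\|\pi_\t(A_X)\|=\|\pi_\t(A_{X_0})\|=\|A_{X_0}\|$ (the last equality being the content of Theorem~\ref{NoHiddenForPeriodic}, which asserts $E_m(H)=0$ for periodic graphs), I split into two cases: either $\|A_X\|>\|A_{X_0}\|$, in which case $\|A_X\|$ lies outside $\supp dN_{A_{X_0}}$ and the atom is trivially zero, or $\|A_X\|=\|A_{X_0}\|$, reducing the claim to the periodic case.

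In the periodic setting, $P:=E_A\{\|A\|\}$ lies in $\cn(X_0,\G)$, on which $Tr_\G$ is faithful, so it suffices to show that $A=A_{X_0}$ admits no $\ell^2$ eigenvector at $\|A\|$. Assume $v\in\ell^2(VX_0)\setminus\{0\}$ solves $Av=\|A\|v$. The chain of inequalities
\begin{equation*}
\|A\|\|v\|^2=\langle v,Av\rangle=\Bigl|\sum_{x\sim y}\overline{v(x)}\,v(y)\Bigr|\leq\sum_{x\sim y}|v(x)||v(y)|=\langle|v|,A|v|\rangle\leq\|A\|\|v\|^2
\end{equation*}
(the first inequality is just the triangle inequality for a sum of complex numbers, the second uses $A\geq 0$ entrywise) collapses to equalities. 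The first equality forces $\overline{v(x)}v(y)\geq 0$ whenever $x\sim y$, so $v$ has a common complex phase on all of $VX_0$; up to a unimodular scalar we may take $v\geq 0$. The second gives $A|v|=\|A\||v|$; hence at any $x_0$ with $v(x_0)=0$ we have $\sum_{y\sim x_0}v(y)=0$, forcing $v(y)=0$ on all neighbours of $x_0$, and connectedness of $X_0$ then yields $v>0$ everywhere.

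For each $\g\in\G$, $\g v$ is again a strictly positive $\ell^2$ eigenvector at $\|A\|$. Choosing $c:=v(x_0)/(\g v)(x_0)>0$, the real difference $w:=v-c\cdot\g v$ still satisfies $Aw=\|A\|w$ and vanishes at $x_0$; re-running the common-phase/propagation argument on $w$ forces $w\equiv 0$, so $\g v=c(\g)\,v$ for a positive scalar, and $\g\mapsto c(\g)$ is a group homomorphism $\G\to\br_{>0}$. Writing $VX_0=\G\cdot F$ with $F$ a fundamental domain and using $v(\g x)=c(\g)^{-1}v(x)$,
\begin{equation*}
\|v\|^2=\sum_{\g\in\G}\sum_{x\in F}v(\g x)^2=\Bigl(\sum_{\g\in\G}c(\g)^{-2}\Bigr)\sum_{x\in F}v(x)^2 .
\end{equation*}
But no positive homomorphism on an infinite group can have $c^{-1}\in\ell^2(\G)$: if $c\equiv 1$ the sum equals $|\G|=\infty$, otherwise some $c(\g_0)\neq 1$ and the partial sum $\sum_{n\in\bz}c(\g_0^n)^{-2}=\sum_{n\in\bz}c(\g_0)^{-2n}$ already diverges, one of its geometric tails having common ratio $>1$. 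This contradicts $v\in\ell^2$, completing the proof.

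The main obstacle I anticipate is the Perron--Frobenius step establishing $\g v=c(\g)v$: the classical finite-dimensional proof (via $\min u_1/u_2$) can break down in the $\ell^2$ setting, because the infimum need not be attained on an infinite graph. My approach sidesteps this difficulty by feeding the \emph{signed} eigenvector $w=v-c\cdot\g v$ directly into the common-phase/propagation machinery, exploiting that the pinching identity $A|w|=\|A\||w|$ is available precisely because $\|A\|$ sits at the very top of the spectrum.
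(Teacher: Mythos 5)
Your proof is correct and follows essentially the same route as the paper: transfer the atom at the bottom of the spectrum to the underlying periodic graph via the trace invariance of Proposition \ref{density0} (your dichotomy $\|A_X\|>\|A_{X_0}\|$ versus $\|A_X\|=\|A_{X_0}\|$ is exactly the paper's hidden/no-hidden-spectrum dichotomy), and then rule out an $\ell^2$ Perron--Frobenius eigenvector on the periodic graph using $\G$-invariance and faithfulness of $Tr_\G$ on $\cn(X,\G)$. The only difference is one of detail: the paper simply cites the uniqueness and strict positivity of an $\ell^2$ Perron--Frobenius eigenvector and its consequent periodicity, whereas you prove these from scratch; note also that your character argument can be shortened, since $\l(\g)$ is unitary, so $\l(\g)v=c(\g)v$ with $v\neq 0$ forces $c(\g)=1$ and hence periodicity of $v$ directly.
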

\begin{proof}
Let us assume first that $X$ is a periodic amenable graph.  Then, if $N_H(0)\ne0$, the $0$-eigenspace of $H$ is non-trivial, namely there is an $\ell^2$ Perron-Frobenius eigenvector for $A$, which is necessarily unique.  However, since $H$ is $G$ invariant, such vector is also periodic.  This is absurd. 
 
 Let now $X$ be essentially periodic. If there is no hidden spectrum, then $N_H(\lambda)$ is the same as that of a periodic graph, hence the result follows from the previous case. On the other hand, 
 hidden  spectrum immediately implies   $N_H(0)=0$.
 \end{proof}

 \begin{Lemma} \label{StrongConv}
     Let $X$ be an essentially periodic graph with regular exhaustion
     $\set{\La_{n}}$, $A$ the adjacency matrix of $X$.  Let $\set{\mu_n}\subset(-\infty,0]$, $\mu_n\to\mu$, and set
     $H:=(\|A\|-\mu)I-A$, and $H_{n}:= (\|A \|-\mu_n)I - A_{\La_{n}}$.  Let
     $\f:[0,\infty)\to\bc$ be a continuous function.  Then, for any
     $\xi\in\ell^{2}(X)$, $\f(H_{n})\xi\to \f(H)\xi$.
 \end{Lemma}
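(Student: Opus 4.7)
The plan is to reduce to a standard approximation argument: first establish strong convergence $H_n \to H$, extend by induction to polynomials in $H_n$, and then use Weierstrass to conclude for arbitrary continuous $\varphi$. The key point is that the operators involved are positive, self-adjoint, and uniformly norm-bounded.

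\textbf{Step 1 (strong convergence of $H_n$ to $H$).} Let $P_n$ denote the orthogonal projection onto $\ell^{2}(\La_n)$, so that $A_{\La_n}=P_nAP_n$ and $H_n=(\|A\|-\mu_n)P_n - P_n A P_n$, extended by zero off $\ell^{2}(\La_n)$. For any $\xi\in\ell^2(X)$, write
\[
H_n\xi - H\xi = (\|A\|-\mu)(P_n-I)\xi + (\mu-\mu_n)P_n\xi + (I-P_n)A\xi + P_nA(I-P_n)\xi.
\]
Since $P_n\to I$ strongly, $\mu_n\to\mu$, and $A$ is bounded, each of the four terms tends to zero in norm. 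Thus $H_n\xi\to H\xi$.

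\textbf{Step 2 (polynomials).} Since $\mu_n$ is bounded and $\|A_{\La_n}\|\leq\|A\|$, there exists $M>0$ with $\|H_n\|\leq M$ for all $n$, and also $\|H\|\leq M$. Both $H$ and $H_n$ are positive with spectra in $[0,M]$. I claim $H_n^k\xi\to H^k\xi$ for every $k\geq 0$, by induction on $k$. The base case $k=1$ is Step 1. For the inductive step, set $\eta_n=H_n^k\xi$ and $\eta=H^k\xi$, so $\eta_n\to\eta$. Then
\[
H_n^{k+1}\xi - H^{k+1}\xi = H_n(\eta_n-\eta) + (H_n-H)\eta,
\]
and the first term is bounded by $M\|\eta_n-\eta\|\to 0$, while the second tends to zero by Step 1. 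By linearity, $p(H_n)\xi\to p(H)\xi$ for every polynomial $p$.

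\textbf{Step 3 (continuous functions).} Given $\varphi\in C([0,\infty))$ and $\varepsilon>0$, by the Weierstrass theorem there is a polynomial $p$ with $|\varphi(t)-p(t)|<\varepsilon$ on $[0,M]$. The continuous functional calculus for the positive self-adjoint operators $H_n,H$ (whose spectra lie in $[0,M]$) gives $\|\varphi(H_n)-p(H_n)\|\leq\varepsilon$ and $\|\varphi(H)-p(H)\|\leq\varepsilon$. Hence
\[
\|\varphi(H_n)\xi-\varphi(H)\xi\| \leq 2\varepsilon\|\xi\| + \|p(H_n)\xi - p(H)\xi\|,
\]
and the last term tends to $0$ by Step 2. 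Therefore $\limsup_n\|\varphi(H_n)\xi-\varphi(H)\xi\|\leq 2\varepsilon\|\xi\|$, and letting $\varepsilon\downarrow 0$ gives the result.

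The argument has no real obstacle; the only place requiring care is Step 1, where one must correctly track the four error terms coming from the two projections in $H_n=(\|A\|-\mu_n)P_n-P_nAP_n$ and from $\mu_n\neq\mu$. Once uniform boundedness of $\|H_n\|$ is noted, the rest is the textbook passage from strong convergence on polynomials to strong convergence on $C_b$ via the functional calculus.
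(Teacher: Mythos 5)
Your proof is correct and follows essentially the same route as the paper: strong convergence $H_n\xi\to H\xi$, then polynomials via uniform boundedness of $\|H_n\|$, then Weierstrass approximation. The only (immaterial) difference is in the first step, where the paper verifies $A_{\La_n}\xi\to A\xi$ by first checking it on finitely supported vectors and using $\|A_{\La_n}\|\leq\|A\|$, whereas you decompose $H_n\xi-H\xi$ directly into four terms controlled by $P_n\to I$ strongly.
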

 \begin{proof}
     Let $x\in VX$; then, for $n\in\bn$ large enough,
     $A_{\La_{n}}\d_{x}=A\d_{x}$, so that $A_{\La_{n}}\xi=A\xi$, for
     $\xi\in\ell^{2}(X)$ and with finite support.  Let $\xi\in\ell^{2}(X)$
     be arbitrary, and, for any $\eps>0$, let
     $\xi_{\eps}\in\ell^{2}(X)$ and with finite support be such that
     $\|\xi-\xi_{\eps}\| <\eps$.  Then, for some $n_{\eps}\in\bn$, we
     have $A_{\La_{n}}\xi_{\eps}=A\xi_{\eps}$, for any $n>n_{\eps}$. 
     Therefore, for $n>n_{\eps}$ we have $\|A_{\La_{n}}\xi - A\xi\|
     \leq \|A_{\La_{n}}(\xi-\xi_{\eps})\| + \|A_{\La_{n}}\xi_{\eps} -
     A\xi_{\eps}\| + \|A(\xi-\xi_{\eps})\| \leq 2\|A\|
     \|\xi-\xi_{\eps}\| <2\|A\|\eps$.  Hence $H_{n}\xi\to H\xi$.
     
     Therefore, for any $k\in\bn$, $H_{n}^{k}\xi\to H^{k}\xi$, and the
     claim is true for any polynomial.  Finally, let
     $\f:[0,\infty)\to\bc$ be a continuous function, and, for any
     $\eps>0$, let $p$ be a polynomial such that $\|\f-p\|_\infty<\eps$. 
     Then, for any $\xi\in\ell^{2}(X)$, we have $\|\f(H_{n})\xi -
     \f(H)\xi\| \leq \|\f(H_{n})\xi - p(H_{n})\xi\| + \|p(H_{n})\xi -
     p(H)\xi\| + \|p(H)\xi - \f(H)\xi\| \leq 2\|\f-p\|_\infty \|\xi\| +
     \|p(H_{n})\xi - p(H)\xi\|$, from which the claim follows.
 \end{proof}
 
 \begin{Thm} \label{regi}
     Let $X$ be an essentially periodic graph with regular exhaustion
     $\set{\La_{n}}$, $A$ the adjacency matrix of $X$, $H:=\|A\|-A$.  Then
     
     \itm{i} $\m<0$ if and only if $\r<\r_{c}(\b)$,  
     
     \itm{ii} for any $\b>0$, $\m<0$, the sequence \eqref{sato}
     converges pointwise to a state $\om$, whose two--point function
     is given by
     $$
     \om(a^{+}(\xi)a(\eta))=\big\langle \eta, (e^{\b(H-\m
     I)}-1)^{-1}\xi \big\rangle\,.
     $$
     Moreover, the {\it density} $\r(\om)$ of the state $\om$, defined by 
    \begin{equation*} 
	 \r(\om):=\lim_{\La_{n}\uparrow X}\frac{1}{|\La_{n}|}
	\sum_{j\in\La_{n}}\om(a^{+}(\d_{j})a(\d_{j}))\,,
     \end{equation*}
     satisfies $\r(\om)=\r(\b,\m)$.     

     \itm{iii} The transience of $A$ is a necessary condition for
     the existence of locally normal states on $\ccr(\gph)$ at or above the critical
     density, {\it i.e.} for $\m=0$.  Here $\gph\subset\ell^2(X)$ is a subspace
     containing the functions with finite support.
 \end{Thm}
 \begin{proof}
     $(i) (\implies)$ If $\m(\La_{n})\to\m<0$, we can suppose that
     $z(\La_{n})\equiv e^{\b\m(\La_{n})}\leq K<1$ for each $n$.  We have
     \begin{align*}
	 &\int\frac{1-f_{\eps}(h)}{e^{\b(h-\m(\La_{n}))}-1}
	 dN_{H_{\La_{n}}}(h) \leq\frac{z(\La_{n})}{1-z(\La_{n})}
	 \int(1-f_{\eps}(h))dN_{H_{\La_{n}}}(h)\\
	 &\leq\frac{K}{1-K} \int(1-f_{\eps}(h))dN_{H_{\La_{n}}}(h)
	 \to\frac{K}{1-K}N_H(0) = 0\,,
     \end{align*}
     whenever $\La_{n}\uparrow X$, and then $\eps\searrow0$.  Therefore, $n_0=0$,
     so that $\r=\r(\b,\m)<\r(\b,0)\equiv\r_{c}(\b)$.

     \noindent $(\coimplies)$  Conversely, suppose that $\r<\r_{c}(\b)$ and $\m(\La_{n})\to0$. 
     Then
     \begin{align*}
	 \r_{c}(\b)& > \r=\int\frac{1-f_{\eps}(h)}{e^{\b(h-\m(\La_{n}))}-1}
	 dN_{H_{\La_{n}}}(h)
	 +\int\frac{f_{\eps}(h)}{e^{\b(h-\m(\La_{n}))}-1}dN_{H_{\La_{n}}}(h)\\
	 &\geq\int\frac{f_{\eps}(h)}{e^{\b(h-\m(\La_{n}))}-1}dN_{H_{\La_{n}}}(h)
	 \to\r(\b,0)\equiv\r_{c}(\b)\,,
     \end{align*}
     whenever $\La_{n}\uparrow X$, and then $\eps\searrow0$.  As this
     is impossible, we must have $\m<0$.

     $(ii)$ The fact that $\om_{\La_{n}}\to\om$ in the $*$--weak
     topology follows from $H_{\La_{n}}\to H$ in the strong operator
     topology.  Indeed, since $H_{n}-\m(\La_{n})\to H-\m$ strongly, it
     follows from Lemma \ref{StrongConv} that $\bigl( e^{\b(H_{n} -
     \m(\La_{n}))}-1 \bigr)^{-1} \to \bigl( e^{\b(H-\m)}-1
     \bigr)^{-1}$ strongly [because $\m<0$ so that $1\notin
     \s(e^{\b(H-\m)})$].  Therefore, for any $\xi,\eta\in\ell^{2}(X)$,
     we have $\om_{\La_{n}}(a^{+}(\xi)a(\eta)) = \langle \eta, \bigl(
     e^{\b(H_{n} - \m(\La_{n}))}-1 \bigr)^{-1} \xi  \rangle \to
     \langle \eta,  \bigl( e^{\b(H-\m)}-1 \bigr)^{-1} \xi \rangle =
     \om(a^{+}(\xi)a(\eta))$.  Since the vector space $V:=\ssv
     \set{ a^{+}(\xi)a(\eta) : \xi,\eta\in\ell^{2}(X)}$ is dense in
     $\ccr$, then $\om_{\La_{n}}$ is a Cauchy sequence in the
     weak$^{*}$ topology, so it converges.  Indeed, for any $T\in
     \ccr$, $\eps>0$, there is $T_{\eps}\in V$ such that
     $\|T-T_{\eps}\|<\eps$, so that, for any $m,n$ large enough,
     $|\om_{\La_{m}}(T)-\om_{\La_{n}}(T)| \leq
     |\om_{\La_{m}}(T)-\om_{\La_{m}}(T_{\eps})| +
     |\om_{\La_{m}}(T_{\eps})-\om_{\La_{n}}(T_{\eps})| +
     |\om_{\La_{n}}(T_{\eps})-\om_{\La_{n}}(T)| \leq 3\eps$.
         
     Finally,
     $$
     \int\frac{dN_{H_{\La_{n}}}(h)}{e^{\b(h-\m(\La_{n}))}-1}
     \to\int\frac{dN_{H}(h)}{e^{\b(h-\m)}-1}
     $$
     as in the proof of Proposition \ref{ennz}, because 
     $$
     \frac{1}{e^{\b(h-\m(\La_{n}))}-1} \to\frac{1}{e^{\b(h-\m)}-1}\,,
     $$
     uniformly on $[0,\|H\|]$, and the measures induced by the
     $N_{H_{\La_{n}}}$ converge in the $*$--weak topology to that
     induced by $N_{H}$.
     
     $(iii)$  Indeed, the following limit should be finite:
     \begin{align*}
	 & \lim_{\eps\downarrow 0}\lim_{\La_{n}\uparrow X} \langle
	 \xi,f_{\eps}(H_{n})(e^{\b(H_{n}-\m_{n})}-1)^{-1}\eta \rangle
	 \\
	 & = \lim_{\eps\downarrow 0}\lim_{\La_{n}\uparrow X}
	 \langle \xi,f_{\eps}(H_{n})(e^{\b H_{n}}-1)^{-1}\eta \rangle \\
	 & + \lim_{\eps\downarrow 0}\lim_{\La_{n}\uparrow X}\langle \xi,
	 f_{\eps}(H_{n}) \bigl( (e^{\b(H_{n}-\m_{n})}-1)^{-1}-(e^{\b
	 H_{n}}-1)^{-1} \bigr) \eta \rangle.
     \end{align*}
     Since for $\m_{n}\to 0$ the second summand is zero, we need the
     finiteness of the first.  By Lemma \ref{StrongConv},
     \begin{align*}
	 & \lim_{\eps\downarrow 0}\lim_{\La_{n}\uparrow X}
	 \langle \xi,f_{\eps}(H_{n})(e^{\b H_{n}}-1)^{-1}\eta \rangle \\
	 &= \lim_{\eps\downarrow 0} \langle \xi,f_{\eps}(H)(e^{\b
	 H}-1)^{-1}\eta \rangle \\
	 & = \langle \xi,(e^{\b H}-1)^{-1}\eta \rangle.
     \end{align*}
     The finiteness of $\langle \xi,(e^{\b H}-1)^{-1}\eta \rangle$
     when $\xi,\eta$ have finite support is exactly the transience of
     $A_X$.
 \end{proof}

 \begin{Rem}
  \itm{i} Let us observe that $n_{0}>0$ if and only if
  $\r>\r_{c}(\b)$.  Indeed, by definition, $n_0=0$ whenever $\m<0$,
  hence $n_0>0\Rightarrow \m=0$.  As a consequence,
  $0<n_0=\r-\r(\b,\m)=\r-\r_c(\b)$, i.e. $\r>\r_c(\b)$.  Conversely,
  $\r>\r_c(\b)$ implies $\m=0$, hence
  $n_0=\r-\r(\b,\m)=\r-\r_c(\b)>0$.
    
  \itm{ii} Observe that $n_{0}>0$ can be obtained only if
  $\m(\La_{n})\to0$.  Indeed, $\mu=0$ is a necessary condition for the
  occurrence of BEC.
\end{Rem}

\section{Some results on Perron--Frobenius eigenvectors}\label{PF}

Let $X$ be a finite connected graph, $A$ its adjacency matrix.  By
Perron--Frobenius Theorem there exists a unique eigenvector with
eigenvalue $\|A\|$, and it is the unique eigenvector having strictly
positive entries.
When $X$ is infinite, the existence of a square summable Perron--Frobenius
eigenvector is no longer guaranteed. If such vector exists is
unique and has strictly positive entries.
However, if $X$ has bounded degree, the equation $Av=\|A\|v$ makes
sense also for vectors which are simply functions $v:X\mapsto\bc$. 
Vectors of this kind satisfying the equation $Av=\|A\|v$ will be
called generalized Perron--Frobenius eigenvectors.  A generalized
Perron--Frobenius eigenvector has strictly positive entries but it is
not necessarily unique, see \cite{FGI} for the general Comb graphs, and \cite{Fidaleo} for Cayley Trees.  Indeed it is unique if the graph is
$A$--recurrent \cite{Seneta}.
Now we show the existence of such (generalized) Perron-Frobenius
eigenvectors for the network under consideration.

Assume $\La_n$ is an exhaustion for $X$, namely an increasing family
of connected finite subgraphs whose union is $X$, and choose a vertex
$x_0\in V\La_1$.  Then let $v_n$ be the Perron-Frobenius vector for
$A_n:=A_{\La_n}$ normalized by $\langle \d_{x_0},v_n \rangle = 1$.  We extend all
these vectors to zero  outside $\La_n$.

\begin{Prop}
With the above notation, $A$ has a generalized Perron--Frobenius eigenvector.
\end{Prop}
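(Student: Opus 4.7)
The plan is to obtain $v$ as a pointwise limit of a subsequence of the finite-volume Perron--Frobenius vectors $v_n$, using a compactness / diagonal argument. The three ingredients are: convergence of eigenvalues $\|A_n\| \to \|A\|$, a uniform pointwise upper bound on $v_n$ (so that a subsequential limit exists), and stability of the eigenvalue equation under pointwise limits.

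First I would observe that $\lambda_n := \|A_n\|$ is monotone (this is essentially Lemma \ref{Lem:IncrSpectrum} applied to $\|A\|I - A$, or more directly the standard fact that enlarging a principal submatrix of a symmetric nonnegative matrix cannot decrease its spectral radius) and bounded above by $\|A\|$; and one checks $\lambda_n \to \|A\|$ by testing $A$ on finitely supported vectors. For the uniform bound, fix $x \in VX$ with $d(x_0,x)=d$ and a geodesic path $x_0 = y_0, y_1, \dots, y_d = x$. For $n$ large enough this path lies in $\Lambda_n$, so $(A_n^d)_{x_0,x}\geq 1$. Since $A_n v_n = \lambda_n v_n$ and $v_n\geq 0$,
\[
\lambda_n^d = \lambda_n^d v_n(x_0) = (A_n^d v_n)(x_0) \geq (A_n^d)_{x_0,x}\,v_n(x) \geq v_n(x),
\]
which yields $v_n(x)\leq \|A\|^d$, uniformly in large $n$. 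The same eigenvalue equation run in the opposite direction gives $v_n(x) \geq \lambda_n^{-1} v_n(y_{d-1}) \geq \cdots \geq \lambda_n^{-d}v_n(x_0) = \lambda_n^{-d}$.

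Now by the standard diagonal extraction argument over the countable set $VX$, some subsequence $v_{n_k}$ converges pointwise to a function $v : VX \to [0,\infty)$ with $v(x_0)=1$. Since the graph has bounded degree $d$, for any fixed vertex $x$ the sum $(A_n v_n)(x)=\sum_{y\sim x} v_n(y)$ has only finitely many terms, each of which is eventually equal to the corresponding term of $(A v)(x)$; combined with $\lambda_{n_k}\to\|A\|$, passing to the limit in $A_{n_k} v_{n_k} = \lambda_{n_k} v_{n_k}$ pointwise at $x$ gives $(Av)(x) = \|A\|\,v(x)$. Finally, the lower bound $v_n(x)\geq \lambda_n^{-d(x_0,x)}$ passes to the limit and yields $v(x)\geq \|A\|^{-d(x_0,x)}>0$, so $v$ has strictly positive entries, i.e., it is a generalized Perron--Frobenius eigenvector for $A$.

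The main obstacle is the uniform upper bound $v_n(x)\leq \|A\|^{d(x_0,x)}$: once it is in hand, everything reduces to elementary pointwise limits. The bound is obtained by exploiting the combinatorial positivity of $A_n^d$ together with the Perron--Frobenius normalization $v_n(x_0)=1$, as above.
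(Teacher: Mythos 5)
Your proposal is correct and follows essentially the same route as the paper: monotone convergence $\|A_n\|\nearrow\|A\|$ via finitely supported test vectors, the uniform bound $v_n(x)\leq\|A_n\|^{d(x_0,x)}$ extracted from the eigenvalue equation (the paper phrases it as an induction on the distance rather than via $(A_n^d)_{x_0,x}\geq 1$, but it is the same estimate), diagonal extraction, and pointwise passage to the limit using $A\d_x=A_{n_k}\d_x$ for large $k$. Your explicit lower bound $v_n(x)\geq\l_n^{-d(x_0,x)}$ is a small addition that makes the strict positivity of the limit explicit, which the paper leaves as a general remark.
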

\begin{proof}
We first show that $\|A\|=\lim_n\|A_n\|$.  Indeed, if $Y\subset Z$ is a
proper inclusion of graphs with $Y$ finite and $Z$ connected, and $w$
is the norm-one Perron-Frobenius vector for $A_Y$, then
$\|A_Y\|= \langle w,A_Yw \rangle <  \langle w,A_Zw \rangle \leq\|A_Z\|$.  Therefore the sequence
$\|A_n\|$ is strictly increasing and bounded by $\|A\|$.  Assume now
ad absurdum that $\lim_n\|A_n\|<\|A\|$.  Then we could find a norm-one
vector $z$ with finite support such that $ \langle z,Az \rangle >\lim_n\|A_n\|$. 
Choosing $n$ large enough, we get $\langle z,Az \rangle =\langle z,A_nz \rangle \leq\|A_n\|$, which
gives a contradiction.

Finally we construct a generalized Perron-Frobenius vector for $X$. Since 
$$
1=\langle \d_{x_0}, v_n \rangle =\|A_n\|^{-1}\langle \d_{x_0},A_nv_n \rangle =\|A_n\|^{-1}\sum_{x\sim x_0}\langle \d_{x},v_n \rangle 
\geq \|A_n\|^{-1}\max_{x\sim x_0}\langle \d_x,v_n \rangle,
$$
then $x\sim x_0$ implies $\langle \d_{x},v_n \rangle \leq\|A_n\|$.  By induction we get
$d(x,x_0)\leq d$ implies $\langle \d_{x},v_n \rangle \leq\|A_n\|^d$.  >From this we can
obtain a subsequence $v_{n_k}$ such that $\langle \d_{x},v_{n_k} \rangle$ converges for
any $x\in X$.  Let us denote by $v$ the vector s.t. $\lim_k
\langle \d_{x},v_{n_k} \rangle =\langle \d_{x},v \rangle$.  We observe that, given $x\in X$, for sufficiently
large $k$, we have $A \d_{x}=A_{n_k} \d_{x}$.  Therefore
$$
\langle  \d_{x},Av \rangle =\langle A \d_{x},v \rangle = \lim_k \langle  \d_{x},A_{n_k}v_{n_k} \rangle =\lim_k
\|A_{n_k}\| \langle  \d_{x},v_{n_k} \rangle =\langle  \d_{x},  \|A\|v  \rangle,
$$
which means that $v$ is a generalized Perron-Frobenius vector for $X$.
\end{proof}

\section{No Hidden Spectrum for periodic graphs}

 In this section we want to show that there is no hidden spectrum for
 the adjacency operator on a periodic graph.  This means that the
 bottom of the spectrum for the energy operator $\|A\|-A$ coincides
 with the infimum of the support of the spectral measure of the energy
 operator in the trace representation, or, equivalently, that the
 supremum of the spectrum of $A$ coincides with the supremum of the
 spectrum for $\pi(A)$, where $\pi$ is the trace representation.

 We have already proved that, given an exhaustion $\La_{n}$ of the
 graph $X$, and denoting with $A_{n}$ the adjacency operator for
 $\La_{n}$, then $\|A_{n}\|\nearrow\|A\|$.

 Let now $v\in\ell^{2}(VX)$ be a unit vector with support contained in
 $\La_{n}$, $\g\in\G$ and consider the projection operator
 $P_{v}(\g)$ on the vector $\l(\g)v$.  Let us observe that, since
 $P_{v}(\g)$ is a projection operator on $\l(\g)v$, for any bounded
 operator $C$,
 $$
 Tr(CP_{v}(\g))=\langle \l(\g)v,C\l(\g)v \rangle.
 $$
 We need the following.

 \begin{Lemma}
     Let $X$ be a periodic graph, $K$ a finite subgraph, $v$ the
     normalised Perron-Frobenius eigenvector of $K$, and consider, for
     any $\g\in\G$, the projection operator $P_{v}(\g)$ on the vector
     $\l(\g)v$.  Then the series
     $\displaystyle{\sum_{\g\in\G}P_{v}(\g)}$ converges strongly to an
     operator $T$ which belongs to the von Neumann algebra
     $\cn(X,\G)$.
 \end{Lemma}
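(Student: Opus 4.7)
The plan is to prove strong convergence by showing that the increasing net of positive partial sums $S_F := \sum_{\g \in F} P_v(\g)$ (indexed by finite subsets $F \subset \G$) is uniformly bounded in norm; then the supremum exists as a bounded positive operator and the net converges strongly to it. The $\G$-invariance will then follow by reindexing.

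First I would establish the key geometric fact that each vertex of $X$ lies in only boundedly many translates of $VK$. Indeed, since $\G$ acts freely and $K$ is finite, for any fixed $x\in VX$ the set $\set{\g\in\G: x\in\g VK}$ has cardinality at most $|VK|$. Equivalently, the number of $\d\in\G$ such that $\d VK \cap VK\neq\vuoto$ is bounded by a constant $C=C(K)$, since such $\d$ must send some vertex of $K$ to another vertex of $K$. This is the finiteness statement that will control the overlap of the supports of the vectors $\l(\g)v$.

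Next I would bound $\|S_F\|$ uniformly. The cleanest way is to view $S_F$ as $S^*S$ where $S:\ell^2(VX)\to \ell^2(F)$ sends $\xi$ to $\bigl(\langle \l(\g)v,\xi \rangle\bigr)_{\g\in F}$; then $\|S_F\|=\|SS^*\|$ where $SS^*$ has Gram matrix entries $G(\g,\g')=\langle \l(\g)v,\l(\g')v \rangle$. By Cauchy--Schwarz, $|G(\g,\g')|\leq \|v\|^2=1$, and by the previous paragraph $G(\g,\g')$ vanishes unless $\g^{-1}\g'$ lies in the finite set $\set{\d: \d VK\cap VK\neq\vuoto}$. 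Thus each row (and column) of $G$ has at most $C(K)$ nonzero entries, each of modulus at most $1$, so the Schur test gives $\|SS^*\|\leq C(K)$. This bound is independent of $F$, so the positive net $\{S_F\}$ is bounded above; by a standard monotone convergence result for bounded increasing nets of positive operators, $S_F$ converges strongly to a positive operator $T$ with $\|T\|\leq C(K)$.

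Finally, for $\G$-invariance: for any $\g_0\in\G$ one has
$$
\l(\g_0) P_v(\g) \l(\g_0)^{-1} = P_{\l(\g_0)\l(\g)v} = P_{\l(\g_0\g)v} = P_v(\g_0\g),
$$
so conjugating $S_F$ by $\l(\g_0)$ gives $S_{\g_0 F}$. Passing to the strong limit (using joint strong continuity of multiplication on norm-bounded sets) yields $\l(\g_0) T \l(\g_0)^{-1}=T$, i.e.\ $T\in\cn(X,\G)$. The main obstacle is the uniform norm bound on $S_F$: everything else is a routine monotone-net and invariance argument. The bound itself is not hard once one observes the bounded-overlap property of the translates $\g K$, which relies essentially on $K$ being finite and the $\G$-action being free.
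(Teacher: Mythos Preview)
Your proof is correct and takes a genuinely different route from the paper's. The paper proves strong convergence by establishing a Cauchy-type tail estimate: for any $w\in\ell^2(VX)$ and any $\eps>0$, one can find a finite $F_\eps\subset\G$ so that $\|\sum_{\g\in E}P_v(\g)w\|^2<\eps$ for every finite $E\subset\G\setminus F_\eps$. This is done by a direct computation expanding the square, using that $\langle\d_x,\l(\g)v\rangle$ vanishes unless $\g^{-1}x\in K$, and controlling the resulting sum by $|K|^2(\sup_x|B(x,\diam K)|)^2\sum_{y\in EK}|w_y|^2$. The paper never explicitly bounds $\|S_F\|$ and does not spell out the $\G$-invariance argument.

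Your approach is more structural: you extract a uniform operator-norm bound on the partial sums via the Gram matrix and the Schur test, and then invoke monotone convergence for positive nets. This buys you a cleaner argument and, for free, an explicit bound $\|T\|\leq C(K)$ (indeed $C(K)\leq|VK|^2$). The paper's approach, by contrast, yields unconditional strong summability directly, without appealing to positivity, and makes the role of the support geometry of $w$ more visible. Both rely on the same underlying finiteness---bounded overlap of the translates $\g K$---but package it differently.
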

 \begin{proof}
     We need to show that, for any $\eps>0$, and $w=\sum_{y\in X}
     w_{y}\d_{y} \in\ell^{2}(X)$, there is a finite set
     $F_{\eps}\subset \G$ such that, for all finite sets $E\subset
     \G\meno F_{\eps}$, we have $\| \sum_{\g\in E} P(\g)w \|^{2}
     <\eps$.
     
     Indeed, for any finite $E\subset \G$, we have
     \begin{align*}
	 \|\sum_{\g\in E}P_{v}(\g)w\|^{2} & = \sum_{x\in X} | \langle
	 \d_{x},\sum_{\g\in E}P_{v}(\g)w \rangle |^{2} \\
	 & = \sum_{x\in X} | \sum_{\g\in E} \sum_{y\in X} 
	 w_{y} \langle \d_{x},P_{v}(\g)\d_{y} \rangle |^{2} \\
	 & = \sum_{x\in X} | \sum_{\g\in E} \sum_{y\in X} w_{y}
	 \langle \d_{x},\l(\g)v \rangle \langle \l(\g)v,\d_{y}\rangle
	 |^{2} \\
	 & \leq \sum_{x\in X} \Bigl( \sum_{\g\in E} \sum_{y\in X} |w_{y}| \cdot |
	 \langle \d_{x},\l(\g)v \rangle | \cdot | \langle \l(\g)v,\d_{y}
	 \rangle | \Bigr)^{2}.
    \end{align*}
    The scalar products in the last line are possibly non zero only 
    if $\g^{-1}x,\g^{-1}y\in K$, so that $d(x,y) = 
    d(\g^{-1}x,\g^{-1}y) \leq \diam K=:d$, and $x,y\in \g K\subset 
    EK := \cup_{\g'\in E} \g' K$. Let us observe that $|\set{\g\in E: 
    \g^{-1}x\in K}| = |\set{\g\in E: x\in\g K}| \leq |K|$, so that
    \begin{align*}
	\|\sum_{\g\in E}P_{v}(\g)w\|^{2} & \leq \sum_{x\in X} \Bigl(
	|\set{\g\in E: \g^{-1}x\in K}| \sum_{y\in B(x,d)\cap EK}
	|w_{y}| \Bigr)^{2} \\
	& \leq |K|^{2} \sum_{x\in X} \Bigl( \sum_{y\in B(x,d)\cap EK}
	|w_{y}| \Bigr)^{2} \\
	& \leq |K|^{2} \sum_{x\in X} |B(x,d)\cap EK| \sum_{y\in
	B(x,d)\cap EK} |w_{y}|^{2} \\
	& \leq |K|^{2} \sup_{x\in X} |B(x,d)| \sum_{y\in EK} \sum_{x\in
	B(y,d)} |w_{y}|^{2} \\
	& \leq |K|^{2} (\sup_{x\in X} |B(x,d)|)^{2} \sum_{y\in EK}
	|w_{y}|^{2}.
    \end{align*}
    
    Let now $H_{\eps}$ be a finite subset of $VX$ such that $\sum_{y\in 
    X\meno H_{\eps}} |w_{y}|^{2}<\eps$, and set $F_{\eps} := 
    \set{\g\in\G: \g K\cap H_{\eps} \neq\vuoto}$, so that, for any 
    finite set $E\subset \G\meno F_{\eps}$, we have $EK = \cup_{\g\in 
    E} \g K \subset \G\meno H_{\eps}$, and $\sum_{y\in EK} 
    |w_{y}|^{2}$, so that 
    $$
    \|\sum_{\g\in E}P_{v}(\g)w\|^{2}  \leq |K|^{2} (\sup_{x\in X} 
    |B(x,d)|)^{2} \eps,
    $$
    which establishes the claim.
 \end{proof}

 \begin{Thm} \label{NoHiddenForPeriodic}
    Let $A$ be the adjacency matrix of a periodic graph $X$, $\pi$ the
    trace representation of the von Neumann algebra $\cn(X,\G)$.  Then
    $$
    \|A\|=\sup\s(\pi(A))=\|\pi(A)\|,
    $$
    $i.e.$ $H:= \|A\| - A$ does not have hidden spectrum.
 \end{Thm}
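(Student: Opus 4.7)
The plan is to exploit the operators $T_{n}:=\sum_{\g\in\G}P_{v_{n}}(\g)\in\cn(X,\G)$ from the preceding lemma---with $v_{n}$ the unit Perron--Frobenius eigenvector of $A_{\La_{n}}$, extended by zero to $\ell^{2}(VX)$---as test vectors in the GNS representation $\pi$ of $\cn(X,\G)$ induced by $Tr_{\G}$. The aim is to produce unit vectors $\xi_{n}$ in the GNS Hilbert space with $\langle\xi_{n},\pi(A)\xi_{n}\rangle=\|A_{\La_{n}}\|$. Since $\|A_{\La_{n}}\|\nearrow\|A\|$ was already established and $\|\pi(A)\|\leq\|A\|$ is automatic, this will force $\sup\s(\pi(A))=\|\pi(A)\|=\|A\|$.

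The first step is to compute two traces. Because $\G$ acts freely and $F$ is a fundamental domain, the map $(x,\g)\mapsto\g^{-1}x$ is a bijection $F\times\G\to VX$, hence $\sum_{x\in F}\sum_{\g}f(\g^{-1}x)=\sum_{y\in VX}f(y)$ for any non-negative $f$. Applying this to $f(y)=|v_{n}(y)|^{2}$ yields
\begin{equation*}
    Tr_{\G}(T_{n})=\sum_{x\in F}\sum_{\g\in\G}|\langle\d_{x},\l(\g)v_{n}\rangle|^{2}=\|v_{n}\|^{2}=1.
\end{equation*}
Writing the matrix entries $T_{n}(y,x)=\sum_{\g}v_{n}(\g^{-1}y)\overline{v_{n}(\g^{-1}x)}$ and using the $\G$-invariance $A(\g u,\g w)=A(u,w)$, the same substitution collapses the double sum to
\begin{equation*}
    Tr_{\G}(AT_{n})=\sum_{u,w\in VX}A(u,w)v_{n}(w)\overline{v_{n}(u)}=\langle v_{n},Av_{n}\rangle=\|A_{\La_{n}}\|,
\end{equation*}
where the last equality uses that $v_{n}$ is supported in $V\La_{n}$ and is the Perron--Frobenius eigenvector of $A_{\La_{n}}$.

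Finally, since $T_{n}\geq 0$ is bounded with $Tr_{\G}(T_{n})=1<\infty$, the element $T_{n}^{1/2}$ represents a unit vector $\xi_{n}$ in the GNS Hilbert space of $Tr_{\G}$, and by the tracial property
\begin{equation*}
    \langle\xi_{n},\pi(A)\xi_{n}\rangle=Tr_{\G}(T_{n}^{1/2}AT_{n}^{1/2})=Tr_{\G}(AT_{n})=\|A_{\La_{n}}\|.
\end{equation*}
As $\pi(A)$ is bounded self-adjoint, $\sup\s(\pi(A))\geq\langle\xi_{n},\pi(A)\xi_{n}\rangle=\|A_{\La_{n}}\|$ for every $n$; letting $n\to\infty$ gives $\sup\s(\pi(A))\geq\|A\|$, which combined with the trivial bound $\sup\s(\pi(A))\leq\|\pi(A)\|\leq\|A\|$ proves the theorem. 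The only genuine work lies in the fundamental-domain bookkeeping in the two trace computations; the main conceptual input, namely the $\G$-equivariant assembly of the finite-volume Perron--Frobenius eigenvectors into an element of $\cn(X,\G)$ with finite trace, has already been supplied by the preceding lemma.
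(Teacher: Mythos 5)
Your proof is correct and follows essentially the same route as the paper: both hinge on the preceding lemma, which assembles the finite-volume Perron--Frobenius projections into $T_n=\sum_{\g\in\G}P_{v_n}(\g)\in\cn(X,\G)$, and then test $\pi(A)$ against the corresponding GNS vector to force $\sup\s(\pi(A))\ge\|A_{\La_n}\|\to\|A\|$. Your execution is in fact a little cleaner: you evaluate $Tr_\G(T_n)=1$ and $Tr_\G(AT_n)=\|A_{\La_n}\|$ exactly via the fundamental-domain bijection (where the paper only derives the inequality $Tr(E_mAP_{v_n})\ge\|A_n\|\,Tr(E_mP_{v_n})$ and then lets $m\to\infty$), and you use $\pi(T_n^{1/2})\xi_{\tau}$ as the unit vector, which is the technically correct choice since $T_n$ is a sum of projections onto non-orthogonal vectors and hence not itself a projection.
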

 \begin{proof}
    Let $v_{n}$ be the (normalised) Perron-Frobenius vector for the
    restriction $A_{n}$ of $A$ to the graph $\La_{n}$, and let
    $P_{v_{n}},P_{v_{n}}(\g)$ be as defined above.  Moreover, for any $m>n$, let us
    denote by $E_{m}$ the projection on $\ell^{2}(\La_{m})$.  Then
    \begin{align*}
	Tr(E_{m}AP_{v_{n}})
	&=\sum_{\g\in\G}Tr(E_{m}AP_{v_{n}}(\g))\\
	&=\sum_{\g\in\G} \langle \l(\g)v_n, E_{m}A\l(\g)v_n \rangle \\
	&\geq\sum_{\g\in\G} \langle
	\l(\g)v_n, E_{m}\l(\g)E_{n}\l(\g)^{*}A\l(\g)v_n \rangle \\
	&=\|A_{n}\|\sum_{\g\in\G} \langle \l(\g)v_n, E_{m}\l(\g)v_n \rangle \\
	&=\|A_{n}\|\sum_{\g\in\G}Tr(E_{m}P_{v_{n}}(\g))
	=\|A_n\| Tr(E_{m}P_{v_{n}}),
    \end{align*}
    where the inequality follows by the positivity of all the entries,
    and the last but one equality follows from the fact that
    $E_{n}\l(\g)^{*}A\l(\g)E_{n}=A_{n}$.  As a consequence,
    $$
    \|A_{n}\|\leq\frac{Tr(E_{m}AP_{v_{n}})}{Tr(E_{m}P_{v_{n}})}
    \to\frac{\t(AP_{v_{n}})}{\t(P_{v_{n}})}\leq\sup\s(\pi(A)).
    $$
    where, for the last inequality, we used the following: let
    $\xi_{\t}$ be the GNS vector, and $\eta = \frac{\pi(P_{v_{n}})
    \xi_{\t}} {\| \pi(P_{v_{n}}) \xi_{\t}\|}$, then $\eta$ is normalised,
    and $\frac{\t(AP_{v_{n}})}{\t(P_{v_{n}})}= \langle \eta,\pi(A)\eta \rangle$.  The
    thesis follows.
 \end{proof}
 
 \begin{Prop}
     Let $X$ be a periodic graph. Then $H:= \|A\|-A$ has finite critical 
     density $\iff$ $A$ is transient.
 \end{Prop}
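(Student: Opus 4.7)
The plan is to reduce both conditions to finiteness of $\tau(H^{-1})$ for the canonical trace $\tau=Tr_\G$ on $\cn(X,\G)$, and to exploit the explicit diagonal formula for $Tr_\G$ on a finite fundamental domain.

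First I would observe that since $X$ is periodic, $H=\|A\|-A$ belongs to $\cn(X,\G)$, so the spectral projections $E_H[0,\l]$ are in $\cn(X,\G)$ and $N_H(\l)=\t(E_H[0,\l])$ by definition. The trace $\t=Tr_\G$ is normal on $\cn(X,\G)$. Hence by monotone convergence applied to the increasing family $f_\eps(h):=h^{-1}\chi_{[\eps,\|H\|]}(h)\nearrow h^{-1}$, one obtains
\begin{equation*}
\int_{[0,\|H\|]}\frac{dN_H(h)}{h}\;=\;\sup_{\eps>0}\t(f_\eps(H))\;=\;\t(H^{-1}),
\end{equation*}
where $H^{-1}$ is the positive (in general unbounded) operator affiliated with $\cn(X,\G)$ defined by functional calculus. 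Thus finiteness of the critical density, characterized in \eqref{a2} by finiteness of the left-hand side, is equivalent to $\t(H^{-1})<+\infty$.

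Second, I would use the explicit form $Tr_\G(T)=\sum_{x\in F}T(x,x)$ for $T\in\cn(X,\G)^+$, extended by monotone convergence to the affiliated positive operator $H^{-1}$, to get
\begin{equation*}
\t(H^{-1})\;=\;\sum_{x\in F}(\d_x,H^{-1}\d_x).
\end{equation*}
Since $F$ is a finite fundamental domain (as $\G$ acts cofinitely), this sum is finite if and only if each summand is finite. Moreover $H^{-1}$ commutes with the regular representation $\l(\G)$, so $\G$-invariance yields $(\d_{\g x},H^{-1}\d_{\g x})=(\d_x,H^{-1}\d_x)$; since $\G F=VX$, finiteness on $F$ is equivalent to finiteness on all of $VX$, which by definition is transience of $H=\|A\|-A$. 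Combining the two equivalences gives the claim.

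The only technical hurdle is the passage from continuous functional calculus (as in Proposition \ref{traceconv}) to the unbounded function $h\mapsto 1/h$, but this is handled cleanly by normality of $Tr_\G$ on $\cn(X,\G)$ and monotone convergence; everything else is a direct bookkeeping argument using periodicity.
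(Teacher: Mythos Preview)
Your proof is correct and follows essentially the same route as the paper: reduce finite critical density to $\t(H^{-1})<\infty$, write this as $\sum_{x\in F}(\d_x,H^{-1}\d_x)$ over the finite fundamental domain, and then identify finiteness of the summands with transience. The only cosmetic difference is that the paper invokes the standard connectivity fact ``finite for some $x$ $\iff$ finite for all $x$'' rather than your $\G$-invariance argument, and it does not spell out the monotone-convergence justification you provide.
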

 \begin{proof}
     The critical density of $H$ is
     finite $\iff \t(H^{-1}) = \int_{0}^{\infty} \frac{dN_{H}(\l)}{\l}
     <\infty$.  Observe that $\t(H^{-1}) = \sum_{x\in F} \langle
     \d_{x},H^{-1}\d_{x} \rangle$, and recall that $\langle
     \d_{x},H^{-1}\d_{x} \rangle <\infty$ for some $x\in X$ $\iff
     \langle \d_{x},H^{-1}\d_{x} \rangle <\infty$ for all $x\in X \iff
     A$ is transient.
 \end{proof}

\section{The secular equation}
\label{secsec}

 Our aim here is to show that additive perturbations of an essentially
 periodic graph can lead to hidden spectrum for the adjacency matrix
 of the perturbed graph.  As explained above this is relevant for the
 occurrence of BEC condensation for the pure hopping model.  As a
 starting point for this analysis we write down an eigenvalue
 equation, called secular equation, for the adjacency matrix in
 terms of objects associated to the unperturbed graph. 
 
 We start by considering very general perturbations of the graph under consideration, then we specialize the matter to the case of interest for our purposes.
 Let $X,G$ be bounded degree graphs. We suppose we are adding and/or removing links from the graph $X$. Suppose further that $G$ is (possibly) attached to $X$, describing another perturbation of the latter. 
Let $Y$ be the resulting graph. Its adiacency matrix $A_{p}$ can be written as
\begin{equation} \label{1}
     A_{p}=
     \begin{pmatrix} 
	 A +D &C \\
	 C^{t} & B\\
     \end{pmatrix}\,,
 \end{equation} 
 where $A$, $B$ are the adjacency matrices of $X$, $G$ respectively,
 $D$ describes the contribution of the links added and/or removed from $X$, and finally $C$ describes the edges linking $G$ to $X$.  
 In the following $R_{T}(\l):=(\l I-T)^{-1}$ will denote the resolvent
 of the operator $T$ defined for $\l\not\in\s(T)$.  From now on we suppose that the resulting graph $Y$ is of bounded degree, this implies that $C$ and $D$ are bounded operators.
Define for $\l\not\in\s(A)\bigcup \s(B)$,
 \begin{equation*} 
    S(\l):= \big(DR_{A}(\l)+CR_{B}(\l)C^{t}R_{A}(\l)\big)
    \lceil_{\overline{\car(C)+\car(D)}}\,.
\end{equation*}

In the case under consideration,
 $S(\l)=PS(\l)P$ where $P$ is the orthogonal projection on the
closed subspace generated by the ranges of $C$ and $D$, on which we suppose $S$ naturally acts.
\begin{Thm} \label{eqsec}
With the above notation, $\l\not\in \s(A)\bigcup \s(B)$ is an eigenvalue of $A_{p}$ iff $1$ is
     an eigenvalue of $S(\l)$.  If this is the case, the corresponding
     eigenvectors $v:=\begin{pmatrix}x\\ y\\
     \end{pmatrix}$, respectively $z$, are related by
\begin{equation*}
z=Dx+Cy\,,
 \end{equation*}
     \begin{equation*}
	 x=R_{A}(\l)z\,,\quad y=R_{B}(\l)C^{t}R_{A}(\l)z\,.
     \end{equation*}
 \end{Thm}
 \begin{proof}
     Let $\l\not\in\s(A)\bigcup \s(B)$, and suppose there is $v\in\ell^2(Y)$ such that $A_{p}v=\l v$.  By \eqref{1}, we
     recover from the first equation
    \begin{equation} \label{3}
	 x=R_{A}(\l)(Dx+Cy)\,,
    \end{equation}
     and, by multiplying both sides by $D$,
    \begin{equation} \label{3a}
	 Dx=DR_{A}(\l)(Dx+Cy)\,.
    \end{equation}

     Analogously, from the second equation, we obtain
    \begin{equation*} 
	y=R_{B}(\l)C^{t}x\,,
    \end{equation*}
     and from \eqref{3}, by multiplying both sides by $C$,
     \begin{equation} \label{4a}
	 Cy=CR_{B}(\l)C^{t}x=CR_{B}(\l)C^{t}R_{A}(\l)(Dx+Cy)\,.
    \end{equation}

     Summing up \eqref{3a} and \eqref{4a}, we obtain that $z=Dx+Cy$ is
     an eigenvector of $S(\l)$ corresponding to the eigenvalue $1$. 
     Conversely, suppose that $z$ is an eigenvector of $S(\l)$ with
     eigenvalue $1$, and $\l\not\in \s(A)\bigcup \s(B)$.  Define
     $v:=\begin{pmatrix}R_{A}(\l)z\\ R_{B}(\l)C^{t}R_{A}(\l)z\\
     \end{pmatrix}$.  Then, it is easy to show that $v$ is an
     eigenvector of $A_{p}$ with eigenvalue $\l$.
 \end{proof}

 The equation
 \begin{equation} 
\label{5}
     [DR_{A}(\l)+CR_{B}(\l)C^{t}R_{A}(\l)]z=z\,.
 \end{equation}
 is called the {\it secular equation} in the present paper. It allows to compute the Perron--Frobenius eigenvalue of $A_p$ in many cases of interest, including some infinite, density zero, additive perturbations of periodic graphs.\footnote{Compare the computations in Section \ref{sec:CombGraphs} with those in \cite{BCRSV} used to prove the existence of the hidden spectrum for the comb graph.}
 
 Now we specialize the matter to the case of finite additive perturbations of a (essentially) periodic graph. In this case,  $B,C,D$ are finite rank operators, with $D$ positivity preserving, and acting on a finite dimensional subspace of $\ell^{2}(X)$. Thus, $S(\l)$ is a finite dimensional matrix whenever it is defined.
 
 We observe that in principle, $\|A_p\|$ might not be an eigenvalue of $A_p$, even if it is always the maximum of $\s(A_p)$ (cf. the existence of generalized Perron--Frobenius eigenvectors, see \cite{Seneta}). However,  if $\|A_p\| > \max\set{\|A\|,\|B\|}$, next result shows that $\|A_p\|$ is indeed an eigenvalue of $A_p$.
 
 \begin{Cor}
     Let $Y$ be a finite perturbation of $X$, and $A,B,C,D$ as above.  If
     $\|A_p\| > \max\set{\|A\|,\|B\|}$ then $1$ is an eigenvalue of
     $S(\|A_p\|)$ and $\|A_p\|\in\s_p(A_p)$.
 \end{Cor}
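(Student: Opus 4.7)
The plan is to derive the corollary directly from Theorem \ref{eqsec}: that theorem establishes, for $\l\notin\sigma(A)\cup\sigma(B)$, the equivalence between $\l\in\sigma_p(A_p)$ and $1\in\sigma_p(S(\l))$. Since $A$ and $B$ are bounded self-adjoint, the hypothesis $\|A_p\|>\max\{\|A\|,\|B\|\}$ already places $\|A_p\|$ outside $\sigma(A)\cup\sigma(B)$, so the whole content of the corollary reduces to the claim $\|A_p\|\in\sigma_p(A_p)$, which I would establish in two independent steps.

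First, because $Y$ is a finite perturbation of $X$, the blocks $B$, $C$, $D$ in \eqref{1} are of finite rank, hence $A_p-(A\oplus B)$ is a finite rank self-adjoint (in particular compact) operator on $\ell^2(VX)\oplus\ell^2(VG)$. Weyl's theorem on the invariance of the essential spectrum under compact perturbations then gives
\[
\sigma_{\mathrm{ess}}(A_p)=\sigma_{\mathrm{ess}}(A\oplus B)\subseteq\sigma(A)\cup\sigma(B),
\]
so every spectral point of $A_p$ lying strictly above $\max\{\|A\|,\|B\|\}$ is automatically an isolated eigenvalue of finite multiplicity. Thus, once we know that $\|A_p\|\in\sigma(A_p)$, it will follow that $\|A_p\|\in\sigma_p(A_p)$.

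Second, I would show $\|A_p\|\in\sigma(A_p)$ via the Perron--Frobenius/exhaustion machinery already employed in Section \ref{PF}. Since $Y$ has bounded degree, that argument transfers verbatim: for an exhaustion $\{\Lambda_n\}$ of $Y$, let $w_n$ be the normalised Perron--Frobenius eigenvector of the finite truncation $(A_p)_{\Lambda_n}$, extended by zero. As in Section \ref{PF}, $\|(A_p)_{\Lambda_n}\|\nearrow\|A_p\|$, and $\langle w_n,A_p w_n\rangle=\|(A_p)_{\Lambda_n}\|$. Self-adjointness of $A_p$ then yields
\[
\sup\sigma(A_p)=\sup_{\|v\|=1}\langle v,A_p v\rangle\geq\lim_n\langle w_n,A_p w_n\rangle=\|A_p\|,
\]
and since the reverse inequality is automatic, $\|A_p\|=\sup\sigma(A_p)\in\sigma(A_p)$ (the spectrum being compact).

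Combining the two steps gives $\|A_p\|\in\sigma(A_p)\setminus\sigma_{\mathrm{ess}}(A_p)=\sigma_p(A_p)$, and Theorem \ref{eqsec} then produces $1\in\sigma_p(S(\|A_p\|))$ with a corresponding eigenvector given by \eqref{zeta}. The most delicate ingredient is the second step: for a generic bounded self-adjoint operator $T$ only one of $\pm\|T\|$ need belong to $\sigma(T)$, and the identification of the correct $+$ sign as a genuine spectral value depends essentially on the non-negativity of the matrix entries of $A_p$, i.e.\ on the Perron--Frobenius property for infinite graphs.
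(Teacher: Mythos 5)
Your argument is correct, but it is genuinely different from the paper's. The paper proves the corollary constructively: it exhausts $Y$ by finite connected subgraphs $Y_n$, applies Theorem \ref{eqsec} at each finite level to get secular vectors $z_n=Dx_n+Cy_n$ which are eigenvectors of $S_n(\l_n)$ with eigenvalue $1$, observes that all the $z_n$ live in one fixed finite--dimensional space so that the pointwise limits $z_n\to z$ and $S_n(\l_n)\to S(\|A_p\|)$ can be taken there, and then applies Theorem \ref{eqsec} a second time, in the converse direction, to promote the limiting generalized Perron--Frobenius vector to a true $\ell^2$ eigenvector of $A_p$. Your route instead isolates the single claim $\|A_p\|\in\sigma_p(A_p)$ and gets it from Weyl's theorem ($\sigma_{\mathrm{ess}}(A_p)=\sigma_{\mathrm{ess}}(A\oplus B)\subseteq\sigma(A)\cup\sigma(B)$, since the off-diagonal and perturbation blocks are finite rank) combined with the Perron--Frobenius identity $\sup\sigma(A_p)=\|A_p\|$, which you correctly trace back to the nonnegativity of the entries via the exhaustion argument of Section \ref{PF}; Theorem \ref{eqsec} is then invoked only once, in the forward direction. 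What your approach buys: it bypasses the convergence of the finite-volume eigenvectors entirely (in particular you never have to worry that the limiting secular vector $z$ might vanish, a point the paper's proof passes over silently), and you get for free that $\|A_p\|$ is an \emph{isolated} eigenvalue of \emph{finite multiplicity}. What the paper's approach buys: an explicit eigenvector with strictly positive entries, obtained as the limit of the finite-volume Perron--Frobenius vectors, which is exactly what is exploited in the subsequent corollary (where the positivity of $v$ is used to identify $\l_0=\|A_p\|$) and in the comb-graph computations. Both proofs rest on the same implicit connectivity hypothesis on $Y$ needed for $\|(A_p)_{\La_n}\|\nearrow\|A_p\|$, so no additional assumption is being smuggled in.
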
 
 \begin{proof}
     Let $Y_n$ be an exhaustion of $Y$ such that $Y_1\supset
     G\cup\{x:\d_x\in\car(C)+\car(D)\}$,
     $\|A_{Y_1}\|>\max\set{\|A\|,\|B\|}$, and set $X_n=Y_n\cap X$.  By
     the results of Section \ref{PF} we may also assume that the
     Perron-Frobenius vectors $v_n=\begin{pmatrix}x_n\\
     y_n\\\end{pmatrix}$ for $A_{Y_n}$, normalized by taking value 1
     on a fixed vertex of $Y_1$, converge pointwise to a generalized
     Perron-Frobenius vector $v=\begin{pmatrix}x\\ y\\ \end{pmatrix}$
     for $A_Y$.  Applying Theorem \ref{eqsec} to any inclusion
     $X_n\subset Y_n$, we obtain that $z_n=Dx_n+Cy_n$ is an
     eigenvector with eigenvalue 1 for the matrix $S_n(\l_n):=
     \big(DR_{A_{X_n}}(\l_n)+CR_{B}(\l_n)C^{t}R_{A_{X_n}}(\l_n)\big)
     \lceil_{\car(C)+\car(D)}$, with $\l_n=\|A_{Y_n}\|$.  Let us
     observe
     that the vectors $z_n$ belong to the same finite-dimensional
     vector space, on which all the matrices $S_n(\l_n)$ act.  By
     construction, $\lim_n z_n=z:=Dx+Cy$ and $\lim_n S_n(\l_n)=
     S(\|A_Y\|)$, namely $z$ is an eigenvector with eigenvalue 1 of
     the matrix $S(\|A_Y\|)$.  Applying again Theorem \ref{eqsec} we
     show that $v$ is a true eigenvector of $A_Y$ with eigenvalue
     $\|A_Y\|$.
 \end{proof}

 We describe two particular cases of \eqref{5} when
 $D=0$,\footnote{Notice that the matrix $C^{t}R_{A}(\l)C$ is nonnull if
the graph $Y$ is supposed to be connected.}\label{eqsecfn}
\begin{equation} \label{5a}
    R_{B}(\l)C^{t}R_{A}(\l)Cy=y\,,\quad x=R_{A}(\l)Cy\,;
 \end{equation}
 and when $G=\emptyset$,
\begin{equation} \label{5b}
     DR_{A}(\l)z=z\,,\quad x=R_{A}(\l)z\,.
\end{equation}

 \begin{Cor}
     Let $Y$ be a finite perturbation of $X$, and $A,B,C,D$ as above. Assume that $\|B\|\geq\|A\|$, or $\|B\|<\|A\|$ and $A$ is recurrent. 
     Then $\|A_{p}\|>\max\set{\|A\|,\|B\|}$ and $\|A_{p}\|$ is an
     eigenvalue of $A_{p}$.
 \end{Cor}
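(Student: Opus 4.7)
My plan is to reduce the statement to producing \emph{any} eigenvalue of $A_{p}$ strictly greater than $M:=\max\{\|A\|,\|B\|\}$; once such an eigenvalue exists, then $\|A_{p}\|\geq \lambda^{*}>M$ and the preceding corollary immediately gives that $\|A_{p}\|$ itself is an eigenvalue. To locate one, I would work through the secular equation. For $\lambda>M$ the series $R_{A}(\lambda)=\sum_{k\geq 0}\lambda^{-k-1}A^{k}$ and likewise $R_{B}(\lambda)$ have non-negative entries, so $S(\lambda)=DR_{A}(\lambda)+CR_{B}(\lambda)C^{t}R_{A}(\lambda)$ is a finite-dimensional, entrywise non-negative matrix on $\overline{\car(C)+\car(D)}$. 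By Perron--Frobenius, its spectral radius $\rho(\lambda):=\rho(S(\lambda))$ is an eigenvalue of $S(\lambda)$, is continuous in $\lambda$, and tends to $0$ as $\lambda\to\infty$. By the intermediate value theorem, it will suffice to show that $\rho(\lambda)\to+\infty$ as $\lambda\downarrow M$; then some $\lambda^{*}>M$ satisfies $\rho(\lambda^{*})=1$, Theorem \ref{eqsec} promotes $\lambda^{*}$ to an eigenvalue of $A_{p}$, and the preceding corollary closes the argument.

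To establish this blow-up when $G\neq\emptyset$, I would use the connectedness of $Y$ to pick a vertex $g_{0}\in VG$ lying in a connected component of $G$ of maximal norm $\|B\|$ and adjacent to some $x_{0}\in VX$, so that $C_{x_{0}g_{0}}=1$. Monotonicity of the spectral radius for non-negative matrices gives $\rho(\lambda)\geq \rho(CR_{B}(\lambda)C^{t}R_{A}(\lambda))$, and by the standard identity $\s(T_{1}T_{2})\cup\{0\}=\s(T_{2}T_{1})\cup\{0\}$ this equals $\rho(R_{B}(\lambda)C^{t}R_{A}(\lambda)C)$ on $\ell^{2}(VG)$. The bound $\rho(M)\geq M_{ii}$, valid for any non-negative matrix $M$ (a $1\times1$ principal submatrix), then gives, after keeping only the indices $g=g_{0}$ and $x=x'=x_{0}$ in the defining sums,
\begin{equation*}
\rho(\lambda)\;\geq\;(R_{B}(\lambda))_{g_{0}g_{0}}\,(C^{t}R_{A}(\lambda)C)_{g_{0}g_{0}}\;\geq\;(R_{B}(\lambda))_{g_{0}g_{0}}\,(R_{A}(\lambda))_{x_{0}x_{0}}.
\end{equation*}
In Case 1 ($\|B\|\geq\|A\|$), $(R_{A}(\lambda))_{x_{0}x_{0}}\geq\lambda^{-1}$ stays bounded away from zero, while $(R_{B}(\lambda))_{g_{0}g_{0}}\to+\infty$ as $\lambda\downarrow\|B\|$, since $B$ is a finite matrix with Perron--Frobenius eigenvalue $\|B\|$ and associated eigenvector strictly positive at $g_{0}$. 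In Case 2 ($\|B\|<\|A\|$ with $A$ recurrent) the r\^oles swap: $(R_{B}(\lambda))_{g_{0}g_{0}}$ is bounded and strictly positive for $\lambda$ near $\|A\|$, whereas $(R_{A}(\lambda))_{x_{0}x_{0}}\uparrow(\delta_{x_{0}},(\|A\|-A)^{-1}\delta_{x_{0}})=+\infty$ as $\lambda\downarrow\|A\|$ by monotone convergence applied to the spectral measure of $A$ at $\delta_{x_{0}}$, which is exactly the recurrence hypothesis.

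The degenerate case $G=\emptyset$ falls into Case 2 automatically, and only $DR_{A}(\lambda)$ survives in $S(\lambda)$. Picking $x_{0},y_{0}\in VX$ with $D_{x_{0}y_{0}}>0$ (which exists since $D\neq 0$), the same principal-submatrix bound gives $\rho(\lambda)\geq (DR_{A}(\lambda))_{x_{0}x_{0}}\geq D_{x_{0}y_{0}}(R_{A}(\lambda))_{y_{0}x_{0}}$; iterating the harmonic identity $(\lambda-A)R_{A}(\lambda)\d_{x_{0}}=\d_{x_{0}}$ along a path of length $d:=d_{X}(x_{0},y_{0})$ on the non-negative vector $R_{A}(\lambda)\d_{x_{0}}$ yields $(R_{A}(\lambda))_{y_{0}x_{0}}\geq\lambda^{-d}(R_{A}(\lambda))_{x_{0}x_{0}}$, which again diverges by recurrence. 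The main conceptual obstacle I anticipate is Case 2: one must translate the global analytic hypothesis of recurrence of $A$ into the pointwise divergence of a single diagonal resolvent entry, and one must verify that the chain of monotonicity and cyclic-invariance inequalities for the spectral radius of $S(\lambda)$ actually brings matters down to that entry without losing the divergence.
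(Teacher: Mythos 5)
Your proof is correct and follows essentially the same route as the paper: an intermediate-value argument for a scalar invariant of $S(\l)$ on $(\max\set{\|A\|,\|B\|},+\infty)$, combined with Theorem \ref{eqsec} and the preceding corollary. The only differences are that you work with the spectral radius $\rho(S(\l))$ rather than $\|S(\l)\|$ (the cleaner choice for invoking Perron--Frobenius on a non-symmetric nonnegative matrix), and you actually supply the diagonal-resolvent estimates proving the divergence as $\l\downarrow\max\set{\|A\|,\|B\|}$, which the paper's proof merely asserts.
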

 \begin{proof}
     The function $\l\in(\max\set{\|A\|,\|B\|},+\infty)\mapsto
     \|S(\l)\|$ is decreasing and tends to $0$ when $\l\to+\infty$. 
     When $\|B\| \geq \|A\|$, $\displaystyle
     \lim_{\l\to\|B\|}\|S(\l)\|=+\infty$.  When $\|B\|<\|A\|$,
     $\displaystyle \lim_{\l\to\|A\|}\|S(\l)\|=+\infty$ if and only if
     $A$ is recurrent.  So, in both cases, there exists a unique
     $\l_0>\max\set{\|A\|,\|B\|}$ such that $\|S(\l_0)\|=1$.  Since
     $S(\l_0)$ has positive entries, 1 is an eigenvalue, whose
     eigenvector $z$, the Perron-Frobenius eigenvector, has positive
     entries.  Applying Theorem \ref{eqsec} we get an eigenvector $v$
     of $A_p$, for the eigenvalue $\l_0$, having positive entries. 
     This immediately implies $\|A_p\|\geq\l_0>\max\set{\|A\|,\|B\|}$,
     therefore, by the previous Corollary, $\|A_p\|$ is an eigenvalue,
     whose eigenvector $v'$ has positive entries.  Then
     $\l_0=\|A_p\|$ and $v=v'$.
 \end{proof} 
 
 We end the present section by presenting a formula, which is needed
 in the  sequel, which describes $R_{A_{p}}(\l)$ in terms of the resolvents $R_A$ and $R_B$.
  
 \begin{Prop}\label{Resolvent}
 Let $Y$ be a finite perturbation of $X$, and $A,B,C,D, S(\l)$ as above. Consider $\l\in\bc$ such that
 $|\l|>\|A_{p}\|$, and choose $v=\begin{pmatrix}x\\ y\\  \end{pmatrix}$ in $\ell^2(Y)$. Then
 \begin{equation}\label{eq:Resolvent}
R_{A_{p}}(\l)v =
\begin{pmatrix}
R_{A}(\l)(x+z)\\
R_{B}(\l)(C^{t}R_{A}(\l)x+y+C^{t}R_{A}(\l)z)
\end{pmatrix},
 \end{equation}
 where $z=(I-S(\l))^{-1} \big( (DR_{A}(\l) + CR_{B}(\l)C^{t}R_{A}(\l))x + CR_{B}(\l)y \big)$.
 
 If $|\l|$ is sufficiently large, the formula holds also for infinite, additive perturbations with  density zero. 
 \end{Prop}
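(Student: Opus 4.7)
The proof is a direct verification: we solve the equation $(\lambda I - A_p)w = v$ block-wise and recognize the answer. Write $w = \begin{pmatrix} x'\\ y'\end{pmatrix}$, so the two block equations read
\begin{align*}
(\lambda - A - D) x' - C y' &= x,\\
-C^t x' + (\lambda - B) y' &= y.
\end{align*}
Since $|\lambda| > \|A_p\| \geq \max\{\|A\|, \|B\|\}$, both $R_A(\lambda)$ and $R_B(\lambda)$ exist. From the second equation one solves $y' = R_B(\lambda)(C^t x' + y)$. Substituting into the first and multiplying by $R_A(\lambda)$ yields
\[
x' = R_A(\lambda) x + R_A(\lambda)\bigl[D x' + C R_B(\lambda) C^t x' + C R_B(\lambda) y\bigr].
\]

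The key move is the ansatz $x' = R_A(\lambda)(x+z)$, i.e. $z := (\lambda-A)x' - x$. Plugging this back, cancelling the common $R_A(\lambda)x$ term, and applying $(\lambda - A)$ on the left gives the linear equation
\[
(I - S(\lambda)) z = \bigl(D R_A(\lambda) + C R_B(\lambda) C^t R_A(\lambda)\bigr) x + C R_B(\lambda) y,
\]
with $S(\lambda)$ as in \eqref{2}. Once $(I - S(\lambda))^{-1}$ is available, this determines $z$, hence $x' = R_A(\lambda)(x+z)$, and then $y' = R_B(\lambda)(C^t R_A(\lambda)(x+z) + y)$, which is exactly \eqref{eq:Resolvent}. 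The algebra is then reversible, confirming that the expression on the right of \eqref{eq:Resolvent} is indeed $R_{A_p}(\lambda)v$.

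The only non-routine point is the invertibility of $I - S(\lambda)$. In the finite perturbation case $S(\lambda)$ is a finite-dimensional matrix, so it is enough to check $1 \notin \sigma_p(S(\lambda))$. This follows from Theorem \ref{eqsec}: if $1$ were an eigenvalue of $S(\lambda)$, then $\lambda$ would be an eigenvalue of $A_p$, contradicting $|\lambda| > \|A_p\| \geq \sup\sigma(A_p)$ (recall that $A_p$ is self-adjoint). In the case of infinite density-zero perturbations, $S(\lambda)$ need not be finite rank, so we instead appeal to a Neumann series argument: for $|\lambda|$ large enough the estimate
\[
\|S(\lambda)\| \leq \frac{\|D\|}{|\lambda|-\|A\|} + \frac{\|C\|^2}{(|\lambda|-\|A\|)(|\lambda|-\|B\|)} < 1
\]
holds, so $(I - S(\lambda))^{-1} = \sum_{k\geq 0} S(\lambda)^k$ exists as a bounded operator. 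The main conceptual hurdle, therefore, is just ensuring this invertibility in the appropriate regime; the algebraic manipulation itself is mechanical once the ansatz $x' = R_A(\lambda)(x+z)$ is introduced.
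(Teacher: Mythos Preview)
Your proof is correct and follows essentially the same approach as the paper: the paper simply asserts that ``the result follows from a straightforward calculation'' once $I-S(\lambda)$ is invertible, and then proves invertibility exactly as you do---via Theorem~\ref{eqsec} and finite-dimensionality in the finite case, and via $\|S(\lambda)\|\to0$ (your explicit Neumann bound makes this quantitative) in the infinite case. You have merely written out the block-elimination that the paper leaves to the reader.
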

 \begin{proof}
 If we show that $I-S(\l)$ is invertible, the result follows from a straightforward calculation. If the perturbation is finite, $1\not\in\s_p(S(\l))$, otherwise, by Theorem \ref{eqsec}, $\l$ would belong to $\s_p(A_p)$, against the hypothesis $|\l|>\|A_{p}\|$. Since $S(\l)$ is a finite dimensional matrix, this means $1\not\in\s(S(\l))$, i.e. $I-S(\l)$ is invertible.  In the case of infinite perturbation, observe that $S(\l)\to0$ when $|\l|\to\infty$, and the thesis follows.
 \end{proof}
 
\begin{Rem}\label{SimpleCase}
Assume that the perturbation consists only of some extra edges, without adding vertices; then $C=B=0$, and the result above becomes
$$
R_{A_p}(\l)=R_A(\l)+R_A(\l)(I-DR_A(\l)|_{\car(D)})^{-1}DR_A(\l).
$$
In order to have the result for infinite perturbations it suffices that $|\l|>\|A\|+\|D\|$.
\end{Rem}

\section{Perturbations of periodic graphs}

The present section is devoted to some general results involving density zero perturbations of (essentially) periodic graphs.

 \begin{Prop} \label{smallPerturb}
 
     Let $X$ be an essentially periodic graph, and $Y$ a density zero perturbation
     of $X$.  Suppose that $\|A_{Y}\|>\|A_X\|$.  Then $Y$ has hidden
     spectrum.  In addition,
     \begin{equation} \label{rcfu}
	\r^Y_{c}(\b)=\r^{X}(\b,\m)
    \end{equation}
     where $\m=\|A_X\|-\|A_{Y}\|<0$.
 \end{Prop}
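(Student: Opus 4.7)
The plan is to exploit the invariance of the spectral distribution of the adjacency matrix under density-zero perturbations, which is the content of Proposition \ref{density0}, in order to transfer the analysis of $A_Y$ back to $A_X$; both claims then reduce to elementary changes of variable.

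First, Proposition \ref{density0} gives $Tr_\G(\f(A_Y)) = Tr_\G(\f(A_X))$ for every continuous $\f$. Applied to $\f(a) = \psi(\|A_Y\|-a)$, this yields
\begin{equation*}
\int\psi\,dN_{H_Y} = Tr_\G\bigl(\psi(H_Y)\bigr) = \int\psi(\|A_Y\|-a)\,dN_{A_X}(a)
\end{equation*}
for every continuous $\psi$, where $N_{A_X}$ denotes the spectral distribution of $A_X$ under $Tr_\G$. In particular, $\supp(N_{H_Y})$ is the image of $\supp(N_{A_X})$ under $a\mapsto\|A_Y\|-a$, and since $\supp(N_{A_X})\subseteq\s(A_X)\subseteq[-\|A_X\|,\|A_X\|]$,
\begin{equation*}
E_m(H_Y) = \inf\supp(N_{H_Y}) = \|A_Y\|-\sup\supp(N_{A_X}) \geq \|A_Y\|-\|A_X\| = -\m > 0,
\end{equation*}
while $E_0(H_Y)=0$ as noted in the Remark following \eqref{a1}. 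This establishes the existence of hidden spectrum.

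For the critical density, taking $\psi(h)=\frac{1}{e^{\b h}-1}$ in the displayed identity above and then substituting $\|A_Y\|=\|A_X\|-\m$ gives
\begin{equation*}
\r_c^Y(\b) = \int\frac{dN_{A_X}(a)}{e^{\b(\|A_Y\|-a)}-1} = \int\frac{dN_{A_X}(a)}{e^{\b(\|A_X\|-a-\m)}-1} = \int\frac{dN_{H_X}(h)}{e^{\b(h-\m)}-1} = \r^X(\b,\m),
\end{equation*}
where in the last step one undoes the change of variable $h=\|A_X\|-a$.

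The main technical ingredient is the invariance $Tr_\G(\f(A_Y))=Tr_\G(\f(A_X))$ provided by Proposition \ref{density0}; everything else is bookkeeping. The key qualitative point is that although the density-zero perturbation may increase the operator norm so that $\|A_Y\|>\|A_X\|$, it cannot enlarge the support of the spectral measure beyond that of $A_X$, and this mismatch is precisely what produces the positive gap $-\m$ at the bottom of $\supp(N_{H_Y})$.
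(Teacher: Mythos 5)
Your proof is correct and follows essentially the same route as the paper: both rest on the trace invariance $Tr_\G(\f(A_Y))=Tr_\G(\f(A_X))$ from Proposition \ref{density0}, deduce hidden spectrum from the fact that the spectral distribution of $A_Y$ is therefore supported in $[-\|A_X\|,\|A_X\|]$ (the paper phrases this via a mollifier showing $N_{H}([0,\eps])=0$ for $\eps<\|A_Y\|-\|A_X\|$, you via the pushforward of supports — the same observation), and obtain \eqref{rcfu} by the identical change of variables.
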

 \begin{proof}
     Let $g_{n}(x)$ be a continuous mollifier equal to $1$ if
     $x\leq\eps$ and $0$ if $x\geq\eps+1/n$.  We have, with $H:=
     \|A_{Y}\| - A_{Y}$ and $\eps<\|A_{Y}\|-\|A_X\|$,
     $$
     N_{H}([0,\eps])=\lim_{n}\t\big(g_{n}(\|A_{Y}\|-A_{Y})\big)
     =\lim_{n}\t\big(g_{n}(\|A_{Y}\|-A_X)\big)=0.
     $$
     As for the critical density, we have
     \begin{align*}	 
	 \r_{c}^{Y}(\b) & \equiv \t
	 \bigg(\big(e^{\b(\|A_{Y}\|I-A_{Y})}-1\big)^{-1} \bigg) =
	 \t\bigg(\big(e^{\b(\|A_{Y}\|I-A_X)}-1\big)^{-1}\bigg)\\
	 & = \t\bigg(\big(e^{\b(\|A_X\|I-A_X-\m I)}-1\big)^{-1}\bigg)
	 \equiv\r^{X}(\b,\m)\,.
     \end{align*}
 \end{proof}

Notice that \eqref{rcfu} allows us to compute the critical density of the perturbed network by using the formula for the density of the unperturbed one. It is very interesting for physical applications, to compare such a BEC critical density (equivalently critical temperature) with the critical density (temperature) of the formation of the Baarden--Cooper pairs in the Josephson junctions.

 We now consider finite subtractive perturbations of essentially periodic graphs.

 \begin{Thm} \label{subct}
     Let $Y$ be the graph obtained by removing a finite number of
     vertices and links from an essentially periodic graph $X$ which does not have hidden spectrum.  Then 
     
     \itm{i} $Y$ does not have hidden spectrum,
     
     \itm{ii} the critical densities of $X$ and $Y$ are equal.  
 \end{Thm}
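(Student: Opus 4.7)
The plan is to reduce everything to the invariance of the spectral distribution under finite perturbations. Since $Y$ is obtained from $X$ by removing finitely many vertices and edges, I would embed both $X$ and $Y$ into a common ambient graph $Z$ as in the setup preceding Definition \ref{def:amenableGraph}, and view $A_X$ and $A_Y$ as elements of $\ca(Z)$. Then $A_X-A_Y$ is a finite rank operator, hence essentially zero, so the finite-perturbation form of Proposition \ref{density0} yields
$$
Tr_\G \bigl( \f(A_X) \bigr) = Tr_\G \bigl( \f(A_Y) \bigr)
$$
for every continuous $\f$ on $\br$. In particular, $N_{A_X}=N_{A_Y}$, and consequently $\s(\pi_\tau(A_X))=\s(\pi_\tau(A_Y))$, where $\pi_\tau$ denotes the GNS representation associated to the trace.

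The crux is to upgrade this into the equality $\|A_X\|=\|A_Y\|$. For the upper bound $\|A_Y\|\leq\|A_X\|$, I would use that any finite subgraph $K$ of $Y$ is also a subgraph of $X$, so the matrix entries satisfy $A_{Y,K}\leq A_{X,K}$ entrywise; Perron--Frobenius on finite non-negative symmetric matrices then yields $\|A_{Y,K}\|\leq\|A_{X,K}\|\leq\|A_X\|$, and passing to an exhaustion via the results of Section \ref{PF} gives $\|A_Y\|\leq\|A_X\|$. For the reverse bound, I would invoke the hypothesis that $X$ has no hidden spectrum to write $\|A_X\|=\sup\s(\pi_\tau(A_X))$; combining with the identity of spectra above,
$$
\|A_X\|=\sup\s(\pi_\tau(A_Y))\leq\|\pi_\tau(A_Y)\|\leq\|A_Y\|.
$$
This proves $\|A_X\|=\|A_Y\|$ and simultaneously $\sup\s(\pi_\tau(A_Y))=\|A_Y\|$, which is the absence of hidden spectrum for $Y$, establishing $(i)$.

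For $(ii)$, set $H_X:=\|A_X\|I-A_X$ and $H_Y:=\|A_Y\|I-A_Y$. Since $\|A_X\|=\|A_Y\|$, pushing $N_{A_X}=N_{A_Y}$ forward by the map $\l\mapsto\|A_X\|-\l$ yields $N_{H_X}=N_{H_Y}$, and hence $\r_c^X(\b)=\r_c^Y(\b)$ by the definition \eqref{dc}. The main obstacle I expect is the proof of $\|A_X\|=\|A_Y\|$: the upper bound $\|A_Y\|\leq\|A_X\|$ is routine Perron--Frobenius comparison, but the reverse inequality genuinely requires both the trace invariance under finite rank perturbations and the no-hidden-spectrum hypothesis on $X$, because without it one only controls $\sup\s(\pi_\tau(A_X))$, which can be strictly less than $\|A_X\|$.
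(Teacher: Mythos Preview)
Your proof is correct and follows essentially the same route as the paper: both arguments use trace invariance under finite perturbations to identify the spectral distributions, invoke the elementary inequality $\|A_Y\|\leq\|A_X\|$ for subgraphs, and then use the no-hidden-spectrum hypothesis on $X$ to force $\|A_X\|=\sup\s(\pi_\tau(A_X))=\sup\s(\pi_\tau(A_Y))\leq\|A_Y\|$. The paper phrases the chain of inequalities in terms of the quantity $E_m$ rather than $\sup\s(\pi_\tau(\cdot))$, but the content is identical.
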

 \begin{proof}
     $(i)$ Since $X$ does not have hidden spectrum, and finite perturbations do not change
     the trace $\t$, it suffices to show that $\|A_{Y}\|=\|A_{X}\|$. 
     It is known that $\|A_{Y}\|\leq\|A_{X}\|$.
     
     We obtain
     \begin{align*}
	 0&\leq \|A_{X}\|-\|A_{Y}\| \leq \|A_{X}\|-\|A_{Y}\| + 
	 E_{m}(\|A_{Y}\|-A_{Y})\\
	 & = E_{m}(\|A_{X}\|-A_{Y}) = 
	 E_{m}(\|A_{X}\|-A_{X}) = 0.
     \end{align*}
     Therefore $\|A_{Y}\|=\|A_{X}\|$ and $E_{m}(\|A_{Y}\|-A_{Y}) = 
     0 = E_{0}(\|A_{Y}\|-A_{Y})$, which is the claim.

     $(ii)$ Since $N_{Y}=N_{X}$, we obtain
     $\r_{Y}(\b,\m)=\r_{X}(\b,\m)$, and $\r^Y_c(\b)=\r^X_c(\b)$.
\end{proof}    

 Notice that Theorem \ref{subct} holds true for zero density
 subtractive perturbations.  In addition, it tells us that zero
 density subtractive perturbations do not alter the character of an
 essentially periodic graph, provided the graph under consideration does not exhibit hidden spectrum
 (e.g. a periodic graph).  Theorem \ref{subct} generalizes a result in
 \cite{M}.

 We now use the results of the previous section to show that very
 small additive perturbations of essentially periodic graphs provide
 examples of pure hopping low dimensional models with finite critical
 density.

 \begin{Prop} \label{Prop:pert}
     Let $X$ be an essentially-periodic graph with infinite critical
     density.  Then, there exists a point $x_{0}\in X$ such that if we
     add to $X$ only one vertex $\#$ linked to $x_{0}$, then the graph
     $X\cup\set{\#}$ has finite critical density.
 \end{Prop}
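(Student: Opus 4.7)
The plan is to apply the secular equation of Theorem~\ref{eqsec} to the simplest additive perturbation — adjoining one new vertex $\#$ connected by a single edge — and then invoke Proposition~\ref{smallPerturb} to turn the resulting strict inequality $\|A_Y\|>\|A_X\|$ into hidden spectrum, hence into finite critical density.

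First, I would single out the vertex $x_0\in VX$ to which $\#$ is to be attached. Since $N_{H_X}(\{0\})=0$, monotone convergence in $\eps\downarrow 0$ identifies the critical density as $\t(H_X^{-1})=\int dN_{H_X}(h)/h=+\infty$. On the other hand, by the regular-exhaustion definition of $\t$ applied to $(H_X+\eps)^{-1}$ and a second monotone-convergence interchange in $\eps$ and $n$, one has
$$
\t(H_X^{-1})=\lim_n\frac{1}{|V\La_n|}\sum_{x\in V\La_n}\langle\d_x,H_X^{-1}\d_x\rangle\,,
$$
so the exhaustion average of diagonal matrix elements diverges; in particular the maximum of $\langle\d_x,H_X^{-1}\d_x\rangle$ over $VX$ is unbounded, and some $x_0\in VX$ satisfies $\langle\d_{x_0},H_X^{-1}\d_{x_0}\rangle>\|A_X\|$.

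Second, set $Y:=X\cup\{\#\}$ with the single new edge $\{\#,x_0\}$. In the block decomposition of Section~\ref{secsec}, $G=\{\#\}$ gives $B=0$ and $D=0$, while $C\colon\bc\d_\#\to\ell^2(VX)$ is the rank-one map $C\d_\#=\d_{x_0}$. Then $\overline{\car(C)+\car(D)}=\bc\d_{x_0}$ is one-dimensional, and a direct computation yields
$$
S(\l)\d_{x_0}=\l^{-1}\langle\d_{x_0},R_{A_X}(\l)\d_{x_0}\rangle\,\d_{x_0},\qquad\l>\|A_X\|.
$$
Thus the secular equation $S(\l)z=z$ reduces to the scalar condition $\langle\d_{x_0},R_{A_X}(\l)\d_{x_0}\rangle=\l$. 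The function $\l\mapsto\l^{-1}\langle\d_{x_0},R_{A_X}(\l)\d_{x_0}\rangle$ is continuous and strictly decreasing on $(\|A_X\|,+\infty)$, tends to $0$ as $\l\to+\infty$, and tends to $\|A_X\|^{-1}\langle\d_{x_0},H_X^{-1}\d_{x_0}\rangle>1$ as $\l\downarrow\|A_X\|$; by the intermediate value theorem there is a unique $\l_0>\|A_X\|$ solving it. Theorem~\ref{eqsec} then identifies $\l_0\in\s_p(A_Y)$, whence $\|A_Y\|\geq\l_0>\|A_X\|$.

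Finally, $Y$ is a finite (hence density-zero) additive perturbation of $X$, when $X$ is viewed on the enlarged vertex set with $\#$ isolated, so Proposition~\ref{smallPerturb} applies and gives $Y$ hidden spectrum; by \eqref{a2} this implies $\r^Y_c(\b)<+\infty$. The main obstacle, in my view, is the first step: the trace $\t$ of the possibly unbounded positive operator $H_X^{-1}$ must be identified with the exhaustion average of diagonal matrix elements through a monotone-convergence interchange in $\eps$ and $n$. Once the bound $\langle\d_{x_0},H_X^{-1}\d_{x_0}\rangle>\|A_X\|$ is secured for some $x_0$, the remainder is a clean one-dimensional secular computation combined with Proposition~\ref{smallPerturb}.
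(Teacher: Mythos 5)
Your proposal is correct and follows essentially the same route as the paper: locate $x_{0}$ with $\langle\d_{x_{0}},R_{A_X}(\|A_X\|)\d_{x_{0}}\rangle>\|A_X\|$ by an averaging argument from the divergence of $\t(H_X^{-1})$, solve the rank-one secular equation by monotonicity and the intermediate value theorem, and conclude hidden spectrum (hence finite critical density) from $\|A_Y\|>\|A_X\|$. The only caveat is that your first step requires only the inequality $\liminf_n\frac{1}{|V\La_n|}\sum_{x\in V\La_n}\langle\d_x,H_X^{-1}\d_x\rangle\geq\t(H_X^{-1})$, which is what the monotone approximation actually yields (and is precisely the paper's Fatou step), rather than the full equality you assert.
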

 
 \begin{proof}
 Denote by $A$ the adjacency matrix of $X$. The secular equation
     \eqref{5a} for $X\cup\{\#\}$ becomes $\l^{-1}\langle
     \d_{x_{0}}, R_{A}(\l)\d_{x_{0}} \rangle=1$, $x_{0}$ being the
     vertex (to be determined) of $X$ to which $\#$ is connected.  We
     will show that there exists $\l>\|A\|$ satisfying the previous
     equation.  
     
     Denote by $H=\|A\|-A$, and observe that $X$ has infinite critical
     density $\iff \infty = \int_{0}^{\|H\|} \frac{dN_{H}(\l)}{\l} =
     \int_{0}^{\infty} \frac{d\t(E_{H}(\l))}{\l} = \int_{0}^{\infty}
     \frac{d\t(E_{A}(\|A\|-\l))}{\l} = \int_{-\infty}^{\|A\|}
     \frac{d\t(E_{A}(\n))}{\|A\|-\n} \iff \int_{0}^{\|A\|}
     \frac{d\t(E_{A}(\l))}{\|A\|-\l} = \infty$.
     
     Therefore,
     \begin{align*}
	 \infty = & \int_0^{+\infty}\frac{dN_{A}(a)}{\|A\|-a}
	 = \int_0^{+\infty} \bigg(\sum_{n=0}^{+\infty}\|A\|^{-(n+1)}a^{n}\bigg)dN_{A}(a)\\
	 = & \sum_{n=0}^{+\infty}\|A\|^{-(n+1)}\int_0^{+\infty} a^{n}dN_{A}(a)
	 \equiv\sum_{n=0}^{+\infty}\|A\|^{-(n+1)}\t(A^{n})\\
	 \equiv&\sum_{n=0}^{+\infty}\|A\|^{-(n+1)}\lim_{\La_{k}\uparrow
	 X} \bigg(\frac{1}{|\La_{k}|}\sum_{x\in\La_{k}} \langle
	 \d_{x}, A^{n}\d_{x} \rangle\bigg)\\
	 \leq&\liminf_{\La_{k}\uparrow X}
	 \bigg(\frac{1}{|\La_{k}|}\sum_{x\in\La_{k}}
	 \bigg(\sum_{n=0}^{+\infty}\|A\|^{-(n+1)} \langle \d_{x},
	 A^{n}\d_{x} \rangle\bigg)\bigg)\,.
     \end{align*}

     Here, the second equality follows by the monotone convergence
     Theorem, and the last inequality by the Fatou Lemma.

     Then there exists $x_{0}\in X$ such that
     $$
     \langle \d_0, R_A(\|A\|)\d_0 \rangle =\sum_{n=0}^{+\infty}\|A\|^{-(n+1)} 
     \langle \d_{x_{0}}, A^{n}\d_{x_{0}} \rangle >\|A\|\,.
     $$

     This means that, for the decreasing function $\l^{-1}\langle \d_{x_{0}},
     R_{A}(\l)\d_{x_{0}} \rangle$,
     \begin{align*}
	 &\lim_{\l\to+\infty}\l^{-1}\langle \d_{x_{0}},
	 R_{A}(\l)\d_{x_{0}} \rangle=0\,,\\
	 &\lim_{\l\to\|A\|}\l^{-1} \langle\d_{x_{0}}, 
	 R_{A}(\l)\d_{x_{0}} \rangle>1\,.
     \end{align*}

     Namely, there exists a (unique) $\l>\|A\|$ such that the secular
     equation for $X\cup\{\#\}$ is satisfied, or by Theorem
     \ref{eqsec}, there is a (unique) $\l>\|A\|$ which is an
     eigenvalue of $A_{p}$, which implies $\|A_{p}\|\geq \l > \|A\|$. 
     Therefore, $E_{m}(\|A_{p}\|-A_{p}) = E_{m}(\|A_{p}\|-A) =
     \|A_{p}\|-\|A\| + E_{m}(\|A\|-A) \geq \|A_{p}\|-\|A\|>0$, that is
     $X\cup\{\#\}$ exhibits low energy hidden spectrum.
 \end{proof}

 \begin{Thm} \label{pert}
     Let $X$ be a essentially-periodic graph.  Then, there exists a point
     $x_{0}\in X$ such that if we add to $X$ only one vertex $\#$
     linked to $x_{0}$, then the graph $Y:=X\cup\set{\#}$ verifies
     $\|A_Y\|>\|A_X\|$.  This implies $Y$ has hidden spectrum, hence
     finite critical density.
 \end{Thm}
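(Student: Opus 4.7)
The plan is to reduce the theorem to the existence, for some vertex $x_0\in X$, of an eigenvalue of $A_Y$ strictly above $\|A_X\|$, and then to invoke Proposition \ref{smallPerturb}. Once $\|A_Y\|>\|A_X\|$ is established, Proposition \ref{smallPerturb} applied to the finite (hence density-zero) perturbation $Y$ of $X$ will give hidden spectrum for $Y$ and therefore finite critical density.

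To produce the eigenvalue I specialize the secular equation \eqref{5a} to the situation $G=\{\#\}$. With $B=0$ and $C=|\d_{x_0}\rangle\langle\d_\#|$ the equation collapses to the scalar identity
\begin{equation*}
G_{x_0}(\l):=\langle\d_{x_0},R_{A_X}(\l)\d_{x_0}\rangle=\l.
\end{equation*}
By Theorem \ref{eqsec}, any solution $\l>\|A_X\|$ of this identity is an eigenvalue of $A_Y$ whose corresponding eigenvector has strictly positive entries, so $\|A_Y\|\geq\l>\|A_X\|$.

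The analytic shape of $G_{x_0}$ makes the criterion transparent: on $(\|A_X\|,+\infty)$ the map $\l\mapsto G_{x_0}(\l)$ is continuous, strictly decreasing, and tends to $0$ at infinity, while the right-hand side is linear and equals $\|A_X\|$ at the left endpoint. A solution $\l>\|A_X\|$ therefore exists precisely when $G_{x_0}(\|A_X\|^+)>\|A_X\|$, so the heart of the proof is the selection of a vertex $x_0\in X$ for which this inequality holds.

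To produce such a vertex I plan to adapt the Fatou-type computation in the proof of Proposition \ref{Prop:pert}: expanding $G_{x_0}(\l)$ as the Neumann series $\sum_{n\geq 0}\l^{-n-1}\langle\d_{x_0},A_X^n\d_{x_0}\rangle$, averaging over $x_0\in\La_k$, and interchanging summation with $\liminf$ through monotone convergence and Fatou relates the existence of a good $x_0$ to the quantity $\sum_n\|A_X\|^{-n-1}\t(A_X^n)=\lim_{\l\downarrow\|A_X\|}\t(R_{A_X}(\l))$. The main obstacle, and the genuinely new content beyond Proposition \ref{Prop:pert}, is treating the case in which $X$ already has finite critical density: then the preceding sum is finite and the averaging argument is inconclusive on its own. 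In that case I expect the essential-periodicity structure to be used more directly, either by selecting $x_0$ where the generalized Perron--Frobenius eigenvector from Section \ref{PF} concentrates, or by working with finite exhaustions $\La_n\cup\{\#\}$ and establishing that the gap $\|A_{\La_n\cup\{\#\}}\|-\|A_{\La_n}\|$ stays bounded below as $n\to\infty$.
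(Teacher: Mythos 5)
Your reduction of the problem to the scalar secular equation $\langle\d_{x_{0}},R_{A_X}(\l)\d_{x_{0}}\rangle=\l$ and your treatment of the case of infinite critical density coincide with the paper's Proposition \ref{Prop:pert}, and that part is fine. The genuine gap is exactly where you flag it: the case in which $X$ already has finite critical density is not proved, only two speculative directions are sketched. Moreover, the route you propose for that case cannot be completed, because the inequality $\|A_Y\|>\|A_X\|$ is simply false there in general. Take $X=\bz^{d}$ with $d\geq3$: by translation invariance $\langle\d_{x},R_{A_X}(\|A_X\|)\d_{x}\rangle$ is the same finite number for every $x$, and it is of order $(2d)^{-1}\sum_{n}n^{-d/2}$, far below $\|A_X\|=2d$; so $G_{x_{0}}(\|A_X\|^{+})>\|A_X\|$ fails for every vertex, the secular equation has no solution above $\|A_X\|$, and attaching a pendant vertex does not raise the norm. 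Neither concentration of the generalized Perron--Frobenius vector (which is constant on $\bz^{d}$) nor a uniform lower bound on the finite-volume gaps $\|A_{\La_n\cup\{\#\}}\|-\|A_{\La_n}\|$ (which tend to $0$ here) can rescue the argument.

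The paper resolves this by \emph{not} proving the norm inequality in that case: its proof splits on whether the critical density of $X$ is finite. If it is infinite, Proposition \ref{Prop:pert} gives the norm jump and hence hidden spectrum. If it is finite, the desired conclusion --- that $Y$ has finite critical density --- is immediate without any norm jump: either $\|A_Y\|=\|A_X\|$, in which case the critical densities of $X$ and $Y$ coincide because a one-vertex perturbation does not change the trace of continuous functions of $A$ (Proposition \ref{density0}), or $\|A_Y\|>\|A_X\|$, in which case Proposition \ref{smallPerturb} gives hidden spectrum. In other words, the statement of the theorem overclaims ($\|A_Y\|>\|A_X\|$ is not established, and cannot be, in the transient case), and the paper's proof in fact only delivers the final conclusion about finite critical density. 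You should restructure your argument along the same dichotomy rather than trying to force the norm inequality in all cases.
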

 \begin{proof}
     Assume the critical density of $X$ is finite. If 
     $\|A_{X}\| = \|A_{X\cup\set{\#}}\|$, then the critical densities 
     of $X$ and $X\cup\set{\#}$ are equal, so that  $X\cup\set{\#}$ has 
     finite critical density. If $\|A_{X}\| < \|A_{X\cup\set{\#}}\|$, 
     then $X\cup\set{\#}$ has hidden spectrum and finite critical density.
     
     If $X$ has infinite critical density the result follows from the
     Proposition above.
 \end{proof}
 
 \begin{Rem}\label{Rem:nail}
 \item{$(i)$} The proof of Proposition \ref{Prop:pert}
 is based on the fact that there exists $x_0$ such that $\langle
 \d_{x_0}, R_A(\|A\|)\d_{x_0} \rangle$ is large enough.  This is
 trivially true if $X$ is periodic, since in that case infinite
 critical density is equivalent to recurrence, namely $\langle \d_{x},
 R_A(\|A\|)\d_{x} \rangle =+\infty$ for any $x$.\footnote{See \cite{ABO} for results related to the BEC and the computation of  the "vacuum distribution" $d\m(\l)=\langle\d_0, dE_A(\l)\d_0 \rangle$ in some cases of interest in quantum probability, such as the comb graph.}
 
 \item{$(ii)$}
 Notice that, the divergence of the same integral
 $\int\frac{dN_{A}(a)}{\|A\|-a}$, on the one hand is responsible of
 the infinite critical density for the unperturbed graph $X$, on the
 other hand allows us to conclude that graphs obtained by considering
 very small additive perturbations of $X$ have finite critical
 density. 
 \end{Rem}

 Up to now we have shown that small perturbations of a graph can produce hidden spectrum to the pure hopping Hamiltonian. The remarkable fact, pointed out in the following theorem, is that such a network cannot exhibit hidden spectrum if we choose as Hamiltonian of the model the Laplace operator $\D$ of the graph.

 \begin{Thm} \label{lapl}
     Let $X$ be a periodic amenable graph, and $Y$ a density zero
     perturbation of $X$.  Let the Hamiltonian $H$ be the Laplace
     operator on $Y$.  Then $H$ does not have hidden spectrum.
 \end{Thm}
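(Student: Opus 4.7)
The plan is to reduce to the unperturbed periodic graph $X$ via a trace–invariance argument, and then exploit amenability of $\G$ to populate the low–energy support of the spectral distribution of $\D_X$.

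\textbf{Step 1 (Reduction to $X$).} I would first verify that $\D_Y-\D_X$ lies in the ideal of essentially–zero operators. The off–diagonal contribution $A_Y-A_X$ is essentially zero by the density–zero hypothesis. For the diagonal contribution $D_Y-D_X$, observe that $|\deg_Y(v)-\deg_X(v)|$ is bounded by the number of edges of $EX\triangle EY$ incident to $v$; squaring, summing over $v\in VK_n$, and using bounded degree bounds $Tr((D_Y-D_X)^2P_n)$ by a constant multiple of $|(EX\triangle EY)\cap EK'_n|$, which is $o(|VK_n|)$ by density zero. Hence $D_Y-D_X$, and therefore $\D_Y-\D_X$, lies in $\cai$. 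Repeating the polynomial–plus–Weierstrass argument of Proposition \ref{density0}, $\t(\f(\D_Y))=\t(\f(\D_X))$ for every continuous $\f$, so $N_{\D_Y}=N_{\D_X}$ and $E_m(\D_Y)=E_m(\D_X)$.

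\textbf{Step 2 ($E_m(\D_X)=0$ via amenability).} Fix $\eps>0$ and choose a continuous non–negative bump $\f$ with $\supp\f\subseteq[0,\eps]$ and $\f\equiv 1$ on $[0,\eps/2]$. By Proposition \ref{traceconv}(i), $\t(\f(\D_X))=\lim_n|V\La_n|^{-1}Tr(\f((\D_X)_{\La_n}))$, and the right–hand trace is bounded below by the number of eigenvalues of $(\D_X)_{\La_n}$ lying in $[0,\eps/2]$. To produce such eigenvalues with positive density, I exploit the amenability of $\G$: pick a finite $\G$–F\o lner set $S_0$ such that $K_0:=S_0F$ (with $F$ a fundamental domain) satisfies $|\partial_XK_0|/|K_0|<\eps/2$, so that $\chi_{K_0}/\sqrt{|K_0|}$ carries Dirichlet energy $<\eps/2$. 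Then pack pairwise distance–$\geq 2$ translates $\{\g K_0:\g\in\G_n\}\subseteq V\La_n$; a volume argument based on bounded degree and the F\o lner property of $\La_n$ yields $|\G_n|\geq c|V\La_n|$ for some $c=c(\eps)>0$ independent of $n$. The orthonormal family $v_\g:=\chi_{\g K_0}/\sqrt{|K_0|}$ satisfies $\langle v_{\g'},\D_Xv_\g\rangle=0$ for $\g\neq\g'$ (the distance separation kills both the hopping and diagonal cross–terms) while each diagonal entry is $<\eps/2$ by $\G$–invariance. Courant--Fischer then forces $(\D_X)_{\La_n}$ to have at least $|\G_n|$ eigenvalues in $[0,\eps/2]$, so $\t(\f(\D_X))\geq c>0$, and consequently $E_m(\D_X)\leq\eps$. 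Letting $\eps\downarrow 0$ and using $\D_X\geq 0$, we conclude $E_m(\D_X)=0$.

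\textbf{Conclusion and main obstacle.} Combining Steps 1 and 2, $E_m(\D_Y)=0$; since $\D_Y\geq 0$ forces $E_0(\D_Y)\geq 0$, and one always has $E_0\leq E_m$, this gives $E_0(\D_Y)=0=E_m(\D_Y)$, i.e., $\D_Y$ has no hidden spectrum. The main obstacle I anticipate is the packing step: ensuring that one can fit $c|V\La_n|$ well–separated $\G$–translates of $K_0$ inside $\La_n$ with a constant $c$ independent of $n$. This is a volume estimate relying only on bounded degree and the F\o lner property of $\La_n$, but it must be handled carefully so as to extract the uniform density of translates needed by the min–max argument.
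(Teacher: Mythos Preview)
Your proof is correct. Step~1 is essentially the paper's argument (the paper simply writes $E_m(\D_p)\equiv E_m(\D)$, implicitly invoking the same trace invariance), though you are more careful in spelling out that $D_Y-D_X\in\cai$, a point the paper glosses over since Proposition~\ref{density0} is stated only for adjacency matrices.

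Step~2 is where you genuinely diverge. The paper does not prove $E_m(\D_X)=0$ at all: it simply cites L\"uck's book on $L^2$-invariants, where the relevant fact is (a form of) the classical Kesten--Brooks amenability criterion, namely that $0$ lies in the $\ell^2$-spectrum of the Laplacian on any amenable covering. Your approach instead gives a direct, self-contained argument via F\o lner test functions and Courant--Fischer. What the paper's route buys is brevity; what yours buys is that the reader need not chase an external reference into the $L^2$-invariant literature, and the mechanism (positive density of low-energy states coming from packed F\o lner sets) is made explicit.

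Regarding your stated obstacle: the packing step is not a genuine difficulty. A greedy selection works: fix $K_0$ and let $T\subset\G$ be the finite set of $\g$ for which $\g K_0$ meets the $1$-neighbourhood of $K_0$; then iteratively pick elements of $E_n$ (the F\o lner set in $\G$ underlying $\La_n$), discarding $|T|$ elements at each step. This yields $|\G_n|\geq |E_n|/|T|\geq c\,|V\La_n|$ with $c=(|T|\,|F|)^{-1}$ independent of $n$. The restriction that $\g K_0$ lie entirely inside $\La_n$ only costs an $o(|V\La_n|)$ boundary correction by the F\o lner property, so the min--max conclusion goes through as you describe.
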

 \begin{proof}
     Let $\D$, and $\D_{p}$ be the Laplacian of $X$, and the
     perturbed graph $Y$, respectively.  We have
     $$
     0\leq E_{0}(\D_{p})\leq E_{m}(\D_{p})\equiv E_{m}(\D)=0\,,
     $$
     where the last equality follows by Theorem 2.55, (5) of \cite{Luck}.
 \end{proof}

\section{One dimensional examples}\label{ex}
 
 In this section, we exhibit some examples of graphs which have hidden spectrum. To prove that, we use proposition \ref{smallPerturb}, so we have to compare the norms of the adjacency operators of a graph and its perturbation. To compute the norms, we use Perron-Frobenius theory, and in particular the secular equation \eqref{5}. We start by considering the eigenvalue equation for the adjacency operator
 on a linear chain.  As we shall see, this gives rise to a difference
 equation whose solutions form a 2-dimensional space.  Therefore two
 more data, such as the value on boundary points, determine the
 solution for the given eigenvalue, and a further datum determines the
 eigenvalue.  In this way we can calculate the Perron-Frobenius
 eigenvector and eigenvalue on one-sided or two-sided linear chains
 with perturbations.  Also, we can compute the matrix elements of the
 resolvent $R_A(\l)$ for one-sided or two-sided linear chains. 
 Indeed, the vector $v=R_A(\l)\d_x$ satisfies the equation
 $(\l-A)v=\d_x$, namely the eigenvector equation with a perturbation. 
 We compute some examples below.
 
 \begin{Exmp} [Modified chain graphs]
 
 Suppose that the perturbation is on the left of a linear chain, and
 it determines the first components, denoted by $(\a_{0},\b_{0})$,  of an eigenvector corresponding to an
 eigenvalue $\l>2$ for the adjacency matrix.  Denote the other
 components on the right as $(\a_{1},\b_{1},\a_{2},\b_{2},\dots)$, see
 figure \ref{Fig1}.  The remaining components on the right are the
 solution of the finite--difference system
 \begin{align} \label{fds}
     \begin{pmatrix} 
	 \a_{n+1}\\
	 \b_{n+1}\\
     \end{pmatrix}
     =&    
     \begin{pmatrix} 
	 -1&\l\\
	 -\l& \l^{2}-1\\
     \end{pmatrix}
     \begin{pmatrix} 
	 \a_{n}\\
	 \b_{n}\\
     \end{pmatrix}.	
 \end{align}

     \begin{figure}[ht]
 	 \centering
	 \psfig{file=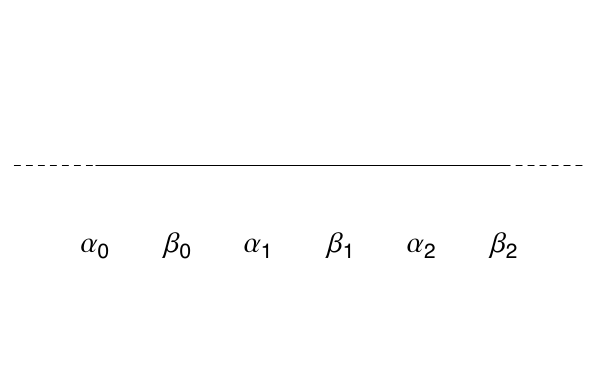,height=1.5in}
	 \caption{Infinite chain.}
	 \label{Fig1}
     \end{figure}

 The eigenvalues of the matrix in \eqref{fds} are
\begin{equation*} 
     \mu_{\pm}(\l)=\frac{\l^{2}-2\pm\l\sqrt{\l^{2}-4}}{2}\,,
 \end{equation*}
 with corresponding eigenvectors
 \begin{equation*} 
    v_\pm(\l):=
 \begin{pmatrix} 
    2\\
    \l\pm\sqrt{\l^{2}-4}
 \end{pmatrix}\,.
 \end{equation*}

 Now we apply the previous considerations to the graph $X$ in figure \ref{Fig2}.  Since the square-summable Perron-Frobenius eigenvector is unique, it is necessarily symmetric. So, we only search for symmetric eigenvectors of $A_X$.
   We have, in the previous
 notation,
\begin{equation}\label{eq1}
\l=\a_{0}\,,\quad\l\a_{0}=1+2\b_{0}\,.
 \end{equation}

 By taking into account that, in order to get a square summable vector
 on the chain, $
 \begin{pmatrix} 
     \a_{0}\\
     \b_{0}\\
 \end{pmatrix}$ cannot have a component along the eigenvector
 $v_{+}(\l)$, we obtain
 \begin{equation}\label{eq2}
 \begin{pmatrix} 
     \a_{0}\\
     \b_{0}\\
 \end{pmatrix}
 =a
 \begin{pmatrix} 
     2\\
     \l-\sqrt{\l^{2}-4}\\
 \end{pmatrix}\,.
 \end{equation}

 Solving \eqref{eq1} and \eqref{eq2} w.r.t. $\l$, we obtain $\l=\sqrt{2+\sqrt{5}}>2$. Since the eigenvector we found has only positive components, it is the Perron-Frobenius eigenvector of $A_X$. Therefore $\norm{A_X} = \l > 2 = \norm{A_\bz}$, and, by proposition \ref{smallPerturb}, the graph $X$ has hidden spectrum.
 
      \begin{figure}[ht]
 	 \centering
	 \psfig{file=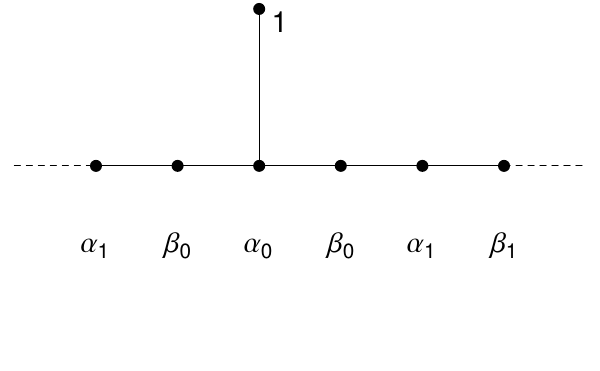,height=1.5in}
	 \caption{Infinite chain with a nail.}
	 \label{Fig2}
     \end{figure}

 Other examples can be solved along the same lines, as  those in figure \ref{Fig3}. They both have hidden spectrum.

      \begin{figure}[ht]
 	 \centering
	 \psfig{file=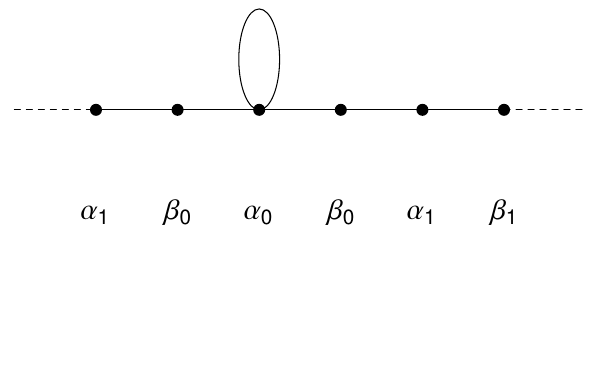,height=1.2in} \qquad \qquad
	 \psfig{file=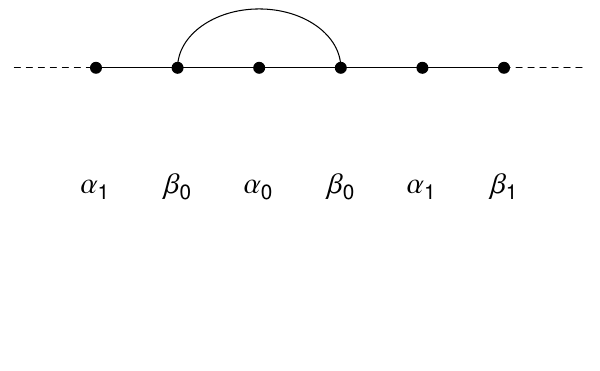,height=1.2in}
	 \caption{Some infinite graphs.}
	 \label{Fig3}
     \end{figure}
\end{Exmp}

 Another application of the previous method is to compute some matrix
 elements which are used in the sequel.

 \begin{Prop}
     We have for the following matrix elements,
     \begin{align} 
	 \langle \d_{0}, R_{A_{\bn}}(\l)\d_{0} \rangle =
	 \frac{2}{\l+\sqrt{\l^{2}-4}}\,, \label{mel}\\
	 \langle \d_{0}, R_{A_{box}}(\l)\d_{0} \rangle =
	 \frac{2}{\l+\sqrt{\l^{2}-8}}\, \label{mel1}
     \end{align}
     where $A_{box}$ is the adjacency matrix of the box graph in
     figure \ref{Fig4}.
 \end{Prop}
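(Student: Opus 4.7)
The plan is to compute each matrix element by solving the linear resolvent equation $(\l I - A)v = \d_0$ componentwise and reading off $v_0 := \langle \d_0, v\rangle$. For real $\l > \|A\|$, the operator $\l I - A$ is invertible, so $v$ is the unique $\ell^2$-solution, and the matrix element $\langle \d_0, R_A(\l)\d_0 \rangle$ equals $v_0$.

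For \eqref{mel} on the half-line $\bn$: setting $v_n := \langle \d_n, v \rangle$, the equation splits into the bulk three-term recurrence $\l v_n = v_{n-1} + v_{n+1}$ for $n \geq 1$ together with the boundary relation $\l v_0 - v_1 = 1$. The ansatz $v_n = c\mu^n$ in the bulk recurrence gives the characteristic equation $\mu^2 - \l\mu + 1 = 0$ with roots $\mu_\pm = (\l \pm \sqrt{\l^2 - 4})/2$ satisfying $\mu_+\mu_- = 1$; for $\l > 2 = \|A_\bn\|$ only $\mu_-$ has modulus less than one, so $\ell^2$-summability forces $v_n = v_0\mu_-^n$. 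Inserting into the boundary equation yields $v_0(\l - \mu_-) = 1$, and since $\l - \mu_- = (\l + \sqrt{\l^2 - 4})/2$, formula \eqref{mel} follows at once.

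For \eqref{mel1}, I would exploit the combinatorial structure of the box graph in Fig.~\ref{Fig4} to reduce its resolvent equation to an analogous effective one-dimensional problem. The key qualitative difference from the $\bn$ case is that the relevant characteristic equation becomes $\mu^2 - \l\mu + 2 = 0$, with roots $(\l \pm \sqrt{\l^2 - 8})/2$, rather than $\mu^2 - \l\mu + 1 = 0$ --- the extra factor of $2$ arising from the branching/symmetry of the box. Concretely, one can either perform a Schur-complement-type self-consistency argument (in the spirit of Proposition~\ref{Resolvent}) based on a self-similar decomposition of the box graph, yielding the algebraic identity $g = 1/(\l - 2g)$, i.e.\ $2g^2 - \l g + 1 = 0$, or equivalently carry out an explicit symmetry reduction of the componentwise equations that produces a weighted chain with doubled forward hopping. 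In either route the $\ell^2$-condition selects the smaller root, giving $v_0 = (\l-\sqrt{\l^2-8})/4$, and rationalization produces $2/(\l+\sqrt{\l^2-8})$ as claimed.

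The main technical point is identifying the correct effective one-dimensional system from the box-graph structure; once this is done, both the selection of the smaller characteristic root (hence the correct branch of the square root) and the computation of the resulting scalar are forced, the former by $\ell^2$-summability (which holds automatically for $\l > \|A_{box}\| = 2\sqrt{2}$) and the latter by the boundary relation at $\d_0$.
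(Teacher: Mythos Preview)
Your strategy --- solve $(\l I-A)v=\d_0$ componentwise, impose $\ell^2$-decay to select the contracting branch, then read off $v_0$ from the boundary relation at $0$ --- is exactly the paper's. For the box graph the paper packages the recursion as the $2\times2$ transfer matrix $\begin{pmatrix}-1&\l\\-\l/2&\l^2/2-1\end{pmatrix}$ acting on pairs $(\a_n,\b_n)$ (corner value, pair-vertex value), picks its contracting eigenvector $(4,\ \l-\sqrt{\l^2-8})$, and solves $\l\a_0-2\b_0=1$ for $\a_0$; your scalar formulation is the same computation in different clothing.

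One caveat on the box-graph specifics. The graph in Fig.~\ref{Fig4} is a chain of squares glued at opposite corners: the root $0$ has degree $2$, the interior corners have degree $4$, and the pair vertices have degree $2$. Consequently the naive one-step recurrence has alternating coefficients and does not directly produce $\mu^2-\l\mu+2=0$, and the self-consistency $g=1/(\l-2g)$ is the identity for the rooted binary tree rather than for this diamond chain --- it happens to give the same $\langle\d_0,R\,\d_0\rangle$, but that coincidence would itself require proof. The cleanest way to execute your ``symmetry reduction'' route is to note that the reflection swapping the two vertices in each pair is an automorphism fixing $\d_0$; on the symmetric subspace $A_{box}$ acts as $\sqrt{2}\,A_{\bn}$, so
\[
\langle\d_0,R_{A_{box}}(\l)\d_0\rangle=\tfrac{1}{\sqrt{2}}\,\langle\d_0,R_{A_{\bn}}(\l/\sqrt{2})\d_0\rangle
=\frac{2}{\l+\sqrt{\l^2-8}},
\]
recovering \eqref{mel1} directly from \eqref{mel}.
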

 \begin{proof}
     We compute the latter, the computation of the former being similar.  The one
     dimensional dynamical system (as that given in \eqref{fds})
     associated to the box graph is described by the matrix
     $$   
     \begin{pmatrix} 
	 -1&\l\\
	 -\frac{\l}{2}& \frac{\l^{2}}{2}-1\\
     \end{pmatrix}
     $$
     whose eigenvector associated to the negative eigenvalue has the form
     $$   
     \begin{pmatrix} 
	 4\\
	 \l-\sqrt{\l^{2}-8}\\
     \end{pmatrix}\,.
     $$

     We have 
     $$
     \begin{pmatrix} 
	 \a_{0}\\
	 \b_{0}\\
     \end{pmatrix}
     =a
     \begin{pmatrix} 
	 4\\
	 \l-\sqrt{\l^{2}-8}\\
     \end{pmatrix}\,,\quad\l\a_{0}-2\b_{0}=1.
     $$
     Solving w.r.t $\a_{0}\equiv\langle\d_{0},
     R_{A_{box}}(\l)\d_{0} \rangle$, provides the assertion.
 
     \begin{figure}[ht]
 	 \centering
	 \psfig{file=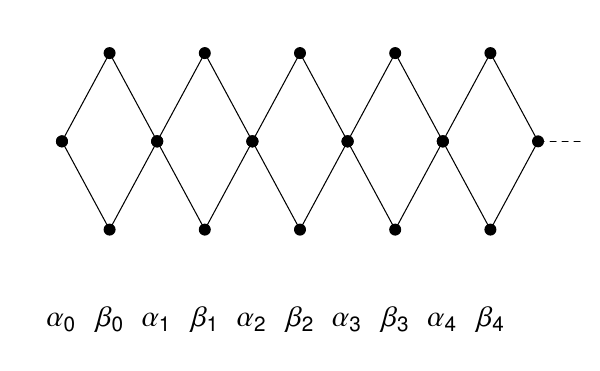,height=1.5in} 
	 \caption{Box graph.}
	 \label{Fig4}
     \end{figure}
 \end{proof}

\begin{Rem}
 By applying the same calculation as before, we obtain
 \begin{equation} \label{vzeta}
     \langle \d_{0}, R_{A_{\bz}}(\l)\d_{0}
    \rangle=\frac{1}{\sqrt{\l^{2}-4}}\,.
 \end{equation}
\end{Rem}

 Now we apply the previous results to compute the Perron--Frobenius
 eigenvalue and/or eigenvector of some pivotal examples in order to
 show that they exhibit low dimensional hidden spectrum.
 
 \begin{Rem}\label{N}
     Even though $\bn$ is not a finite perturbation of a periodic
     graph, the disjoint union of two copies of $\bn$, $\bn \sqcup
     \bn$, can be identified with the graph $\bz$ with one link
     removed.  The embedding of $\bn \sqcup \bn$ in $\bz$ gives rise
     to an embedding of pairs $(T,S)$ of operators in $\ca_{FP}(\bn)$
     into operators in $\ca_{FP}(\bz)$.  We may therefore define the
     C$^*$-algebra $\ca(\bn)$ as that consisting of the operators
     $T\in\ca_{FP}(\bn)$ such that $(T,T)\in\ca(\bz)$, endowed with
     the corresponding trace. This simple observation allows us to conclude that the critical density 
     $\r_c(\b)$ of the graph $\bn$, resp. the box--graph, is infinite as it coincides with that of $\bz$, resp. the bilateral box--graph (which is a $\bz$--lattice).
 \end{Rem}

 \begin{Exmp}[Star and star-box graphs]
     The star graph with $n\geq 3$ strands is composed by $n$ copies of
     $\bn$ all connected to a single vertex $\#$, see figure
     \ref{Fig5}, left.  If $\l>2 = \norm{A_\bn}$, the secular equation \eqref{5a} is written as
     \begin{equation} \label{c2}
	 \l=n\langle \d_{0}, R_{A_{\bn}}(\l)\d_{0} \rangle\,.
    \end{equation}

     By taking into account \eqref{mel}, we obtain for the adjacency
     matrix $A_{n}$ of the star graph with $n$ strands,
     $$
     \l\equiv\|A_{n}\|=\frac{n}{\sqrt{n-1}}\,.
     $$

     Therefore, the star graph has hidden spectrum,
     see \cite{BCRSV}.

    The star--box graph is made of $n\geq 3$
     copies of the box
    graph connected to a single vertex $\#$, see figure \ref{Fig5},
    right.  By taking into account \eqref{c2} and \eqref{mel1}, we have
    $\l=\frac{2n}{\l+\sqrt{\l^{2}-8}}$ which gives
    $$
    \l=\frac{n}{\sqrt{n-2}}\,, \quad n\geq 4.
    $$

    Since $\norm{A_{box}} = 2\sqrt{2}$,  the star--box graph has hidden spectrum
    iff $n\geq5$.

      \begin{figure}[ht]
 	 \centering
	 \psfig{file=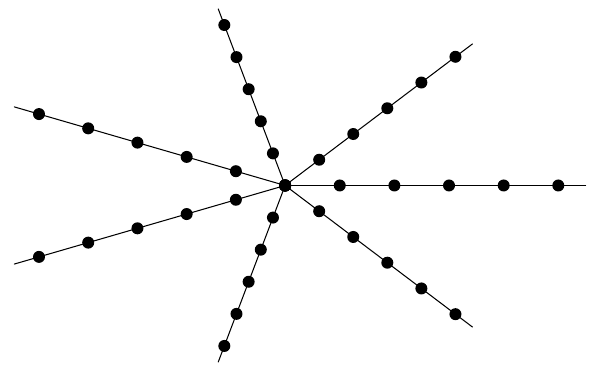,height=1.2in} \qquad \qquad
	 \psfig{file=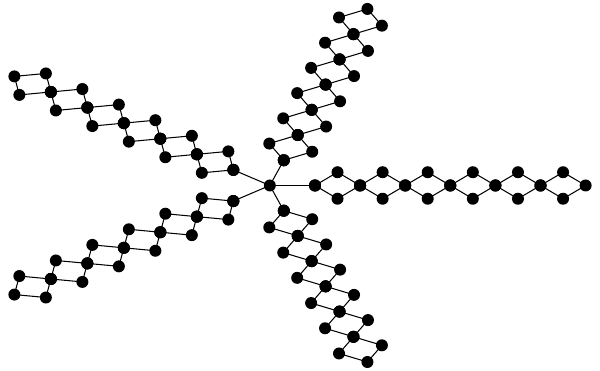,height=1.2in}
	 \caption{Star and star-box graphs.}
	 \label{Fig5}
     \end{figure}
\end{Exmp}

\begin{Exmp}[Polygonal star and star-box graphs]

    We now apply the secular equation \eqref{5b} to solve the star--box
    graph with $n$ strands obtained by connecting $n\geq 3$ copies of the box
    graph through a polygon, see figure \ref{Fig6}, right.  The
    vector $z$ in \eqref{5b} is supported on the vertices of the
    polygon having $n$ edges.  If $\l> \norm{A_{box}} =2\sqrt{2}$, the secular equation for such a 
    $z$
    living on the polygon, becomes
   \begin{equation} \label{c1}
	\langle \d_{0}, R_{A_{box}}(\l)\d_{0} \rangle A_{pol}z=z\,,
   \end{equation}
    where $A_{box}$, $A_{pol}$ are the adjacency matrices of the box
    graph and the polygon respectively.  Rotational invariance for the graph under consideration implies $z$ has equal components, hence
    $2\langle \d_{0}, R_{A_{box}}(\l)\d_{0} \rangle=1$, independently
    of the number of the edges of the polygon.  By taking into
    account \eqref{mel1}, the previous equation gives $\l=3>2\sqrt{2}\equiv\|A_{box}\|$. 
    Hence, the star--box graph has hidden spectrum.

     Another simple example of the star graph is that made of $n$
     strands connected by a polygon, see figure \ref{Fig6}, left. 
     By taking into account \eqref{c1} and \eqref{mel}, we have
     $\frac{4}{\l+\sqrt{\l^{2}-4}}=1$.  Namely,
     $\l=\frac{5}{2}>2\equiv\|A_{\bn}\|$, so that the graph has hidden
     spectrum.

       \begin{figure}[ht]
 	 \centering
	 \psfig{file=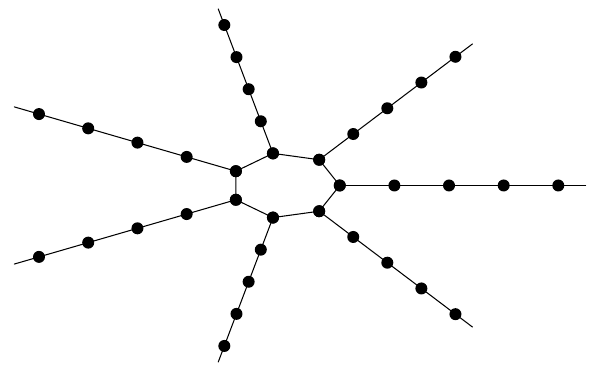,height=1.2in} \qquad \qquad
	 \psfig{file=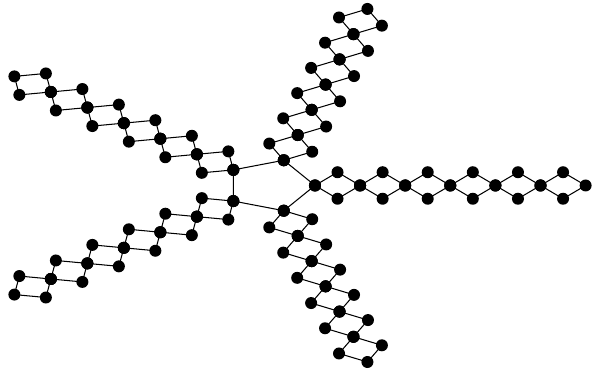,height=1.2in}
	 \caption{Polygonal star and star-box graphs.}
	 \label{Fig6}
     \end{figure}
\end{Exmp}

\begin{Exmp}[H-graphs]
We consider two copies of the bilateral infinite chain to which we
    add $k$ links between the two origins,  see figure \ref{Fig7a}. We call this graph an $H$-graph.  We
    have
    $$
    \a_{0}
    \begin{pmatrix} 
	2\\
	\l-k\\
    \end{pmatrix}
    =a\a_{0}
    \begin{pmatrix} 
	2\\
	\l+\sqrt{\l^{2}-4}\\
    \end{pmatrix}
    +b\a_{0}
    \begin{pmatrix} 
	2\\
	\l-\sqrt{\l^{2}-4}\\
    \end{pmatrix}\,.
    $$
    In order to have a square-summable eigenvector, we need $a=0$, from which we obtain
    $$
    \l=\sqrt{k^2+4}.
    $$
    Therefore, the H-graph has hidden spectrum as soon as $k>0$.
       \begin{figure}[ht]
 	 \centering
	 \psfig{file=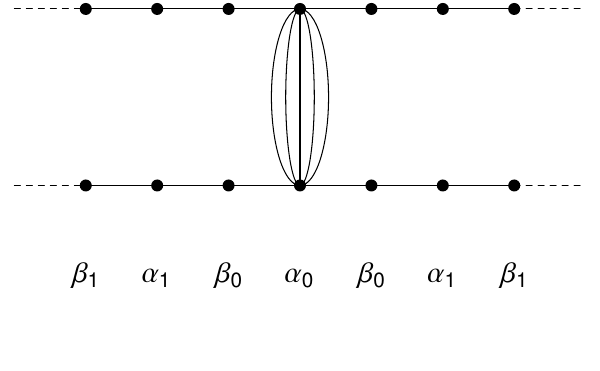,height=1.5in} 
	 \caption{H graphs.}
	 \label{Fig7a}
     \end{figure}
     \end{Exmp} 

\begin{Exmp}[Modified ladder graphs]
    In the previous examples, we considered additive and subtractive
    perturbations separately.  We now consider them together.

    We consider the bilateral ladder graph modified as follows.  We
    add $k-1$ links at the origin, and remove $2n$ links
    symmetrically, see figure \ref{Fig7b}. We look at a modified ladder graph as a graph containing a suitable $H$--graph as a subgraph.

 Since this graph contains the H-graph as a subgraph, $\norm{A_X}\geq \sqrt{k^2+4}$. Since the ladder graph ($i.e.$ for $k=1$ and $n=0$) has  $\norm{A_{ladder}} = 3$, the modified ladder graph $X$ has hidden spectrum for all $k\geq 3$, $n \geq 0$. Moreover, by theorem \ref{subct} it follows that  $X$ has no hidden spectrum for $k=0$ and any $n\geq 0$, or for $k=1$ and any $n\geq 1$. Finally, it is possible to prove that, for $k=2$, $X$ has hidden spectrum for $n=0$, and no hidden spectrum for $n\geq 1$.
 \begin{figure}[ht]
 	 \centering
	 \psfig{file=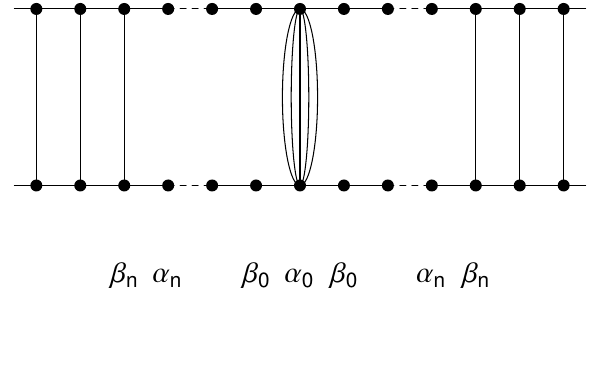,height=1.5in} 
	 \caption{Modified ladder graphs.}
	 \label{Fig7b}
     \end{figure} 
\end{Exmp}

\section{Comb graphs}\label{sec:CombGraphs}

 In \cite{BCRSV} the authors considered a graph, which they called
 comb graph, and showed that it has low energy hidden
 spectrum.

 In general, one can define the comb product between two graphs, as in
 the following definition (cf. \cite{ABO}).
 
 \begin{Dfn}
     Let $G$, $H$ be graphs, and let $o\in VH$ be a given vertex. 
     Then the comb product $X := G \comb (H,o)$ is a graph with $VX :=
     VG \times VH$, and $(g,h)$, $(g',h') \in VX$ are adjacent $iff$
     $g=g'$ and $h\sim h'$ or $h=h'=o$ and $g\sim g'$.  We call $G$ the {\it base graph}, and
     $H$ the {\it fibre graph}.  When $o\in H$ is understood from the
     context, we omit it, and write $G\comb H$.
 \end{Dfn}

 In this and the next section we shall consider the so called comb
 graphs, with base $\bz^d$, $d\in\bn$, and fibre $\bz$, with
 distinguished vertex $0\in\bz$, and we denote them simply by $\bz^d
 \comb \bz$.  As we shall see, the comb graphs exhibit a different
 behaviour with respect to BEC, if $d\leq 2$ or $d\geq 3$.

 The comb graph $\bz\comb \bz$ can be described as an additive
 perturbation of the disconnected graph given by $\bz$ copies of $\bz$
 (i.e. the fibres).  The perturbation consists of adding some extra
 links: for any $n\in\bz$, there is a link connecting the zero--point
 of the $n$-th copy to the zero--point of the $(n+1)$-th copy, see
 figure \ref{Fig8}.  The added links form a copy of $\bz$ which is
 usually called the backbone in the Physics literature.  We endow this
 graph with the regular exhaustion $\{\La_n\}_{n\in\bn}$, where
 $\La_n$ is the square $[-n,n]\times[-n,n]$. Here, and in the following, we denote by $[m,n] := \set{z\in \bz: m\leq z \leq n}$.
   In this sense the comb
 graph is a density zero perturbation of the disconnected graph given
 by infinitely many disjoint copies of $\bz$, indeed, according to
 Proposition \ref{density0}, it is sufficient to note that
   $$
   \frac{|(E(\bz\comb\bz)\setminus E(\sqcup_\bz\bz))\cap
   E\La_n|}{|V\La_n|}=\frac{2n+1}{(2n+1)^2}\to0.
   $$

 As already said, it was shown in \cite{BCRSV} that the comb graph $\bz\comb\bz$
 has low energy hidden spectrum.  We generalize this result
 as follows.
 
 \begin{Prop}
     The comb graph $G_{d}=\bz^d\comb\bz$ has low energy hidden
     spectrum.  In particular $\|A_{G_{d}}\|=2\sqrt{d^{2}+1}$.  The
     generalized Perron-Frobenius vector on $G_d$, obtained as
     point-wise limit of Perron-Frobenius vectors on $\La_n$, can be
     explicitly calculated.
 \end{Prop}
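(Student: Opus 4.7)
The plan is to exhibit a generalised Perron--Frobenius eigenvector of $A_{G_d}$ by means of the secular equation of Section~\ref{secsec}, read off the norm $\|A_{G_d}\|$, and then deduce hidden spectrum by viewing $G_d$ as a density--zero perturbation of the disjoint union of fibres $X:=\sqcup_{\bz^d}\bz$, so that Propositions~\ref{density0} and~\ref{smallPerturb} apply.

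First I would apply the secular equation \eqref{5b} with unperturbed operator $A=A_X=I\otimes A_{\bz}$ and perturbation $D=A_{\bz^d}\otimes P_0$, where $P_0$ is the orthogonal projection onto $\d_0\in\ell^2(\bz)$ (the added base edges live precisely on the layer $h=0$). Since $R_A(\l)=I\otimes R_{A_{\bz}}(\l)$, a vector $z=\f\otimes\d_0$ in the range of $D$ solves $DR_A(\l)z=z$ if and only if $\langle\d_0,R_{A_{\bz}}(\l)\d_0\rangle\,A_{\bz^d}\f=\f$. Using the value $\langle\d_0,R_{A_{\bz}}(\l)\d_0\rangle=(\l^2-4)^{-1/2}$ from \eqref{vzeta} and taking $\f\equiv 1$, the only bounded positive (generalised) eigenfunction of $A_{\bz^d}$ (eigenvalue $2d$), the equation reduces to $\sqrt{\l^2-4}=2d$, hence $\l=2\sqrt{d^2+1}$. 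Theorem~\ref{eqsec} then produces the generalised eigenvector $v=R_A(\l)z$ of $A_{G_d}$, whose components are
\[
v(g,h)=\m^{|h|},\qquad \m:=\sqrt{d^2+1}-d\in(0,1),
\]
and one checks directly the matching condition $\l=2\m+2d$ at $h=0$.

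To identify this $\l$ with $\|A_{G_d}\|$ I would argue both inequalities. The lower bound $\|A_{G_d}\|\geq\l$ is immediate from the existence of the positive bounded eigenvector $v$. For the upper bound I would apply a Collatz--Wielandt-type argument to the cube exhaustion $\La_n:=[-n,n]^{d+1}$: the restriction $v|_{\La_n}$ is strictly positive and satisfies $A_{\La_n}(v|_{\La_n})\leq (Av)|_{\La_n}=\l\,v|_{\La_n}$ entrywise (summing over a subset of neighbours), so the Perron--Frobenius theorem for finite nonnegative matrices forces $\|A_{\La_n}\|\leq\l$. Letting $n\to\infty$ and using $\|A_{G_d}\|=\lim_n\|A_{\La_n}\|$ from Section~\ref{PF} gives $\|A_{G_d}\|\leq\l$.

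Finally, for the hidden spectrum, I would note that $G_d$ is a density--zero perturbation of $X$, because the base edges inside $\La_n$ number $O(n^d)$ against a volume $|V\La_n|=\Theta(n^{d+1})$, so Proposition~\ref{density0} applies and $N_{A_{G_d}}=N_{A_X}$, whose support is contained in $\s(I\otimes A_{\bz})=[-2,2]$. Therefore
\[
E_m(\|A_{G_d}\|\,I-A_{G_d})=\|A_{G_d}\|-\sup\supp N_{A_{G_d}}=2\sqrt{d^2+1}-2>0
\]
for every $d\geq 1$, which is precisely low energy hidden spectrum. The main obstacle I expect is the upper bound on $\|A_{G_d}\|$: the secular equation only exhibits \emph{one} positive eigenvector, so one must rule out a strictly larger positive generalised eigenfunction; the Collatz--Wielandt step on the finite exhaustion is what closes this gap cleanly.
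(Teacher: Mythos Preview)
Your overall strategy---use the secular equation to exhibit the positive eigenvector, then invoke the density--zero perturbation machinery for hidden spectrum---is correct and close in spirit to the paper's, but there is a real gap in the step identifying $\l=2\sqrt{d^2+1}$ with $\|A_{G_d}\|$. You assert that the lower bound $\|A_{G_d}\|\geq\l$ is ``immediate from the existence of the positive bounded eigenvector $v$''. This is false in general: on the $3$--regular tree the constant function is a positive bounded generalised eigenvector with eigenvalue $3$, while $\|A\|_{\ell^2}=2\sqrt{2}<3$. In fact the inequality runs the other way: from $\l^n v(x)=(A^nv)(x)\geq A^n(x,x)\,v(x)$ one gets $\l\geq\limsup_n A^n(x,x)^{1/n}=\|A\|$ for connected graphs, so a positive generalised eigenvector only ever yields $\|A\|\leq\l$. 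Thus your Collatz--Wielandt step is redundant with the ``immediate'' direction, and the inequality you actually need has no proof. Without it you cannot even conclude $\|A_{G_d}\|>2$, which is precisely the hypothesis of Proposition~\ref{smallPerturb}. You flagged the upper bound as the main obstacle; it is the lower bound that is subtle.

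What makes the lower bound true here is not positivity or boundedness of $v$ but its specific shape: $v$ is constant along the amenable base $\bz^d$ and lies in $\ell^2$ of each fibre. The vectors $\psi_n(g,h)=\chi_{[-n,n]^d}(g)\,\m^{|h|}$ are then honest $\ell^2$ elements, and a F{\o}lner boundary estimate on the base gives $\|A\psi_n-\l\psi_n\|/\|\psi_n\|\to0$, so $\l\in\sigma(A_{G_d})$. The paper sidesteps this entirely by applying the secular equation at the finite level $\La_n=[-n,n]^d\comb[-n,n]$, where the Perron--Frobenius eigenvalue $\l_n$ automatically equals $\|A_{\La_n}\|$, obtaining $\langle\d_0,R_{A_{[-n,n]}}(\l_n)\d_0\rangle\,\|A_{[-n,n]^d}\|=1$ and then passing to the limit via $\|A_{\La_n}\|\to\|A_{G_d}\|$ from Section~\ref{PF}; the explicit Perron--Frobenius vector is simultaneously obtained as the pointwise limit of the finite ones, matching the statement of the proposition.
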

 
 \begin{proof}
     Let us observe that $\La_n$ can be described as a finite comb
     graph $[-n,n]^d\comb[-n,n]$, hence as a finite perturbation of
     the disjoint union of $(2n+1)^{d}$ copies of $[-n,n]$.  Applying
     Theorem \ref{eqsec} with $\l_n=\|A_{[-n,n]^d\comb[-n,n]}\|$, and in
     particular equation \eqref{5b}, we obtain the equation for the  Perron-Frobenius 
     eigenvector $z_n$ for the matrix $S(\l_n)$ corresponding to the eigenvalue 1, given by
   \begin{equation*}
   \label{dzdn}
	 \langle \d_{0}, R_{A_{[-n,n]}}(\l_n)\d_{0} \rangle
	A_{[-n,n]^{d}}z_n=z_n.
    \end{equation*}
     This means 
     $R_{A_{[-n,n]}}(\l_n)\d_{0} \rangle \|A_{[-n,n]^{d}}\|=1$.
    Taking the limit for $n\to\infty$, we obtain, with $\l:=\lim \l_n$,
     $$
     \langle \d_{0}, R_{A_{\bz}}(\l)\d_{0} \rangle \|A_{\bz^{d}}\|=1
     $$
     which, according to (\ref{vzeta}), leads to
     \begin{equation*} 
	 \frac{2d}{\sqrt{\l^{2}-4}}=1\,,
    \end{equation*}
     or equivalently
     $$
     \l=\|A_{G_d}\|=2\sqrt{d^{2}+1}\,.
     $$
     Notice that $z_n$ is the Perron-Frobenius vector for
     $A_{[-n,n]^{d}}$, and we can normalize $z_n$ in order to to have
     value 1 in the origin.  Then $z_n$ converges pointwise to the
     unique generalized Perron-Frobenius vector for $\bz^d$, namely
     the vector which is constantly equal to 1 on $\bz^d$.  Then,
     according to \eqref{5b}, the Perron-Frobenius vector $x_n$ of
     $A_{[-n,n]^d\comb[-n,n]}$ is given by
     $x_n=R_{A_{[-n,n]}}(\l_n)z_n$.  As a consequence, for any $(\vec\jmath,j)\in V(\bz^d\comb \bz)$,
     $$
     \langle \d_{\vec{\jmath},j},x_n \rangle = \langle
     \d_{\vec{\jmath},j},R_{A_{[-n,n]}}(\l_n)z_n \rangle = \langle
     \d_{\vec{\jmath}},z_n \rangle \langle \d_j,R_{A_{[-n,n]}}(\l_n)\d_0
     \rangle.
     $$
     Taking the limit for $n\to\infty$ we show that $x_n$ converges
     pointwise; its limit is the generalized Perron-Frobenius vector
     $x$ whose component $(\vec{\jmath},j)$ is equal to $\langle
     \d_j,R_{A_{Z}}(\l)\d_0 \rangle$, with $\l=2\sqrt{d^{2}+1}$.
 \end{proof}
 
Let us consider the comb graph $G^{d}=\bz^{d} \comb \bz$ together with the finite volume approximations $\La_{n}=X_n\comb Y_n$, where $X_n$ is the graph $(\bz_{2n+1})^{d}$ (periodic boundary condition on the base graph), and $Y_n$ is the finite chain $[-n,n]$.

\begin{Lemma}\label{tensorPF}
Describing $\ell^2(\La_n)$ as $\ell^2(X_n)\otimes\ell^2(Y_n)$, the Perron Frobenius eigenvector $v_n$ for the adjacency operator $A_{\La_n}$ has the form $u_n\otimes R_{Y_{n}}(\|A_{\La_n}\|)\d_{0}$, where  $u_n$ is the vector constantly equal to 1 on $X_n$. Moreover,
\begin{equation*}
 2d\langle\d_0, R_{Y_{n}}(\|A_{\La_n}\|)\d_{0}\rangle=1
\end{equation*}
\end{Lemma}
\begin{proof}
Indeed,
     $A_{\La_n} = I\otimes A_{Y_{n}} + A_{X_n} \otimes P_{0}$, $P_0$ denoting the one-dimensional projection on $\d_0$, so that, for $v_n=u_n\otimes w_n$,
\begin{align*}
	A_{\La_n}v_{n}  = u_{n} \otimes A_{Y_{n}} w_{n} + A_{X_n} u_{n}
     \otimes P_{0}w_{n}  
    = u_{n}
     \otimes (A_{Y_n}w_{n} + 2d P_{0}w_{n}),
\end{align*}
where we used the fact that the constant vector on $X_n$ is the Perron-Frobenius vector for $A_{X_n}$, and the equality $\norm{A_{X_n}}=2d$.

Then, $v_n$ is an eigenvector for $A_{\La_n}$ with eigenvalue $t$ if $A_{Y_n}w_{n} + 2d P_{0}w_{n}-
tw_n=0$, which gives 
\begin{equation}\label{PFeq}
(t - A_{Y_{n}})w_{n} =  2d\langle \d_{0},w_{n} \rangle \d_{0}.
\end{equation}
 In particular, this implies
\begin{equation}\label{extracond}
2d\langle\d_0, R_{Y_{n}}(t)\d_{0}\rangle=1,
\end{equation}
 where $R_{Y_{n}}(t)$ denotes the resolvent $(t-A_{Y_{n}})^{-1}$.  Let us note that the function $\langle\d_0, R_{Y_{n}}(t)\d_{0}\rangle$ is decreasing in $(\|A_{Y_n}\|,+\infty)$, $\displaystyle\lim_{t\to\|A_{Y_n}\|}\langle\d_0, R_{Y_{n}}(t)\d_{0}\rangle=+\infty$ and $\displaystyle\lim_{t\to+\infty}\langle\d_0, R_{Y_{n}}(t)\d_{0}\rangle=0$, therefore there exists a $t$ for which condition (\ref{extracond}) is satisfied. With such a $t$,  the choice $w_n=R_{Y_{n}}(t)\d_{0}$ gives rise to an eigenvector $v_n$ by equation (\ref{PFeq}). Moreover such $v_n$ has positive entries, hence is the Perron-Frobenius vector. This implies that the $t$ satisfying equation (\ref{extracond}) coincides with $\|A_{\La_n}\|$, so it is unique.
\end{proof}

We may now use the preceeding Lemma to obtain results on the graph $G^d$.

\begin{Lemma}\label{convergence}
Let $v_n=u_n\otimes w_n$, with $u_n$ constantly equal to 1 on the base $X_n$, $w_n=\|R_{Y_{n}}(\|A_{\La_n}\|)\d_{0}\|^{-1}R_{Y_{n}}(\|A_{\La_n}\|)\d_{0}$, and $v=u\otimes w$, with $u$  constantly equal to 1 on the base graph $\bz^d$ and $w = \|R_{\bz}(\|A\|)\d_{0}\|^{-1}R_{\bz}(\|A\|)\d_{0}$. Then
\item{$(i)$} $w_n$ converges in norm to $w$,
\item{$(ii)$} $v$ is a generalized Perron-Frobenius vector for $A$, 
\item{$(iii)$} $\langle\d_0, R_{Y_{n}}(\l)\d_{0}\rangle=\frac{\tanh (n+1)\th}{\sqrt{\l^2-4}}$, where $2\cosh\th=\l$,
\item{$(iv)$} $\|R_{\bz}(\|A\|)\d_{0}\|^2=\frac{\sqrt{d^2+1}}{4d^3}$.
\end{Lemma}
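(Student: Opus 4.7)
My plan is to handle (iii) and (iv) first, as explicit difference-equation calculations in the spirit of Section~\ref{ex}, and then to bootstrap (i) and (ii) from them. For (iii), parametrise $\l=2\cosh\th$ with $\th>0$ (so $\sqrt{\l^2-4}=2\sinh\th$) and solve $(\l I-A_{Y_n})v=\d_0$ on the chain $[-n,n]$. This is the three-term recursion $v_{k+1}-\l v_k+v_{k-1}=0$ for $0<|k|<n$, the inhomogeneity $\l v_0-v_1-v_{-1}=1$, and end-point conditions that, combined with the recursion, amount to $v_{\pm(n+1)}=0$. By symmetry $v_{-k}=v_k$, and for $k\geq 0$ the unique solution vanishing at $n+1$ takes the form $v_k=\alpha\sinh((n+1-k)\th)$. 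Substituting into the $k=0$ equation and using $2\cosh\th\,\sinh((n+1)\th)-2\sinh(n\th)=2\sinh\th\,\cosh((n+1)\th)$ gives $\alpha=[2\sinh\th\,\cosh((n+1)\th)]^{-1}$, hence $v_0=\tanh((n+1)\th)/(2\sinh\th)=\tanh((n+1)\th)/\sqrt{\l^2-4}$.

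For (iv), the same recursion on all of $\bz$ with $v\in\ell^2$ forces $v_k=Ce^{-|k|\th}$; the equation at $k=0$ fixes $C=1/(2\sinh\th)$. Summing the geometric series gives
\[
\|R_\bz(\l)\d_0\|^2=\frac{1}{4\sinh^2\th}\sum_{k\in\bz}e^{-2|k|\th}=\frac{\coth\th}{4\sinh^2\th}=\frac{\cosh\th}{4\sinh^3\th}.
\]
At $\l=\|A\|=2\sqrt{d^2+1}$ one has $\cosh\th=\sqrt{d^2+1}$ and $\sinh\th=d$, giving $\sqrt{d^2+1}/(4d^3)$. I would also record for use in (ii) the identity $(R_\bz(\|A\|)\d_0)_0=1/(2\sinh\th)=1/(2d)$.

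For (i), the first step is to establish $\|A_{\La_n}\|\to\|A\|$. Writing $\|A_{\La_n}\|=2\cosh\th_n$ and substituting (iii) into \eqref{combnorm} gives $\sinh\th_n=d\tanh((n+1)\th_n)$; if $\th_n\to 0$, then $(n+1)\th_n\to 0$ as well, and the equation asymptotically becomes $\th_n\sim d(n+1)\th_n$, i.e.\ $1\sim d(n+1)$, which is absurd. Hence $\th_n$ is bounded below, so $\tanh((n+1)\th_n)\to 1$, $\sinh\th_n\to d$, and $\|A_{\La_n}\|\to 2\sqrt{d^2+1}=\|A\|$. Next, the explicit entries
\[
(R_{Y_n}(\|A_{\La_n}\|)\d_0)_k=\frac{\sinh((n+1-|k|)\th_n)}{2\sinh\th_n\,\cosh((n+1)\th_n)},\qquad|k|\leq n,
\]
converge pointwise to $e^{-|k|\th}/(2\sinh\th)=(R_\bz(\|A\|)\d_0)_k$ and, using $\sinh y\leq e^y/2$ and $\cosh x\geq e^x/2$ for $x,y>0$, satisfy the uniform bound $|(R_{Y_n}(\|A_{\La_n}\|)\d_0)_k|\leq e^{-|k|\th_n}/(2\sinh\th_n)$. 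For $n$ large, $\th_n\geq\th/2$, so these are dominated by a fixed exponentially decaying $\ell^2$ sequence; dominated convergence then yields $R_{Y_n}(\|A_{\La_n}\|)\d_0\to R_\bz(\|A\|)\d_0$ in $\ell^2$. Since the limit has positive norm by (iv), this passes to norm-convergence $w_n\to w$ of the normalised vectors.

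For (ii), I verify $Av=\|A\|v$ directly. From the comb adjacency and the constancy of $u$ on $\bz^d$,
\[
(Av)_{(\vec{\jmath},k)}=u_{\vec{\jmath}}(A_\bz w)_k+(A_{\bz^d}u)_{\vec{\jmath}}\,w_0\,\d_{k,0}=(A_\bz w)_k+2d\,w_0\,\d_{k,0}.
\]
For $k\neq 0$, the identity $(A_\bz w)_k=\|A\|w_k$ follows from $w\propto R_\bz(\|A\|)\d_0$ since then $(\|A\|I-A_\bz)w$ is supported at $0$. For $k=0$, one needs $2d\,w_0=\|R_\bz(\|A\|)\d_0\|^{-1}$, which using the identity from (iv) reduces to $2d\cdot 1/(2d)=1$. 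Positivity of all entries is immediate from the positivity of $u$ and of $R_\bz(\|A\|)\d_0$, so $v$ is a generalised Perron--Frobenius vector for $A$. The single non-routine step throughout is the uniform exponential domination used for (i); everything else is bookkeeping from the explicit formulas.
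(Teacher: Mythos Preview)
Your argument is correct. Parts (iii) and (iv) proceed exactly as in the paper: you write down the explicit resolvent entries $v_k=\sinh((n+1-|k|)\th)/(2\sinh\th\cosh((n+1)\th))$ on $Y_n$ and the exponentially decaying limit $e^{-|k|\th}/(2\sinh\th)$ on $\bz$, and read off the required quantities. For (i) and (ii) you take a slightly more elementary route than the paper. The paper obtains (i) by invoking Lemma~\ref{StrongConv} (strong convergence of continuous functional calculi of $A_{Y_n}$) after noting $\|A_{\La_n}\|\to\|A\|$; you instead extract $\|A_{\La_n}\|\to\|A\|$ directly from \eqref{combnorm} via the implicit equation $\sinh\th_n=d\tanh((n+1)\th_n)$, and then prove $\ell^2$-convergence by a dominated-convergence argument using the uniform exponential bound $|(R_{Y_n}(\|A_{\La_n}\|)\d_0)_k|\le e^{-|k|\th_n}/(2\sinh\th_n)$. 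For (ii) the paper simply says the argument is analogous to Section~\ref{PF} (i.e.\ $v$ arises as the pointwise limit of the finite-volume Perron--Frobenius vectors $v_n$), whereas you verify $Av=\|A\|v$ by a direct computation using the comb adjacency and the identity $(R_\bz(\|A\|)\d_0)_0=1/(2d)$. Your approach is more self-contained and avoids appealing to the abstract convergence lemma and to the limiting construction of Section~\ref{PF}; the paper's approach is terser but relies on those earlier results. One small point: in your contradiction argument for (i), the implication ``$\th_n\to0\Rightarrow(n+1)\th_n\to0$'' is not automatic but does follow immediately from the equation itself, since $\sinh\th_n\to0$ forces $\tanh((n+1)\th_n)\to0$.
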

\begin{proof}
$(i)$ Let us observe that $\|A_{\La_n}\|\to\|A\|=2\sqrt{d^2+1}$, with $A=A_{G^d}$,  
while $\|A_{Y_n}\|\to\|A_{\bz}\|=2$, 
hence Lemma \ref{StrongConv} implies that $R_{Y_{n}}(\|A_{\La_n}\|)$ converges strongly to $R_{\bz}(\|A\|)$.\\
$(ii)$ The proof is analogous to the result in Section \ref{PF}.\\
$(iii)$ Setting $\l=2\cosh\th$, it is not difficult to check that the vector $z(\l,n)$ defined by
\begin{equation*}
z(\l,n)_j=\frac{\sinh[(n+1-|j|)\th]}{2\sinh\th\cosh[(n+1)\th]}=
\tanh[(n+1)\th]\frac{\cosh(|j|\th)}{2\sinh\th}-
\frac{\sinh(|j|\th)}{2\sinh\th},\quad |j|\leq n,
\end{equation*}
satisfies $(\l I-A_{Y_n})z(\l,n)=\d_0$, therefore $\langle\d_0, R_{Y_{n}}(\l)\d_{0}\rangle=z(\l,n)_0$. The thesis follows since $2\sinh\th=2\sqrt{\cosh^2\th-1}=\sqrt{\l^2-4}.$
\\
$(iv)$ By $(i)$, $z(\|A\|,n)$ converges in norm to $R_{\bz}(\|A\|)\d_{0}$, therefore
\[
\langle\d_j,R_{\bz}(\|A\|)\d_{0}\rangle=\frac{e^{-|j|\th}}{2\sinh\th},
\]
with $2\cosh\th=\|A\|=2\sqrt{d^2+1}$. The thesis follows by a straightforward computation.
\end{proof}

Let us set $v=\sum_{\vec\jmath}\d_{\vec\jmath}\otimes v_{\vec\jmath}$, $v_{\vec\jmath}$ denoting the restriction of $v$ to the fibre at the point ${\vec\jmath}$.
We say that $v\in\cs_{0}$ if  the sequence $\|v_{\vec\jmath}\|$, ${\vec\jmath}\in\bz^d$, is rapidly decreasing.
We now show that $T_{t}\equiv e^{itH}$ defines a one--parameter
group of Bogoliubov automorphisms on $\cs_{0}$.

\begin{Prop}
\label{add4}
Let $H=\|A\|-A$ on the comb graphs $G_{d}$. Then
$e^{itH}\cs_{0}\subset\cs_{0}$.
\end{Prop}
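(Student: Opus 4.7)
The plan is to pass to the Fourier picture on the base $\bz^d$, where $H$ diagonalises as a fibred operator over the torus, and reduce the statement to a smoothness question for the resulting unitary family. First I would identify $\ell^2(VG_d)\cong\ell^2(\bz^d)\otimes\ell^2(\bz)$ and apply the partial Fourier transform $\cf$ on the first factor, obtaining a unitary $\cf:\ell^2(VG_d)\to L^2(\bt^d;\ell^2(\bz))$ that sends $v=\sum_{\vec\jmath}\d_{\vec\jmath}\otimes v_{\vec\jmath}$ to $\tilde v(\vec k)=\sum_{\vec\jmath}e^{-i\vec k\cdot\vec\jmath}v_{\vec\jmath}$. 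Since $A=A_{\bz^d}\otimes P_0+I\otimes A_{\bz}$, under $\cf$ it becomes the fibred operator $\tilde A(\vec k)=A_{\bz}+\eps(\vec k)P_0$ with $\eps(\vec k):=2\sum_{j=1}^d\cos k_j$, so $\tilde H(\vec k)=\|A\|-\tilde A(\vec k)$ also acts fibrewise. Standard Fourier--Schwartz duality then supplies the key dictionary: $v\in\cs_0$, namely $\|v_{\vec\jmath}\|$ rapidly decreasing, if and only if $\tilde v\in C^\infty(\bt^d;\ell^2(\bz))$.

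Second, I would prove that the unitary family $U(\vec k,t):=e^{it\tilde H(\vec k)}$ is smooth in $\vec k$ in the operator-norm topology, with derivatives uniformly bounded in $\vec k$ and polynomially bounded in $t$. Since only the scalar $\eps(\vec k)$ depends on $\vec k$ inside $\tilde H$, Duhamel's formula yields
\[
\partial_{k_j}U(\vec k,t)=-it\int_0^1 U(\vec k,st)\,(\partial_{k_j}\eps(\vec k))P_0\,U(\vec k,(1-s)t)\,ds,
\]
whose operator norm is at most $2|t|$ uniformly in $\vec k$. Iterating Duhamel's formula multi-index by multi-index, one shows that $\partial^\a U(\vec k,t)$ exists in operator norm and satisfies $\|\partial^\a U(\vec k,t)\|\leq C_\a(1+|t|)^{|\a|}$ uniformly on $\bt^d$: derivatives that fall on further copies of $U$ reproduce expressions of the same shape, whereas derivatives that fall on $\eps$ contribute only bounded scalar factors, as $\eps$ is real-analytic on the compact torus.

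Finally, with smoothness of $U$ in hand, the Leibniz rule applied in the $\ell^2(\bz)$-valued sense implies that, for any $\tilde v\in C^\infty(\bt^d;\ell^2(\bz))$, the product $\vec k\mapsto U(\vec k,t)\tilde v(\vec k)$ again lies in $C^\infty(\bt^d;\ell^2(\bz))$. Since this function equals $\cf(e^{itH}v)(\vec k)$, inverting the Fourier transform shows that $e^{itH}v$ has rapidly decreasing fibre norms, i.e.\ $e^{itH}v\in\cs_0$. The main technical step is the uniform norm control of the iterated Duhamel expansion; this is straightforward here because the whole $\vec k$-dependence of $\tilde H$ is concentrated in the scalar factor $\eps(\vec k)$ multiplying the fixed rank-one projection $P_0$, so no spectral-theoretic sophistication beyond the recursion is needed.
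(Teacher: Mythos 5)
Your proof is correct, but it follows a genuinely different route from the paper's. The paper represents $e^{itH}v$ as a Dunford contour integral $\frac{1}{2\pi i}\oint_\g e^{it(\|A\|-\l)}R_{G^d}(\l)v\,d\l$ and then feeds in the explicit resolvent formula coming from the secular equation (Proposition \ref{Resolvent}, Remark \ref{SimpleCase}), namely $R_{G^d}(\l)=I\otimes R_\bz(\l)+\Phi(\l)\otimes R_\bz(\l)P_0R_\bz(\l)$ with $\Phi(\l)=s(\l)R_{\bz^d}(s(\l))A_{\bz^d}$; the rapid decay of the fibre norms of $e^{itH}v$ is then extracted by Fourier transforming the explicit symbol $\frac{2s(\l)\sum_l\cos 2\pi\th_l}{s(\l)-2\sum_l\cos 2\pi\th_l}$ on $\bt^d$ and integrating by parts, with sup bounds uniform on the contour. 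You instead bypass the resolvent formula entirely: you fibre $H$ itself over the torus via the partial Fourier transform in the base variable, so that $e^{itH}$ acts fibrewise as $e^{it\tilde H(\vec k)}$ with $\tilde H(\vec k)=\|A\|-A_\bz-\eps(\vec k)P_0$, and you obtain smoothness in $\vec k$ of the fibre unitaries by iterated Duhamel estimates, exploiting that the whole $\vec k$-dependence sits in the scalar $\eps(\vec k)$ multiplying the fixed rank-one projection $P_0$. Both arguments rest on the same dictionary (rapidly decreasing fibre norms $\Leftrightarrow$ smooth $\ell^2(\bz)$-valued function on $\bt^d$, used in the paper in the guise of the integration-by-parts step), but your version is more elementary and self-contained — no holomorphic extension $s(\l)$, no contour, no secular-equation machinery — and it generalizes immediately to any Hamiltonian of the fibred form $A_\bz+\eps(\vec k)P$ with $\eps$ smooth; what the paper's route buys is the explicit resolvent symbol, which it has already established and reuses heavily in the thermodynamic-limit computations of the following sections. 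The only points you should spell out are routine: that the functional calculus of the decomposable operator acts fibrewise (standard direct-integral theory), and the induction justifying differentiation under the integral sign in the iterated Duhamel expansion, which is immediate from the norm-continuity (in $\vec k$) of the lower-order derivatives.
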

\begin{proof}
If $v\in\ell^{2}(G_{d})$ then
$$
e^{itH}v=\frac{1}{2\pi i}\oint_{\g}e^{it(\|A\|-\l)}R_{G^d}(\l)v\, d\l
$$
where $\g$ is a Jordan curve surrounding counterclockwise the spectrum
of $H$, and $R_{G^d}(\l)=(\l-A)^{-1}$ is the resolvent of the adjacency operator on the comb graph. 
Let $(\vec\jmath,j)=(j_{1},\dots,j_{d},i)$ denote the coordinates of
the comb graph $G_{d}$, and denote by 
$\d_{\vec\jmath}\otimes\d_j$ the delta function on a point $({\vec\jmath},j)$. Let us recall that, by Proposition \ref{Resolvent}, for $|\l|$ large enough,
$R_{G^d}(\l) = I \otimes R_{\bz}(\l) + \Phi(\l) \otimes R_{\bz}(\l) P_{0} R_{\bz}(\l)$
with
\begin{equation*}
\Phi(\l):=s(\l)R_{\bz^{d}}(s(\l))A_{\bz^{d}}\,,
\end{equation*}
and $s(\l)$ is the holomorphic extension of $\sqrt{\l^{2}-4}$ to
$\bc\backslash[-2,2]$. 

Let us set $v=\sum_{\vec\jmath}\d_{\vec\jmath}\otimes v_{\vec\jmath}$, $v_{\vec\jmath}$ denoting the restriction of $v$ to the fibre at the point ${\vec\jmath}$.
Then the assumption $v\in\cs_0$ amounts to say that the sequence $\|v_{\vec\jmath}\|$, ${\vec\jmath}\in\bz^d$, is rapidly decreasing, while the thesis, namely $e^{itH}v\in\cs_0$, is equivalent to say that the sequence $\sum_{j\in\bz}|\langle\d_{\vec\jmath}\otimes\d_j, e^{itH}v\rangle |^2$, ${\vec\jmath}\in\bz^d$, is rapidly decreasing.
By the equations above, we have
\begin{align*}
\langle\d_{\vec\jmath}\otimes\d_j,&R_{G^d}(\l)v\rangle=\\
&=\langle\d_j,R_\bz(\l)v_{\vec\jmath}\rangle+\sum_{{\vec k}\in\bz^d}\langle\d_{\vec\jmath},\Phi(\l)\d_{\vec k}\rangle\ \langle\d_j,R_{\bz}(\l) P_{0} R_{\bz}(\l)v_{\vec k}\rangle.
\end{align*}
Choose the curve $\g$ as a circle with radius greater than 
$\|A_\bz\|+\|A_{\bz^d}\|=2(d+1)$, which surrounds the spectrum of $A_{G_d}$, and the hypothesis of Proposition \ref{Resolvent} are satisfied, see Remark \ref{SimpleCase}. With such a choice,
\begin{align*}
\sum_{j\in\bz}&|\langle\d_{\vec\jmath}\otimes\d_j, e^{itH}v\rangle |^2
\leq\frac{\ell(\g)^2}{2\pi^2}\sup_{\l\in\g}|e^{-it\l}|^2\|R_\bz(\l)\|^2 \|v_{\vec\jmath}\|^2+\\
&+\frac{\ell(\g)^2}{2\pi^2}\sup_{\l\in\g}|e^{-it\l}|^2\sum_{j\in\bz}\left|\sum_{{\vec k}\in\bz^d}\langle\d_{\vec\jmath},\Phi(\l)\d_{\vec k}\rangle\ \langle\d_j,R_{\bz}(\l) P_{0} R_{\bz}(\l)v_{\vec k}\rangle\right|^2.
\end{align*}
Clearly the first summand is rapidly decreasing in ${\vec\jmath}$. Concerning the second summand, we have
\begin{align*}
\sup_{\l\in\g}&\sum_{j\in\bz}\left|\sum_{{\vec k}\in\bz^d}\langle\d_{\vec\jmath},\Phi(\l)\d_{\vec k}\rangle\ \langle\d_j,R_{\bz}(\l) P_{0} R_{\bz}(\l)v_{\vec k}\rangle\right|^2=\\
&=\sup_{\l\in\g}\left\|\sum_{{\vec k}\in\bz^d}\langle\d_{\vec\jmath},\Phi(\l)\d_{\vec k}\rangle\ R_{\bz}(\l) P_{0} R_{\bz}(\l)v_{\vec k}\right\|^2\\
&\leq\sup_{\l\in\g}\|R_{\bz}(\l)\|^4\left\|\sum_{{\vec k}\in\bz^d}\langle\d_{\vec\jmath},\Phi(\l)\d_{\vec k}\rangle\ v_{\vec k}\right\|^2.
\end{align*}
The thesis now amounts to show that, for any multi-index ${\bf\a}=(\a_1,\dots,\a_d)$, the sequence $\sup_{\l\in\g}\left\|{\vec\jmath}^{\bf \a}\sum_{{\vec k}\in\bz^d}\langle\d_{\vec\jmath},\Phi(\l)\d_{\vec k}\rangle\ v_{\vec k}\right\|$ is bounded. 

Passing to the Fourier transform on the torus $\bt^d$, and setting $v(\th):=\sum_{{\vec k}\in\bz^d}e^{i{\vec k}}v_{\vec k}$, we have
\begin{align*}
\sup_{\l\in\g}&\left\|{\vec\jmath}^{\bf \a}\sum_{{\vec k}\in\bz^d}\langle\d_{\vec\jmath},\Phi(\l)\d_{\vec k}\rangle
\ v_{\vec k}\right\|=\\
=&\sup_{\l\in\g}\left\|{\vec\jmath}^{\bf \a}\int_{\bt^d}\sum_{{\vec k}\in\bz^d}e^{i{\bf (k-j)\th}}\frac{2s(\l)\sum_{l=1}^{d}\cos2\pi\th_{l}}{s(\l)-2\sum_{l=1}^{d}\cos2\pi\th_{l}}
v_{\vec k}\, d\th\right\|\\
=&\sup_{\l\in\g}\left\|(-i)^{|\a|}\int_{\bt^d}e^{-i{\vec\jmath\th}}\ 
\frac{\partial^{|\a|}}{\partial\th^\a}\left(\frac{2s(\l)
\sum_{l=1}^{d}\cos2\pi\th_{l}}
{s(\l)-2\sum_{l=1}^{d}\cos2\pi\th_{l}}v({\bf\th})\right)\, d\th\right\|\\
\leq&\sup_{\l\in\g}\sup_{\th\in\bt^d}
\left\|\frac{\partial^{|\a|}}{\partial\th^\a}\left(\frac{2s(\l)
\sum_{l=1}^{d}\cos2\pi\th_{l}}
{s(\l)-2\sum_{l=1}^{d}\cos2\pi\th_{l}}v({\bf\th})\right)\right\|.
\end{align*}
Since $v_{\vec k}$ is rapidly decreasing, $v(\th)\in\cc^\infty(\bt^d,\ell^2(\bz))$. The thesis follows.
\end{proof}

We end the present section by pointing out the following fact. For $v\in\cs_0$, define on the Weyl operators $\a_t(W(v)):=W(T_tv)$. We obtain a one--parameter group of $*$--automorphisms 
$t\mapsto\a_t$ on the CCR algebra $\ccr(\cs_0)$. Namely, $(\ccr(\cs_0),\a)$ is the dynamical system which is of interest in our context.

\section{Thermodynamical states for comb graphs in the condensation regime}
\subsection{General results}

We will consider here  the comb $G^{d}=\bz^{d} \comb \bz$  with the finite volume approximations $\La_{n}=X_n\comb Y_n$, where $X_n=(\bz_{2n+1})^{d}$ (periodic boundary condition on the base graph), and $Y_n$ is the line graph $\{-n,\dots,0,\dots,n\}$. Our aim is to study the states $\om_{n}$ relative to the Gibbs grand canonical ensemble on the finite volume approximations $\La_n$ with chemical potential $\mu_{n}$, and the existence of the limit state $\om$ on the comb graph $G^d$ in the condensation regime, that is when $\m_n\to0$. The case $\m_n\to\m<0$ of non condensation regime, presents no further tecnical difficulties, and is described in Theorem \ref{regi}. To avoid technicalities, 
we suppose $\mu_{n}<0$.

 Let $A_{\La_n}$ be the adjacency matrix of the comb graph $\La_n$.  The matrices
 $A_{\La_n}$ can all be considered as operators acting on $\ell^{2}(VG^d)$,
  if we identify $X_n\comb Y_n$ with $[-n,n]^d\comb[-n,n]\subset\bz^d\comb\bz$.  
We have to study the limit behaviour of    
$$
\om_{n}(a^{+}(\xi)a(\eta)),\qquad\xi,\eta\in\cs_{0}.
$$

Let us denote by $H_n=(\|A\|-\m_n)I-A_{\La_n}$  the Hamiltonian on $\La_n$ with chemical potential $\m_n$. We want to compute the limit
\begin{equation}\label{2pflim}
\lim_{n}\langle \eta, (e^{\beta H_n}-I)^{-1}\xi \rangle,
\end{equation}
for suitable vectors $\eta,\xi$.
We first write $$(e^{\beta H_n}-I)^{-1}=\left((e^{\beta H_n}-I)^{-1}-(\beta H_n)^{-1}\right)+(\beta H_n)^{-1}.$$
Then

\begin{Lemma}\label{reductiontoresolvent}
$(e^{\beta H_n}-I)^{-1}-(\b H_n)^{-1}$ converges to  $(e^{\beta H}-I)^{-1}-(\b H)^{-1}$ in the strong operator topology.
\end{Lemma}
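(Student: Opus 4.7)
The plan is to reduce the statement to a direct application of Lemma \ref{StrongConv}, once one recognizes that the ostensibly singular combination is in fact a continuous function of the operator. Define the scalar function
\[
g(x):=\frac{1}{e^{\beta x}-1}-\frac{1}{\beta x},\qquad x>0.
\]
By the Taylor expansion $e^{\beta x}-1=\beta x+\tfrac{(\beta x)^{2}}{2}+O(x^{3})$, one obtains $g(x)=-\tfrac12+O(x)$ as $x\to 0^{+}$, so setting $g(0):=-\tfrac12$ extends $g$ to a continuous (in fact real analytic) function on $[0,\infty)$.

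Since $\m_{n}<0$, the operator $H_{n}=(\|A\|-\m_{n})I-A_{\La_{n}}$ is strictly positive and invertible, with spectrum contained in the compact interval $[-\m_{n},2\|A\|-\m_{n}]$; consequently both $(e^{\beta H_{n}}-I)^{-1}$ and $(\beta H_{n})^{-1}$ are bounded and their difference coincides with $g(H_{n})$ in the continuous functional calculus. For the limit operator $H=\|A\|I-A$ the point $0$ may lie in $\s(H)$, but $g(H)$ is nevertheless a well-defined bounded operator via the continuous extension of $g$; this is the precise meaning to attach to the formal expression $(e^{\beta H}-I)^{-1}-(\beta H)^{-1}$ appearing in the statement.

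With this reinterpretation in hand I would invoke Lemma \ref{StrongConv} with $\varphi=g$: the hypotheses $\{\m_{n}\}\subset(-\infty,0]$, $\m_{n}\to\m=0$ are satisfied, and the conclusion $g(H_{n})\xi\to g(H)\xi$ for every $\xi\in\ell^{2}(G^{d})$ is precisely the desired strong convergence. I do not foresee any serious obstacle; the only conceptual point is that the two divergent summands must be treated jointly rather than term by term, which is made possible by the cancellation encoded in the removable singularity of $g$ at the origin.
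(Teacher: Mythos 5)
Your proof is correct and follows exactly the paper's own argument: the combination $(e^{\beta\lambda}-1)^{-1}-(\beta\lambda)^{-1}$ extends to a continuous function on $[0,\infty)$ (with value $-\tfrac12$ at $0$), and the strong convergence then follows from Lemma \ref{StrongConv}. You merely spell out the removable singularity and the interpretation of $g(H)$ in more detail than the paper does.
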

\begin{proof}
Indeed the function  $(e^{\beta \l}-I)^{-1}-(\b\l)^{-1}$ is continuous on $[0,\infty)$, hence the result follows by Proposition \ref{StrongConv}.
\end{proof}

We have therefore reduced the computation of (\ref{2pflim}) to the computation of 
\begin{equation*}
\lim_{n}\langle \eta, (\beta H_n)^{-1}\xi \rangle.
\end{equation*}

Set $\l_n:= \norm{A}-\mu_n$. Since $H_n^{-1}=R_{\La_n}(\l_n)$ by definition, the identity of Proposition \ref{Resolvent} implies
\begin{equation}\label{decomposition}
H_n^{-1}=I \otimes R_{Y_{n}}(\l_{n}) + \Phi_{n}\otimes R_{Y_{n}}(\l_{n}) P_{0} R_{Y_{n}}(\l_{n}),
\end{equation}
where $P_{0} = |\d_{0}\rangle \langle \d_{0}|$ (in bra--ket notation), and $\Phi_{n} =\left(I-\langle\d_0,R_{Y_n}(\l_n)\d_0\rangle A_{X_n}\right)^{-1}A_{X_n}$.
Let us notice that, by Lemma \ref{convergence} $(iii)$, $\langle\d_0,R_{Y_n}(\l_n)\d_0\rangle\to(2d)^{-1}$, hence we set
\begin{equation}\label{seteps}
\langle\d_0,R_{Y_n}(\l_n)\d_0\rangle=\frac{1}{2(d+\eps_n)},
\end{equation}
and we have $\eps_n\to0$.
Setting $\eta=\sum_{\vec\jmath}\d_{\vec\jmath}\otimes\eta_{\vec\jmath}$, $\xi=\sum_{\vec k}\d_{\vec k}\otimes\xi_{\vec k}$ we have
\begin{equation}\label{decompH-1}
\langle \eta,  H_n^{-1}\xi \rangle=
\langle\eta,I\otimes R_{Y_{n}}(\l_{n})\xi\rangle
 + \sum_{\vec\jmath, \vec k}\langle\d_{\vec\jmath},\Phi_n \d_{\vec k}\rangle
\langle\eta_{\vec\jmath},R_{Y_{n}}(\l_{n})P_0 R_{Y_{n}}(\l_{n})\xi_{\vec k}\rangle.
\end{equation}

We first observe that, by Proposition \ref{StrongConv},
\begin{equation}\label{nonPertSummand}
\lim_n \langle\eta,I\otimes R_{Y_{n}}(\l_{n})\xi\rangle=
\langle\eta,I\otimes R_{\bz}(2\sqrt{d^2 +1})\xi\rangle
\end{equation}

Let us now compute $\displaystyle\lim_n\langle\d_{\vec\jmath},\Phi_n\d_{\vec k}\rangle$.
Making use of discrete Fourier transform we get
$$
\langle\d_{\vec\jmath},\Phi_n\d_{\vec k}\rangle=
\begin{cases}
2d(d+\eps_n)\displaystyle
\int_{\bt^d}\frac{ (\frac1d\sum_{i=1}^d\cos\th_i)\cos((\vec\jmath-\vec k)\vec\th)}{\eps_n+\sum_{i=1}^d(1-\cos\th_i)}\ dm_n(\vec\th)&\vec\jmath,\vec k \in [-n,n]^d\\
0&\vec\jmath,\vec k \not\in [-n,n]^d
\end{cases}
$$
where $m_n$ is the normalized measure on $\bt^d$ given by
\begin{equation*}
m_n=\frac{1}{(2n+1)^d}\sum_{\vec\jmath\in [-n,n]^d}\d_{\frac{2\pi}{2n+1}\vec\jmath}.
\end{equation*}
We now write
\begin{equation}\label{decompPhi}
\int_{\bt^d}\frac{ (\frac1d\sum_{i=1}^d\cos\th_i)\cos((\vec\jmath-\vec k)\vec\th)}{\eps_n+\sum_{i=1}^d(1-\cos\th_i)}\ dm_n(\vec\th)=
k_n + Q_n(\d_{\vec\jmath},\d_{\vec k}),
\end{equation}
where 
\begin{align}
&k_n=\int_{\bt^d}\frac{ 1}{\eps_n+\sum_{i=1}^d(1-\cos\th_i)}\ dm_n(\vec\th),
\label{defkn}
\\
&Q_n(\d_{\vec\jmath},\d_{\vec k})=
\int_{\bt^d}\frac{ (\frac1d\sum_{i=1}^d\cos\th_i)\cos((\vec\jmath-\vec k)\vec\th)-1}{\eps_n+\sum_{i=1}^d(1-\cos\th_i)}\ dm_n(\vec\th), 
\end{align}

According to Lemma \ref{reductiontoresolvent} and equations (\ref{nonPertSummand}), (\ref{decompH-1}), (\ref{decompPhi}), we have proved the following.

\begin{Lemma}\label{2pfDec}
\begin{align}
\langle\eta,(e^{\b H_n}-I)^{-1}\xi\rangle&=\notag\\
&=
\langle\eta,\left((e^{\b H_n}-I)^{-1}-(\b H_n)^{-1}\right)\xi\rangle
+
\frac1\b\langle\eta,I\otimes R_{Y_n}(\l_n)\xi\rangle
\label{1sum}\\
&+
\frac{2d(d+\eps_n)}{\b}\sum_{\vec\jmath, \vec k\in[-n,n]^d}Q_n(\d_{\vec\jmath},\d_{\vec k})\langle\eta_{\vec\jmath},R_{Y_n}(\l_n)P_0R_{Y_n}(\l_n)\xi_{\vec k}\rangle
\label{2sum}\\
&+
\frac{2d(d+\eps_n)}\b k_n\sum_{\vec\jmath, \vec k\in [-n,n]^d}\langle\eta_{\vec\jmath},R_{Y_n}(\l_n)P_0R_{Y_n}(\l_n)\xi_{\vec k}\rangle,
\label{3sum}
\end{align}
\end{Lemma}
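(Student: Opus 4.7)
The statement is essentially an assembly of ingredients already developed, so the plan is to chain together the identities in the right order and identify each summand with the corresponding term in the claim.

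First, I would write $(e^{\b H_n}-I)^{-1}=\bigl[(e^{\b H_n}-I)^{-1}-(\b H_n)^{-1}\bigr]+(\b H_n)^{-1}$, producing at once the bracket term in \eqref{1sum} (no manipulation needed — it is just kept as is, and Lemma~\ref{reductiontoresolvent} is only invoked later when passing to the limit). The real work is then on the second piece $\frac{1}{\b}\langle\eta,H_n^{-1}\xi\rangle$.

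Next, I would apply the resolvent identity \eqref{decomposition} (a direct consequence of Proposition~\ref{Resolvent} with $C=0$, $B=0$, $D=A_{X_n}\otimes P_0$, cf.\ Remark~\ref{SimpleCase}) to write
\[
H_n^{-1}=I\otimes R_{Y_n}(\l_n)+\Phi_n\otimes R_{Y_n}(\l_n)P_0R_{Y_n}(\l_n).
\]
Pairing with $\eta=\sum_{\vec\jmath}\d_{\vec\jmath}\otimes\eta_{\vec\jmath}$ and $\xi=\sum_{\vec k}\d_{\vec k}\otimes\xi_{\vec k}$ immediately yields the second term of \eqref{1sum} and the combined expression $\frac{1}{\b}\sum_{\vec\jmath,\vec k}\langle\d_{\vec\jmath},\Phi_n\d_{\vec k}\rangle\langle\eta_{\vec\jmath},R_{Y_n}(\l_n)P_0R_{Y_n}(\l_n)\xi_{\vec k}\rangle$, which still has to be split into \eqref{2sum} and \eqref{3sum}.

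The third step is the Fourier-theoretic computation of $\langle\d_{\vec\jmath},\Phi_n\d_{\vec k}\rangle$. Diagonalizing $A_{X_n}$ on the torus $\bt^d$ via the discrete Fourier transform (eigenvalues $2\sum_i\cos\th_i$ on the lattice $\frac{2\pi}{2n+1}\bz^d$), and using \eqref{seteps} to replace $\langle\d_0,R_{Y_n}(\l_n)\d_0\rangle$ by $(2(d+\eps_n))^{-1}$ inside $\Phi_n=(I-\langle\d_0,R_{Y_n}(\l_n)\d_0\rangle A_{X_n})^{-1}A_{X_n}$, one gets
\[
\langle\d_{\vec\jmath},\Phi_n\d_{\vec k}\rangle=2d(d+\eps_n)\int_{\bt^d}\frac{(\tfrac{1}{d}\sum_{i=1}^d\cos\th_i)\cos((\vec\jmath-\vec k)\vec\th)}{\eps_n+\sum_{i=1}^d(1-\cos\th_i)}\,dm_n(\vec\th),
\]
for $\vec\jmath,\vec k\in[-n,n]^d$ (and zero otherwise), with $m_n$ as in \eqref{eq:deltaMis}. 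Then \eqref{decompPhi} splits this integrand as $k_n+Q_n(\d_{\vec\jmath},\d_{\vec k})$ with the definitions \eqref{defkn}--\eqref{defQn}; substituting back produces exactly the terms \eqref{2sum} and \eqref{3sum}.

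There is no real obstacle — every individual identity has been prepared. The only point requiring a touch of care is the Fourier computation, where one must keep track of the factor $2d(d+\eps_n)$ correctly: it arises because the numerator $A_{X_n}$ has symbol $2\sum_i\cos\th_i=2d\cdot\tfrac{1}{d}\sum_i\cos\th_i$ while the denominator $(I-\langle\d_0,R_{Y_n}(\l_n)\d_0\rangle A_{X_n})$ becomes, after multiplying and dividing by $2(d+\eps_n)$, proportional to $\eps_n+\sum_i(1-\cos\th_i)$. Once this bookkeeping is checked, the lemma follows by collecting the four summands above.
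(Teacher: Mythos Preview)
Your proposal is correct and follows exactly the paper's approach: the lemma is stated in the paper with the remark that it follows from the preceding equations (\ref{decompH-1}), (\ref{decompPhi}) and the trivial splitting $(e^{\b H_n}-I)^{-1}=\bigl[(e^{\b H_n}-I)^{-1}-(\b H_n)^{-1}\bigr]+(\b H_n)^{-1}$, and your write-up simply walks through those identities in order. Your observation that Lemma~\ref{reductiontoresolvent} is not actually needed for the algebraic identity (only later for the limit) is also accurate.
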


We now show that  $Q_n(\d_{\vec\jmath},\d_{\vec k})$ converges to
 \begin{equation}\label{defQ}
Q(\d_{\vec\jmath},\d_{\vec k}):=\int_{\bt^d}\frac{ (\frac1d\sum_{i=1}^d\cos\th_i)\cos((\vec\jmath-\vec k)\vec\th)-1}{\sum_{i=1}^d(1-\cos\th_i)}\ dm(\vec\th), 
\end{equation}
where $dm$ denotes the normalized Lebesgue measure on $\bt^d$.

\begin{Prop}\label{Qn-Q}
\[
|Q_n(\d_{\vec\jmath},\d_{\vec k})-Q(\d_{\vec\jmath},\d_{\vec k})|\leq\a_n (1+|\vec\jmath-\vec k|^2)
\]
for a suitable infinitesimal sequence $\a_n$.
\end{Prop}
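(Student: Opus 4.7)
The plan is to split $Q_n-Q$ into two differences, one measuring the replacement of the regularized denominator $\eps_n+h(\vec\theta)$ by $h(\vec\theta)$, and one measuring the replacement of the discrete measure $m_n$ by Lebesgue measure $m$. Adopting the shorthand $p:=\vec\jmath-\vec k$, $h(\vec\theta):=\sum_{i=1}^d(1-\cos\theta_i)$, $f_p(\vec\theta):=\bigl[\tfrac1d\sum_i\cos\theta_i\bigr]\cos(p\cdot\vec\theta)-1$, $F_p:=f_p/h$ and $G_{p,n}:=f_p/(\eps_n+h)$, I would write $Q_n-Q=A_n+B_n$ with $A_n:=\int(G_{p,n}-F_p)\,dm_n$ and $B_n:=\int F_p\,(dm_n-dm)$.

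The preliminary pointwise estimate that makes the whole scheme work is $|f_p(\vec\theta)|\leq C(1+|p|^2)\,h(\vec\theta)$. To obtain it I would split
$f_p=\bigl[\tfrac1d\sum_i\cos\theta_i\bigr]\bigl[\cos(p\cdot\vec\theta)-1\bigr]-h(\vec\theta)/d$,
use $|\cos(p\cdot\vec\theta)-1|\leq\tfrac12|p|^2|\vec\theta|^2$, and combine with the elementary lower bound $h(\vec\theta)\geq(2/\pi^2)|\vec\theta|^2$ on $\bt^d$. In particular $|G_{p,n}|,|F_p|\leq C(1+|p|^2)$ uniformly in $\vec\theta$ and $n$.

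For $A_n$, the identity $G_{p,n}-F_p=-\eps_n F_p/(\eps_n+h)$ together with the pointwise estimate yields
$$|A_n|\leq C(1+|p|^2)\,\eps_n\int_{\bt^d}\frac{dm_n(\vec\theta)}{\eps_n+h(\vec\theta)}=C(1+|p|^2)\,\eps_n k_n,$$
with $k_n$ as defined in (\ref{defkn}). By Lemma \ref{convergence}(iii), $\eps_n$ tends to $0$ at a geometric rate (because $\tanh[(n+1)\th_n]\to 1$ exponentially fast with $\th_n\to\arccosh\sqrt{d^2+1}>0$), whereas isolating the origin in the lattice sum defining $k_n$ and using $h(\vec\theta)\geq c|\vec\theta|^2$ shows that $k_n$ grows at most polynomially in $n$; hence $\eps_n k_n\to 0$ in every dimension. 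For $B_n$, one uses the algebraic identity $F_p=-1/d+\tilde F_p$ with
$\tilde F_p(\vec\theta):=\bigl[\tfrac1d\sum_i\cos\theta_i\bigr]\frac{\cos(p\cdot\vec\theta)-1}{h(\vec\theta)}$,
the constant contributing $0$ to $\int F_p(dm_n-dm)$. Expanding $\tilde F_p$ in Fourier series on $\bt^d$ and using that $dm_n$ is the normalized counting measure on $\bigl\{\tfrac{2\pi}{2n+1}\vec\jmath:\vec\jmath\in[-n,n]^d\bigr\}$, one obtains the aliasing identity $B_n=\sum_{\vec m\ne0,\;(2n+1)\mid m_i\,\forall i}\widehat{\tilde F_p}(\vec m)$. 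Exploiting that $\tilde F_p$ is uniformly bounded by $C|p|^2$, together with $L^1$-bounds on its gradient (which hold in dimension $d\geq 2$), integration by parts gives decay of $\widehat{\tilde F_p}(\vec m)$ with a polynomial dependence on $|p|$, and summing the aliased tail yields $|B_n|\leq\beta_n(1+|p|^2)$ with $\beta_n\to 0$. Taking $\alpha_n:=C(\eps_n k_n+\beta_n)$ then proves the claim.

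The main obstacle is the discretization term $B_n$: although $\tilde F_p$ is bounded, its gradient has a $|p|^2/|\vec\theta|$ singularity at the origin, so a naive sup-norm (Koksma-type) estimate does not apply. One must separate the scales $|\vec\theta|\gtrless 1/n$ near the singular point of the torus in order to obtain the Fourier-tail decay with the advertised uniform dependence on $p$; in dimension $d=1$ a further one-variable integration by parts is needed, since $\int d\theta/|\theta|$ is no longer finite.
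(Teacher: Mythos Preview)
Your decomposition $Q_n-Q=A_n+B_n$ and the pointwise inequality $|f_p|\leq C(1+|p|^2)\,h$ are correct and are essentially the ingredients the paper also uses. However, there is a genuine error in your treatment of $A_n$: you claim that $\eps_n$ decays geometrically by Lemma~\ref{convergence}(iii). This overlooks that $\l_n=\|A\|-\m_n$ depends on the chemical potential. From the computation in Lemma~\ref{finitek0} one sees that $\eps_n$ contains a term proportional to $|\m_n|$ (coming from $\sqrt{\l_n^2-4}-2d$), and in the general setting of Section~10.1 where Proposition~\ref{Qn-Q} is placed, $\m_n$ is an \emph{arbitrary} sequence tending to~$0$. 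The exponentially small piece you isolate is only the contribution of $1-\tanh((n+1)\th_n)$. Fortunately your bound $|A_n|\leq C(1+|p|^2)\,\eps_n k_n$ can be rescued: since $\eps_n k_n=\int_{\bt^d}\frac{\eps_n}{\eps_n+h}\,dm_n$ has integrand bounded by~$1$, splitting over $B(0,r)$ and its complement gives $\eps_n k_n\leq m_n(B(0,r))+\eps_n/\inf_{B(0,r)^c}h$, and letting $n\to\infty$ then $r\to 0$ shows $\eps_n k_n\to 0$ without any rate assumption on $\m_n$.

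Your Fourier/aliasing approach to $B_n$ is substantially more delicate than necessary, and as written it does not close: a single integration by parts using $\nabla\tilde F_p\in L^1$ yields only $|\widehat{\tilde F_p}(\vec m)|\leq C|p|^2/|\vec m|$, and the aliased tail $\sum_{\vec\ell\ne 0}|(2n+1)\vec\ell|^{-1}$ diverges in every dimension; second derivatives are not in $L^1$ for $d\leq 2$, so the scale-separation you allude to would still need to be carried out. The paper bypasses all of this with a single ball decomposition applied to the full difference $Q_n-Q$: given $\d>0$, pick $r$ so small that the integrals over $B(0,r)$ of both $g(\eps_n,\vec\nu,\cdot)\,dm_n$ and $g(0,\vec\nu,\cdot)\,dm$ are below $\d(1+|\vec\nu|^2)$ (using only your boundedness estimate); on $B(0,r)^c$ the regularization error satisfies $|g(\eps_n,\vec\nu,\cdot)-g(0,\vec\nu,\cdot)|\leq C\eps_n/r^4$, while $|\nabla_\th g(0,\vec\nu,\cdot)|\leq C(1+|\vec\nu|)/r^4$ controls the discretization error $\int g\,(dm_n-dm)$ for $n$ large. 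This gives the result uniformly in $d$, with no Fourier analysis and no dimensional case distinction.
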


\begin{proof}
Indeed, let $g(\eps,\vec\nu,\vec\th)=\displaystyle\frac{ (\frac1d\sum_{i=1}^d\cos\th_i)\cos(\vec\nu\vec\th)-1}{\eps+\sum_{i=1}^d(1-\cos\th_i)}$, with $\vec\nu:=\vec\jmath-\vec k$, and write
\begin{align*}
|Q_n(\d_{\vec\jmath},\d_{\vec k})-Q(\d_{\vec\jmath},\d_{\vec k})| & = \left|
\int_{\bt^d}\left(g(\eps_n,\vec\nu,\vec\th) \ dm_n(\vec\th)
\right.\right.
-\left.\left. g(0,\vec\nu,\vec\th) \ dm(\vec\th)\right)
\right|\\
&\leq
\left|
\int_{B(0,r)}\left(g(\eps_n,\vec\nu,\vec\th) \ dm_n(\vec\th)-g(0,\vec\nu,\vec\th) \ dm(\vec\th)\right)
\right|\\
&+
\int_{B(0,r)^c}|g(\eps_n,\vec\nu,\vec\th) - g(0,\vec\nu,\vec\th)|\ dm_n(\vec\th)\\
&+
\left|
\int_{B(0,r)^c}g(0,\vec\nu,\vec\th) \left(dm_n(\vec\th) - dm(\vec\th)\right)
\right|
\end{align*}
We now fix $\d>0$, and observe that  we may find $r$ independent of $\eps_n,\vec\nu$ and $\vec\th$ such that the first summand of the r.h.s. above is bounded by $\d(1+|\vec\nu|^2)$.
Moreover, on $\bt^d\setminus B(0,r)$, $|g(\eps_n,\vec\nu,\vec\th) - g(0,\vec\nu,\vec\th)|\leq\displaystyle\frac{4\eps_n}{r^4}<\d$  for sufficiently big $n$. Finally $\displaystyle |\nabla g(0,\vec\nu,\vec\th)|\leq\frac4{r^4}(1+|\vec\nu|)$ on $\bt\setminus B(0,r)$, hence the third summand is bounded by $\d(1+|\vec\nu|)$ for sufficiently big $n$.
The thesis follows.
 \end{proof}

In order to continue the analysis of the limit state $\om$, we have to study $\lim_nk_n$, where $k_n$ was defined in (\ref{defkn}). As we shall see, this requires to study the low-dimensional case and the high-dimensional case separately.

\subsection{The comb graph with low-dimensional base graph}
 \label{crv1}

\begin{Prop} \label{kntoinfty}
If $d\leq2$, then $k_n\to+\infty$.
\end{Prop}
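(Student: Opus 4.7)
The plan is a two-step argument: first lower-bound $k_n$ by an integral with a fixed regularization $\delta$ in place of the vanishing $\eps_n$, then let $\delta\downarrow 0$ to recover the fact that the classical Green's function integral of the simple random walk on $\bz^d$ diverges at the origin for $d\leq 2$.

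Write $D(\vec\th):=\sum_{i=1}^d(1-\cos\th_i)$. Since $\eps\mapsto(\eps+x)^{-1}$ is decreasing in $\eps$, for any fixed $\delta>0$ and any $n$ large enough that $\eps_n<\delta$, one has the pointwise bound $\frac{1}{\eps_n+D(\vec\th)}\geq\frac{1}{\delta+D(\vec\th)}$ on all of $\bt^d$, and therefore
\[
k_n \geq \int_{\bt^d}\frac{dm_n(\vec\th)}{\delta+D(\vec\th)}.
\]
For each fixed $\delta>0$ the integrand is continuous and bounded on $\bt^d$, and by construction (see \eqref{eq:deltaMis}) $m_n$ is the empirical measure supported on a uniform grid of $(2n+1)^d$ points, so the integrals above are Riemann sums and converge, as $n\to\infty$, to $\int_{\bt^d}(\delta+D(\vec\th))^{-1}\,dm(\vec\th)$, with $m$ the normalized Lebesgue measure on $\bt^d$. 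Taking $\liminf_n$ first and then $\delta\downarrow 0$ (by monotone convergence in $\delta$) yields
\[
\liminf_n k_n \geq \int_{\bt^d}\frac{dm(\vec\th)}{D(\vec\th)}.
\]

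To finish, I would note that near $\vec\th=0$ one has $D(\vec\th)\sim\frac{1}{2}|\vec\th|^2$, so passing to polar coordinates the integrand behaves like $|\vec\th|^{-2}$, and $\int_0^r\rho^{d-3}\,d\rho=+\infty$ for $d\leq 2$. The right-hand side above is therefore $+\infty$, which forces $k_n\to+\infty$.

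The only step that requires any care is the interchange of the limits $n\to\infty$ and $\eps_n\to 0$, and the monotonicity of $(\eps+x)^{-1}$ in $\eps$ is precisely what decouples them into the two clean sequential limits above, avoiding any dominated-convergence gymnastics with the singular integrand. The threshold $d\leq 2$ is nothing but the recurrence/transience dichotomy for the simple random walk on $\bz^d$, which fits the overall philosophy of the paper in which transience of the base (or of the adjacency matrix) governs the existence of thermodynamic limits with condensate.
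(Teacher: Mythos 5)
Your proof is correct and follows essentially the same route as the paper: bound the regularized kernel from below, compare the discrete measures $m_n$ with the normalized Lebesgue measure $m$, and invoke the divergence of $\int_{\bt^d}|\vec\th|^{-2}\,dm$ for $d\leq2$. The only difference is technical: the paper keeps $\eps_n$ in the integrand and asserts directly that $\int(\eps_n+|\vec\th|^2/2)^{-1}dm_n\geq\int(\eps_n+|\vec\th|^2/2)^{-1}dm$, whereas you freeze the regularization at a fixed $\delta>0$, use ordinary Riemann--sum convergence for the then continuous bounded integrand, and let $\delta\downarrow0$ by monotone convergence --- a slightly more explicit justification of the discrete-to-continuous comparison that the paper leaves implicit.
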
 

\begin{proof}
\[
k_n\geq\int_{\bt^d}\frac{ 1}{\eps_n+\frac{|\vec\th|^2}2}\ dm_n(\vec\th)
\geq\int_{\bt^d}\frac{ 1}{\eps_n+\frac{|\vec\th|^2}2}\ dm(\vec\th)
=const\int_0^\pi\frac{ \th^{d-1}}{\eps_n+\frac{\th^2}2}\ d\th
\]
The thesis follows since the last integral diverges for $d=1,2$ when $\eps_n\to0$.
\end{proof}

 Now we may prove the main result of this subsection.

 \begin{Thm} \label{combdiv}
     Let $\xi,\eta\in\cs_{0}$.  For each sequence $\m_n\to0$,
     \begin{equation*}
    \om_{n}(a^{+}(\xi)a(\eta))=k'_{n}\langle \xi,v_{n}\rangle\langle
    v_{n},\eta\rangle +C_{n}(\xi,\eta)\,,
     \end{equation*}
     where $k'_{n}\to+\infty$, $\langle v_{n},\eta\rangle\to\langle v,\eta\rangle$ for any $\eta\in\cs_0$, with $v_n\in V\La_n$ and $v_n \to v$, the generalized Perron--Frobenius vector on $G^d$ described in Lemma \ref{convergence},  and $C_{n}$ converges to a sesquilinear form with domain
     containing $\cs_{0}$.
 \end{Thm}

\begin{proof}

Let us observe that, with $w_n := \norm{R_{Y_n}(\l_n)\d_0}^{-1}R_{Y_n}(\l_n)\d_0$,
\begin{align*}
\langle\eta_{\vec\jmath},R_{Y_n}(\l_n)P_0R_{Y_n}(\l_n)\xi_{\vec k}\rangle
&=
\langle\eta_{\vec\jmath},R_{Y_n}(\l_n)\d_0\rangle\langle R_{Y_n}(\l_n)\d_0,\xi_{\vec k}\rangle\\
&=
\|R_{Y_n}(\l_n)\d_0\|^2\langle\eta_{\vec\jmath},w_n\rangle\langle w_n,\xi_{\vec k}\rangle.
\end{align*}
Denote by $v_n:= u_n\otimes w_n$, where, here and in the following, we use the definitions in Lemma \ref{convergence} for the vectors $u_n,u,w,v$.

On the one hand, we get
\begin{align*}
\sum_{\vec\jmath, \vec k}\langle\eta_{\vec\jmath},R_{Y_n}(\l_n)P_0R_{Y_n}(\l_n)\xi_{\vec k}\rangle
=
\|R_{Y_n}(\l_n)\d_0\|^2\langle\eta,v_n\rangle\langle v_n,\xi\rangle,
\end{align*}
and
\begin{align*}
\langle v-v_n,\xi\rangle
&=
\left|\sum_{\vec\jmath}
\langle u,\d_{\vec\jmath}\rangle\langle w, \xi_{\vec\jmath}\rangle - 
\langle u_n,\d_{\vec\jmath}\rangle\langle w_n, \xi_{\vec\jmath}\rangle
\right| \\
&\leq \sum_{\vec\jmath\not\in[-n,n]^d}\|\xi_{\vec\jmath}\| + 
\|w-w_n\| \sum_{\vec\jmath\in[-n,n]^d}
\|\xi_{\vec\jmath}\|\\
\end{align*}
which tends to 0, since $w_n\to w$ in norm, and the sum $\sum_{\vec\jmath}
\|\xi_{\vec\jmath}\|$ is finite, since $\xi$ is in $\cs_0$. We have thus proved that the term (\ref{3sum}) in Lemma \ref{2pfDec} gives the first summand in the statement, with $k'_n=\b^{-1}2d(d+\eps_n)k_n\|R_{Y_n}(\l_n)\d_0\|^2$.

Again by Lemma \ref{StrongConv}, the term (\ref{1sum}) in Lemma \ref{2pfDec} converges to
$$
\langle\eta,\left((e^{\b H}-I)^{-1}-(\b H)^{-1}\right)\xi\rangle
+ \frac1\b\langle\eta,I\otimes R_{\bz}(\l_\infty)\xi\rangle,
$$
where $\l_{\infty} := 2\sqrt{d^2+1}$. As for the term (\ref{2sum}) in Lemma \ref{2pfDec}, we want to show that it converges to 
\[
\frac{2d^2}\b\sum_{\vec\jmath, \vec k}Q(\d_{\vec\jmath},\d_{\vec k})\langle\eta_{\vec\jmath},R_{\bz}(\l_\infty)P_0R_{\bz}(\l_\infty)\xi_{\vec k}\rangle,
\]
or, equivalently, that
\[
\sum_{\vec\jmath, \vec k\in [-n,n]^d}Q_n(\d_{\vec\jmath},\d_{\vec k})\langle\eta_{\vec\jmath},w_n\rangle\langle w_n,\xi_{\vec k}\rangle
\to
\sum_{\vec\jmath, \vec k}Q(\d_{\vec\jmath},\d_{\vec k})\langle\eta_{\vec\jmath},w\rangle\langle w,\xi_{\vec k}\rangle
\]
where $Q$ was defined in (\ref{defQ}). Indeed
\begin{align*}
&\left|
\sum_{\vec\jmath, \vec k}Q(\d_{\vec\jmath},\d_{\vec k})\langle\eta_{\vec\jmath},w\rangle\langle w,\xi_{\vec k}\rangle
-
\sum_{\vec\jmath, \vec k\in [-n,n]^d}Q_n(\d_{\vec\jmath},\d_{\vec k})\langle\eta_{\vec\jmath},w_n\rangle\langle w_n,\xi_{\vec k}\rangle
\right|
\\
&\leq\sum_{\vec\jmath, \vec k\in[-n,n]^d}|
Q(\d_{\vec\jmath},\d_{\vec k})-Q_n(\d_{\vec\jmath},\d_{\vec k})|\ \|\eta_{\vec\jmath}\|\ \|\xi_{\vec k}\|  \\
&+\sum_{\vec\jmath, \vec k\not\in[-n,n]^d}
|Q(\d_{\vec\jmath},\d_{\vec k})|\ \|\eta_{\vec\jmath}\|\ \|\xi_{\vec k}\| \\
& + \sum_{\vec\jmath, \vec k\in[-n,n]^d}
|Q_n(\d_{\vec\jmath},\d_{\vec k})|\ |\langle\eta_{\vec\jmath},w\rangle\langle w,\xi_{\vec k}\rangle -
\langle\eta_{\vec\jmath},w_n\rangle\langle w_n,\xi_{\vec k}\rangle |.
\end{align*}
According to Proposition \ref{Qn-Q}, the first summand on the r.h.s. is majorized by
\[
\a_n\sum_{\vec\jmath, \vec k}(1+|\vec\jmath-\vec k|)^2\|\eta_{\vec\jmath}\|\ \|\xi_{\vec k}\|
\]
which tends to zero since $\a_n$ does and the sum is finite because $\eta,\xi\in\cs_0$.

Since, by definition of $Q$, $|Q(\d_{\vec\jmath},\d_{\vec k})|\leq (1+|\vec\jmath-\vec k|)^2$, the second summand
is majorized by
\[
\sum_{\vec\jmath, \vec k\not\in[-n,n]^d}(1+|\vec\jmath-\vec k|)^2\|\eta_{\vec\jmath}\|\ \|\xi_{\vec k}\|
\]
which tends to zero since the sum is finite as above.

In the last summand, we again have 
$|Q_n(\d_{\vec\jmath},\d_{\vec k})|\leq (1+|\vec\jmath-\vec k|)^2$, and 
\begin{align*}
|\langle\eta_{\vec\jmath},w\rangle\langle w,\xi_{\vec k}\rangle -
\langle\eta_{\vec\jmath},w_n\rangle\langle w_n,\xi_{\vec k}\rangle |
\leq
2\|w-w_n\|\ \|\eta_{\vec\jmath}\|\ \|\xi_{\vec k}\|.
\end{align*}
Therefore the last summand is bounded by
\[
2\|w-w_n\|\sum_{\vec\jmath, \vec k}(1+|\vec\jmath-\vec k|)^2\|\eta_{\vec\jmath}\|\ \|\xi_{\vec k}\|,
\]
hence tends to 0 as before.
\end{proof}

\begin{Rem}
\itm{i} Theorem \ref{combdiv} tells us that it is impossible to construct any locally normal states describing BEC (i.e. whenever $\m_n\to0$) on the combs $\bz^{d} \comb \bz$, $d=1,2$, see Lemma 
\ref{add2}. In addition, the divergence of the two--point function depends only on the amount of condensate since $C_n$ in Theorem \ref{combdiv} converges to a sesquilinear form which is finite on $\cs_0$.

\itm{ii} Let us observe that the lack of a locally normal thermodynamic state describing condensation does not mean that physically the condensation does not occur. It indeed means that in nonhomogeneous networks, particles condensate even in the configuration space, due to the shape of the wave function of the ground state. Then more and more particles tend to lay in the low energy spectrum and along the base space. The system cannot accommodate them.
\end{Rem}
 
\subsection{The comb graph  with high-dimensional base graph}

We will show here that, for the comb $G^{d}=\bz^{d} \comb \bz$, $d\geq3$, it is possible to construct infinite volume locally normal KMS states. As before, we will consider the finite volume approximations $\La_{n}=X_n\comb Y_n$ of $G^{d}$, where $X_n=(\bz_{2n+1})^{d}$ (periodic boundary condition on the base graph), and $Y_n$ is the line graph $\{-n,\dots,0,\dots,n\}$. Our aim is to show that, for a carefully chosen sequence of chemical potentials $\m_n\to0$, we obtain locally normal thermodynamical states exhibiting Bose-Einstein condensation.

Recalling the discussion above, the estimate from below of $k_n$ in Proposition \ref{kntoinfty} does not imply that $k_n\to\infty$ when $d\geq3$. In order to describe its behavior we have to split $k_n$ in two parts, the zero component of the integral and the rest, $k_n=k^0_n+k^+_n$, with
\begin{align*}
&k^0_n=\frac1{(2n+1)^d\eps_n},
\\
&k^+_n=
\int_{\bt^d\setminus\{\vec 0\}}\frac{ 1}{\eps_n+\sum_{i=1}^d(1-\cos\th_i)}\ dm_n(\vec\th). 
\end{align*}
Our aim is to show that
\[
\lim_n \int_{\bt^d\setminus\{\vec 0\}}\frac{ 1}{\eps_n+\sum_{i=1}^d(1-\cos\th_i)}\ dm_n(\vec\th)
=\int_{\bt^d}\frac{ 1}{\sum_{i=1}^d(1-\cos\th_i)}\ dm(\vec\th).
\]

Let us set $\f(\vec\th)=\sum_{j=1}^d (1-\cos\th_j)$. The integrand is positive, hence, for any $\eps>0$ we have, for $n$ large enough,
\[
\begin{matrix}
\displaystyle
\int_{\bt^d\setminus\{\vec 0\}}\frac{dm_n(\vec\vartheta)}{\eps+\f(\vec\th)}&\leq&
\displaystyle
\int_{\bt^d\setminus\{\vec 0\}}\frac{dm_n(\vec\vartheta)}{\eps_n+\f(\vec\th)}&\leq&
\displaystyle
\int_{\bt^d\setminus\{\vec 0\}}\frac{dm_n(\vec\vartheta)}{\f(\vec\th)}\\
\vphantom{A}& & & & \\
\downarrow n\to\infty& & & & \\
\vphantom{A}& & & & \\
\displaystyle
\int_{\bt^d}\frac{dm(\vec\vartheta)}{\eps+\f(\vec\th)} & & 
\begin{matrix}\eps\to0 \\ \longrightarrow\end{matrix} & &
\displaystyle
\int_{\bt^d}\frac{dm(\vec\vartheta)}{\f(\vec\th)}
\end{matrix}
\]
The result will then follow if 
\[
\int_{\bt^d\setminus\{\vec 0\}}\frac{1}{\f(\vec\th)}\ (dm_n(\vec\vartheta) - dm(\vec\vartheta))\to 0.
\]
Such integral can be rewritten as  
$$
\int_{\bt^d\setminus\{\vec 0\}}\left(\frac{1}{\f(\vec\th)}-\frac{1}{|\vec\th|^2}\right)\ (dm_n(\vec\vartheta) - dm(\vec\vartheta))
+
\int_{\bt^d\setminus\{\vec 0\}}\frac{1}{|\vec\th|^2}\ (dm_n(\vec\vartheta) - dm(\vec\vartheta)).
$$

The first term tends to zero because of the following Lemma.

\begin{Lemma}\label{aecont}
Let $F$ be a bounded function on $[-\pi,\pi]^d$ which is continuous but in zero. Then
\[
\int_{[-\pi,\pi]^d\setminus \{\vec0\}}F(\vec\th)\ dm_n(\vec\vartheta) \to
\int_{[-\pi,\pi]^d}F(\vec\th)\  dm(\vec\vartheta)
\]
\end{Lemma}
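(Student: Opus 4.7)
The plan is to reduce to the standard weak convergence $m_n \to m$ on continuous functions, isolating the singularity at $\vec 0$ by a cutoff argument.

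First I would record the basic fact that for any continuous function $G$ on $[-\pi,\pi]^d$ (viewed as the torus $\bt^d$), one has $\int G\, dm_n \to \int G\, dm$, since $m_n$ is precisely the uniform Riemann-sum measure corresponding to a partition of mesh $2\pi/(2n+1)$, and $G$ is uniformly continuous. Moreover $m_n(\{\vec 0\}) = (2n+1)^{-d}\to 0$, so removing $\vec 0$ from the domain of integration in $\int F\, dm_n$ only affects the integral by at most $\|F\|_\infty(2n+1)^{-d}$, which is negligible. In what follows I may therefore work with $\int F\, dm_n$ without the puncture.

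Next, fix $\eps>0$. Since Lebesgue measure has no atom at $\vec 0$, choose $\delta>0$ so small that $\|F\|_\infty\, m(\overline{B(\vec 0,2\delta)}) < \eps$. Construct a continuous cutoff $\chi_\delta:[-\pi,\pi]^d\to[0,1]$ with $\chi_\delta=0$ on $B(\vec 0,\delta)$ and $\chi_\delta=1$ outside $B(\vec 0,2\delta)$, and split
\[
F = F\chi_\delta + F(1-\chi_\delta).
\]
The function $F\chi_\delta$ is continuous on $[-\pi,\pi]^d$ (it is continuous away from $\vec 0$ by hypothesis on $F$, and vanishes on a neighborhood of $\vec 0$), so by the first paragraph, $\int F\chi_\delta\, dm_n \to \int F\chi_\delta\, dm$.

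Finally, the remainder is controlled by boundedness. On the one hand $\bigl|\int F(1-\chi_\delta)\, dm\bigr| \leq \|F\|_\infty\, m(\overline{B(\vec 0,2\delta)}) < \eps$. On the other hand, $1-\chi_\delta$ is continuous and supported in $\overline{B(\vec 0,2\delta)}$, so $\int (1-\chi_\delta)\, dm_n \to \int (1-\chi_\delta)\, dm \leq m(\overline{B(\vec 0,2\delta)})$, whence $\bigl|\int F(1-\chi_\delta)\, dm_n\bigr| \leq \|F\|_\infty \int (1-\chi_\delta)\, dm_n < 2\eps$ for $n$ large enough. Combining the three pieces gives $\bigl|\int_{[-\pi,\pi]^d\setminus\{\vec 0\}} F\, dm_n - \int F\, dm\bigr| < 4\eps$ for all sufficiently large $n$, and since $\eps$ was arbitrary the proof is complete. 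The argument is essentially bookkeeping; the only mild subtlety is that $F$ is not continuous at $\vec 0$, but this is circumvented by the cutoff and by the fact that $m_n(\{\vec 0\})\to 0$.
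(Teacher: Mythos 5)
Your proof is correct and follows essentially the same route as the paper: isolate a small neighbourhood of $\vec 0$, where boundedness of $F$ and the smallness of its $m$- and $m_n$-mass control the contribution, and use uniform continuity (equivalently, Riemann-sum convergence of $m_n$ to $m$ on continuous functions) on the complement. Your use of a continuous cutoff $\chi_\delta$ instead of the paper's sharp splitting of the domain at $[-\delta,\delta]^d$ is only a cosmetic variation, though it does make explicit the bound on the $m_n$-mass near the origin (and the atom $m_n(\{\vec 0\})\to 0$), which the paper leaves implicit.
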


\begin{proof}
Let $\|F\|_\infty=C$, and, for a given $\eps>0$, choose $\d$ such that $(2\d)^dC<\eps/2$. Clearly $F$ is uniformly continuous on $\left([-\d,\d]^d\right)^c$, therefore there exists an $n$ such that 
\[
\left|
\int_{\left([-\d,\d]^d\right)^c}F(\vec\th)\ dm_n(\vec\vartheta)
-
\int_{\left([-\d,\d]^d\right)^c}F(\vec\th)\  dm(\vec\vartheta)
\right|<\eps/2.
\]
With such choice
\[
\left|
\int_{[-\pi,\pi]^d\setminus \{\vec0\}}F(\vec\th)\ dm_n(\vec\vartheta) -
\int_{[-\pi,\pi]^d}F(\vec\th)\  dm(\vec\vartheta)
\right|<\eps,
\]
and the Lemma is proved.
\end{proof}

The second term tends to 0 as shown by the following Lemma.
\begin{Lemma}\label{quadraticdiv}
    Let $d\geq 3$.  Then
    $$
    \lim_n
    \int_{\bt^d\setminus\{\vec 0\}}\frac{1}{|\vec\th|^2}\ dm_n(\vec\vartheta)
    =
    \int_{\bt^d}\frac{1}{|\vec\th|^2}\ dm(\vec\vartheta).
    $$
\end{Lemma}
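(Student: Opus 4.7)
The plan is to exploit the fact that for $d\geq 3$ the function $|\vec\th|^{-2}$ is Lebesgue integrable on $\bt^d$, since near $\vec 0$ the singularity has order $r^{-2}$ while the volume element contributes $r^{d-1}$. Given $\eps>0$, I would cut a ball $B_\d$ of radius $\d$ around $\vec 0$, handle the complement via Lemma \ref{aecont}, and control the contributions near the singularity --- both the continuous integral over $B_\d$ and the discrete integral over $B_\d\setminus\{\vec 0\}$ --- uniformly in their respective parameters.

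On $\bt^d\setminus B_\d$ the function $|\vec\th|^{-2}$ is bounded and continuous, so Lemma \ref{aecont} applies directly, giving convergence of $\int_{\bt^d\setminus B_\d}|\vec\th|^{-2}\,dm_n$ to $\int_{\bt^d\setminus B_\d}|\vec\th|^{-2}\,dm$ for each fixed $\d$. Moreover, integrability gives $\int_{B_\d}|\vec\th|^{-2}\,dm\to 0$ as $\d\downarrow 0$. So only the discrete measure near the singularity remains to be estimated.

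Writing grid points as $\vec\th=\frac{2\pi}{2n+1}\vec\jmath$ with $0<|\vec\jmath|\leq R_n:=\d(2n+1)/(2\pi)$, one has
$$\int_{B_\d\setminus\{\vec 0\}}\frac{dm_n(\vec\vartheta)}{|\vec\th|^2}=\frac{1}{(2\pi)^2(2n+1)^{d-2}}\sum_{0<|\vec\jmath|\leq R_n}\frac{1}{|\vec\jmath|^2}.$$
The key step is then the lattice estimate $\sum_{0<|\vec\jmath|\leq R}|\vec\jmath|^{-2}\leq C_d R^{d-2}$, valid for $d\geq 3$, which I would prove by a spherical-shell comparison with the Lebesgue integral $\int_{1\leq|\vec x|\leq R}|\vec x|^{-2}\,d\vec x=O(R^{d-2})$, handling the finite contribution of the innermost lattice points ($|\vec\jmath|=1$) separately. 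Substituting $R=R_n$ produces the uniform bound $\int_{B_\d\setminus\{\vec 0\}}|\vec\th|^{-2}\,dm_n\leq C'_d\,\d^{d-2}$, which vanishes as $\d\downarrow 0$ independently of $n$.

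Finally, I would combine these pieces by an $\eps/3$-argument: first choose $\d$ small so that both the Lebesgue integral over $B_\d$ and the uniform bound $C'_d\d^{d-2}$ are below $\eps/3$; then choose $n$ large so that Lemma \ref{aecont} yields the complementary difference below $\eps/3$. The main obstacle is the lattice sum estimate, which is also the only place where $d\geq 3$ enters essentially; for $d\leq 2$ the analogous discrete integral $\int_{B_\d\setminus\{\vec 0\}}|\vec\th|^{-2}\,dm_n$ blows up with $n$, consistent with Proposition \ref{kntoinfty}.
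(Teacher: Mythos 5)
Your argument is correct, but it follows a genuinely different route from the paper. You isolate the singularity by cutting out a ball $B_\d$ around the origin: away from $\vec 0$ you invoke the weak convergence of the grid measures (Lemma \ref{aecont}), while near $\vec 0$ you bound the discrete mass directly through the lattice-sum estimate $\sum_{0<|\vec\jmath|\leq R}|\vec\jmath|^{-2}\leq C_d R^{d-2}$, which after rescaling gives the $n$-uniform bound $C'_d\,\d^{d-2}$; an $\eps/3$ argument finishes. The paper instead proves the two inequalities separately by a Riemann-sum comparison over the cubes $C_{\vec\jmath}$ (inequality \eqref{latticeapprox}): the lower bound comes immediately, while the upper bound requires decomposing the lattice according to the number of nonvanishing components of $\vec\jmath$, comparing the terms with at least three nonzero components to lower-dimensional continuous integrals weighted by $(2n+1)^{j-d}$, and extracting the nearest lattice point by hand in the cases $j=1,2$. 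Your uniform shell-counting bound treats all lattice points (including those on coordinate hyperplanes) at once, so you avoid this combinatorial case analysis, and it makes transparent that $d\geq 3$ enters only through the exponent $d-2$; the paper's approach, on the other hand, stays entirely within the monotone cube comparison already set up for the rest of the section. One small point to tidy up: Lemma \ref{aecont} is stated for functions continuous off $\vec 0$, whereas your cut-off integrand $|\vec\th|^{-2}\chi_{\bt^d\setminus B_\d}$ is discontinuous on the sphere $\partial B_\d$; replace the sharp indicator by a continuous radial mollifier (as the paper does elsewhere), or note that the same Riemann-sum argument works for bounded functions whose discontinuity set is Lebesgue negligible -- either repair is routine and does not affect the structure of your proof.
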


\begin{proof}
Since the domains of our integrals are contained in $[-\pi,\pi]^d$, we may assume
$$
m_n=\frac1{(2n+1)^d}\sum_{\vec\jmath\in\bz^d}\d_{\frac{2\pi}{2n+1}\vec\jmath}.
$$
Denote by $C_{\vec\jmath}$ the square 
$$\prod_{i=1}^{d}[\frac{2\pi}{2n+1}\vec\jmath_i,\frac{2\pi}{2n+1}(\vec\jmath_i+1))$$
and by $\vec1$ the vector with components constantly equal to 1, then, for any $\vec\jmath$ such that $j_i>0$, $i=1,...,d$, we have
\begin{equation}
\label{latticeapprox}
\int_{C_{\vec\jmath+\vec1}}\frac{1}{|\vec\th|^2}\ dm_n(\vec\vartheta)
\leq
\int_{C_{\vec\jmath}}\frac{1}{|\vec\th|^2}\ dm(\vec\vartheta)
\leq
\int_{C_{\vec\jmath}}\frac{1}{|\vec\th|^2}\ dm_n(\vec\vartheta).
\end{equation}
As a consequence we have
\begin{align*}
	\int_{\bt^d} \frac{1}{|\vec\th|^2}\ dm(\vec\vartheta)  = 
	2^d \lim_{n\to\infty}\int_{[\frac{2\pi}{2n+1},\pi)^d} \frac{1}{|\vec\th|^2}\ dm(\vec\vartheta) 
	 \leq \lim_{n\to\infty} \int_{\bt^d\setminus\{\vec 0\}}
	\frac{1}{|\vec\th|^2}\ dm_n(\vec\vartheta).
\end{align*}

We now prove the opposite inequality. We decompose the lattice $$\{\frac{2\pi}{2n+1}\vec\jmath:\vec\jmath\in\bz^d\}\cap \bt^d$$ according to the number of non-zero components of $\vec\jmath$. Therefore, setting
$\bt_n^{d+} = \set{\frac{2\pi}{2n+1}\vec k\in (0,\pi]^d : \vec k\in \bz^d}$, we get
\begin{equation}
\label{zerocomp}
	\int_{\bt^d\setminus\{\vec 0\}}\frac{1}{|\vec\th|^2}\ dm_n(\vec\vartheta)=
	\sum_{j=1}^d \begin{pmatrix}d\\j\end{pmatrix}2^j (2n+1)^{j-d}
	\int_{\bt_n^{j+}}\frac{1}{|\vec\th|^2}\ dm_n(\vec\vartheta).
\end{equation}
If $ j\geq3$, we use inequality (\ref{latticeapprox}) to get
\begin{equation}
\label{estimate}
(2n+1)^{j-d}\int_{\bt_n^{j+}}\frac{1}{|\vec\th|^2}\ dm_n(\vec\vartheta)
\leq
(2n+1)^{j-d}\int_{[0,\pi]^j}\frac{1}{|\vec\th|^2}\ dm(\vec\vartheta)
\end{equation}
and note that the r.h.s. tends to zero if $j<d$.

For $j=1,2$, we decompose $\bt_n^{j+}$ as 
$\{\frac{2\pi}{2n+1}\vec 1\}\cup \set{\frac{2\pi}{2n+1}\vec k\in (0,\pi]^d:\vec k\neq\vec1}$. Hence,
again by inequality (\ref{latticeapprox}), we get
\begin{align*}
(2n+1)^{j-d}\int_{\bt_n^{j+}}\frac{1}{|\vec\th|^2}\ dm_n(\vec\vartheta)
&\leq
\frac1{(2n+1)^{d}}\left(\frac{2n+1}{2\pi}\right)^2\frac{1}{j}\\
& + 2^{-j}(2n+1)^{j-d}\int_{\bt^j\setminus B(0,\frac{2\pi}{2n+1})}\frac{1}{|\vec\th|^2}\ dm(\vec\vartheta).
\end{align*}
Both summands on the r.h.s. are infinitesimal. Therefore equations (\ref{zerocomp}) and (\ref{estimate}) give
\begin{equation*}
	\lim_n\int_{\bt^d\setminus\{\vec 0\}}\frac{1}{|\vec\th|^2}\ dm_n(\vec\vartheta)\leq 
	\int_{\bt^d}\frac{1}{|\vec\th|^2}\ dm(\vec\vartheta)
\end{equation*}
\end{proof}

We have proved the following Lemma.
\begin{Lemma}
\[
\lim_n k^+_n = \int_{\bt^d}\frac{ 1}{\sum_{i=1}^d(1-\cos\th_i)}\ dm(\vec\th).
\]
\end{Lemma}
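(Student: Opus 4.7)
The plan is to squeeze $k^+_n$ between two quantities whose limits are easy to compute, exactly along the lines of the diagram displayed just before the Lemma. Set $\f(\vec\th):=\sum_{j=1}^d(1-\cos\th_j)$. Fix $\eps>0$; since $\eps_n\to 0$, for all $n$ large enough we have $0<\eps_n\leq\eps$, hence
\[
\int_{\bt^d\setminus\{\vec 0\}}\frac{dm_n(\vec\th)}{\eps+\f(\vec\th)}\ \leq\ k^+_n\ \leq\ \int_{\bt^d\setminus\{\vec 0\}}\frac{dm_n(\vec\th)}{\f(\vec\th)}.
\]
The goal is to show that the lower bound converges to $\int_{\bt^d}\frac{dm}{\eps+\f}$ and the upper bound to $\int_{\bt^d}\frac{dm}{\f}$; then letting $\eps\downarrow0$ in the first integral by monotone convergence delivers the lemma.

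For the lower bound, the integrand $1/(\eps+\f)$ is a bounded continuous function on $\bt^d$, so Lemma \ref{aecont} (applied to its restriction to $\bt^d\setminus\{\vec 0\}$) gives the required convergence directly.

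For the upper bound, following the hint in the paragraph above the lemma, I would split
\[
\int_{\bt^d\setminus\{\vec 0\}}\frac{dm_n-dm}{\f(\vec\th)}
= \int_{\bt^d\setminus\{\vec 0\}}\!\Bigl(\frac{1}{\f(\vec\th)}-\frac{1}{|\vec\th|^2}\Bigr)(dm_n-dm)
+ \int_{\bt^d\setminus\{\vec 0\}}\frac{dm_n-dm}{|\vec\th|^2}.
\]
The second term is handled by Lemma \ref{quadraticdiv}. For the first term, the function $F(\vec\th):=\f(\vec\th)^{-1}-|\vec\th|^{-2}$ is continuous on $\bt^d\setminus\{\vec 0\}$; moreover, the Taylor expansion $\f(\vec\th)=\tfrac12|\vec\th|^2+O(|\vec\th|^4)$ near the origin yields
\[
F(\vec\th)=\frac{|\vec\th|^2-\f(\vec\th)}{\f(\vec\th)|\vec\th|^2}=O(1)\qquad\text{as }\vec\th\to\vec 0,
\]
so $F$ is bounded on $\bt^d$ and continuous except possibly at $\vec 0$. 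Hence Lemma \ref{aecont} applies and the first term tends to zero as well. Combining these two facts shows $\int_{\bt^d\setminus\{\vec 0\}}dm_n/\f\to\int_{\bt^d}dm/\f$, which is the upper bound claim.

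Finally, taking $n\to\infty$ in the sandwich, then $\eps\downarrow 0$ by monotone convergence (the integrand $1/(\eps+\f)$ increases monotonically to the integrable function $1/\f$ as $\eps\downarrow 0$, integrability being guaranteed by $d\ge 3$ and $\f\sim|\vec\th|^2/2$), we obtain $\lim_n k_n^+=\int_{\bt^d}dm/\f$, as required. The main technical point is the verification that $F$ extends to a bounded function at the origin, which is precisely what permits the reduction of the general denominator $\f$ to the quadratic model problem already solved in Lemma \ref{quadraticdiv}.
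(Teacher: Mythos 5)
Your overall strategy is exactly the paper's: the same sandwich between $\int(\eps+\f)^{-1}dm_n$ and $\int\f^{-1}dm_n$, Lemma \ref{aecont} for the lower bound, the splitting of $\f^{-1}$ into a piece meant for Lemma \ref{aecont} plus the model singularity $|\vec\th|^{-2}$ handled by Lemma \ref{quadraticdiv}, and monotone convergence in $\eps$ at the end. The one step that fails as written is the claim that $F(\vec\th)=\f(\vec\th)^{-1}-|\vec\th|^{-2}$ is bounded near the origin. Since $1-\cos\th_j=\th_j^2/2+O(\th_j^4)$, one has $\f(\vec\th)=\tfrac12|\vec\th|^2+O(|\vec\th|^4)$, hence $\f(\vec\th)^{-1}\sim 2|\vec\th|^{-2}$ and $F(\vec\th)\sim|\vec\th|^{-2}\to+\infty$; in your own formula $F=(|\vec\th|^2-\f)/(\f|\vec\th|^2)$ the numerator is $\sim\tfrac12|\vec\th|^2$, not $O(|\vec\th|^4)$, so $F$ is not $O(1)$ and Lemma \ref{aecont} does not apply to it.

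The repair is minimal and keeps you on the paper's route: compare with $2|\vec\th|^{-2}$ instead of $|\vec\th|^{-2}$. Indeed $2\f(\vec\th)-|\vec\th|^2=\sum_j\bigl(2(1-\cos\th_j)-\th_j^2\bigr)=O(|\vec\th|^4)$, so $\f^{-1}-2|\vec\th|^{-2}=(|\vec\th|^2-2\f)/(\f|\vec\th|^2)$ is bounded on $[-\pi,\pi]^d$ and continuous off the origin, hence Lemma \ref{aecont} applies to it, while $\int 2|\vec\th|^{-2}(dm_n-dm)\to0$ follows from Lemma \ref{quadraticdiv} by linearity. (The paper's displayed decomposition also writes $|\vec\th|^{-2}$, so you have faithfully reproduced the intended argument; but since your proposal makes the boundedness verification explicit and calls it the main technical point, it must be stated with the constant $2$, otherwise the step is simply false.) Everything else — the treatment of the lower bound via Lemma \ref{aecont}, the final monotone convergence argument and the integrability of $\f^{-1}$ for $d\geq3$ — matches the paper and is correct.
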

This result, together with the definition of $\Phi_n$ and Proposition \ref{Qn-Q}, gives
\begin{Prop}
\begin{equation}
\label{decompPhiBis}
(2d(d+\eps_n))^{-1}\langle\d_{\vec\jmath},\Phi_n\d_{\vec k}\rangle=
\begin{cases}
k^0_n + k^+_n + Q_n(\d_{\vec\jmath},\d_{\vec k}) & \vec\jmath,\vec k\in[-n,n]^d,\\
0&\vec\jmath,\vec k\not\in[-n,n]^d.
\end{cases}
\end{equation}
There exists an infinitesimal sequence $\a''_n$ such that
\begin{equation}
\label{ineqalpha''}
\left|\left(k^+_n + Q_n(\d_{\vec\jmath},\d_{\vec k})\right) -
(2d^2)^{-1}\langle\d_{\vec\jmath},\Phi\d_{\vec k}\rangle\right|\leq\a''_n(1+|\vec\jmath-\vec k|^2),
\end{equation}
where $\Phi:=2d\ R_{\bz^d}(2d)A_{\bz^d}$, so that, by Fourier transform,
 \begin{equation*}
\langle\d_{\vec\jmath},\Phi\d_{\vec k}\rangle=2d^2\int_{\bt^d}\frac{ (\frac1d\sum_{i=1}^d\cos\th_i)\cos((\vec\jmath-\vec k)\vec\th)}{\sum_{i=1}^d(1-\cos\th_i)}\ dm(\vec\th).
\end{equation*}

\end{Prop}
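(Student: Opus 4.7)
The plan is to handle the two assertions separately. For the equality I would derive a Fourier-integral expression for $\langle\d_{\vec\jmath},\Phi_n\d_{\vec k}\rangle$ by diagonalizing $A_{X_n}$ on the discrete torus, then perform two successive splittings (numerator $=1+(\text{rest})$, and mass at $\vec\th=\vec 0$ versus the rest). For the inequality I would identify $(2d^2)^{-1}\langle\d_{\vec\jmath},\Phi\d_{\vec k}\rangle$ with $\int_{\bt^d}dm/\f+Q(\d_{\vec\jmath},\d_{\vec k})$ and combine the preceding Lemma (convergence of $k^+_n$) with Proposition \ref{Qn-Q} (convergence of $Q_n$) via a triangle inequality.

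Concretely, for the first part, using \eqref{seteps} one rewrites $\Phi_n=2(d+\eps_n)\bigl(2(d+\eps_n)I-A_{X_n}\bigr)^{-1}A_{X_n}$. Since $A_{X_n}$ is diagonalized by the discrete Fourier transform on $(\bz_{2n+1})^d$ with eigenvalues $2\sum_i\cos\th_i$ on the lattice frequencies supporting $dm_n$ (cf.\ \eqref{eq:deltaMis}), this produces, for $\vec\jmath,\vec k\in[-n,n]^d$,
\[
\langle\d_{\vec\jmath},\Phi_n\d_{\vec k}\rangle=2d(d+\eps_n)\int_{\bt^d}\frac{(\tfrac1d\sum\cos\th_i)\cos((\vec\jmath-\vec k)\vec\th)}{\eps_n+\f(\vec\th)}\,dm_n(\vec\th),
\]
and zero otherwise by orthogonality of distinct Fourier modes. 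After dividing by $2d(d+\eps_n)$, the splitting $\text{numerator}=1+(\text{numerator}-1)$ recovers $k_n+Q_n$; since the added $-1$ makes the new numerator vanish at $\vec\th=\vec 0$, isolating the atom $\{\vec 0\}$ of $dm_n$ affects only $k_n$, and it contributes precisely $1/((2n+1)^d\eps_n)=k^0_n$. This gives $k_n=k^0_n+k^+_n$ and hence \eqref{decompPhiBis}.

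For the inequality, the Fourier expression for $\Phi$ is obtained analogously by diagonalizing $A_{\bz^d}$, the point being that integrability of $1/\f$ on $\bt^d$ — equivalent to transience of the simple random walk on $\bz^d$ for $d\geq 3$ — is what makes $R_{\bz^d}(2d)$ well-defined on compactly supported vectors and yields the stated kernel formula for $\Phi$. Adding and subtracting $1$ in the numerator,
\[
(2d^2)^{-1}\langle\d_{\vec\jmath},\Phi\d_{\vec k}\rangle=\int_{\bt^d}\frac{dm(\vec\th)}{\f(\vec\th)}+Q(\d_{\vec\jmath},\d_{\vec k}).
\]
The preceding Lemma supplies an infinitesimal $\a'_n:=|k^+_n-\int dm/\f|$ \emph{independent of} $\vec\jmath,\vec k$, while Proposition \ref{Qn-Q} gives $|Q_n(\d_{\vec\jmath},\d_{\vec k})-Q(\d_{\vec\jmath},\d_{\vec k})|\leq\a_n(1+|\vec\jmath-\vec k|^2)$. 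A single triangle inequality then yields the stated bound with $\a''_n:=\a'_n+\a_n$.

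The argument is essentially bookkeeping once the preceding Lemma and Proposition \ref{Qn-Q} are in hand; the only conceptual obstacle, which I would want to flag explicitly, is that the dimensional threshold $d\geq 3$ enters at exactly one point, namely to give meaning to the limit object $\Phi$ (equivalently to $\int dm/\f$). In dimensions $1,2$ Proposition \ref{kntoinfty} shows $k^+_n$ itself to diverge, and the formulation would collapse.
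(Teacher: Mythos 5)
Your proposal is correct and follows essentially the same route as the paper: the Fourier-integral kernel for $\Phi_n$ and the splitting into $k_n+Q_n$ (with $k_n=k^0_n+k^+_n$ by isolating the atom at $\vec 0$ of $m_n$) are exactly the paper's equations \eqref{decompPhi}--\eqref{defQn} and \eqref{defk0n}--\eqref{defk+n}, and the paper likewise obtains \eqref{ineqalpha''} by combining the Lemma on $\lim_n k^+_n$ with Proposition \ref{Qn-Q} via the identity $(2d^2)^{-1}\langle\d_{\vec\jmath},\Phi\d_{\vec k}\rangle=\int_{\bt^d}dm/\f+Q(\d_{\vec\jmath},\d_{\vec k})$. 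Your explicit flagging of where $d\geq 3$ enters (finiteness of $\int_{\bt^d}dm/\f$, i.e.\ transience) is consistent with the paper and is a useful clarification, not a deviation.
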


\subsection{The choice of $\m_n$.}\label{subsec2}

In order to have a finite limit for $k^0_n$ we have to assume that $\lim_n((2n+1)^d\eps_n)^{-1}$ is finite. The following holds.

\begin{Lemma}\label{finitek0}
$$
\lim_n \frac{1}{-\m_n \vol(X_n)}=c<\infty 
\Leftrightarrow
\lim_n k^0_n=\lim_n \frac{1}{\eps_n (2n+1)^d}=c\frac{2d}{\sqrt{d^2+1}}<\infty.
$$
\end{Lemma}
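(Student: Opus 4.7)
The plan is to derive an explicit asymptotic identity of the form $\eps_n = \frac{\sqrt{d^2+1}}{2d}(-\m_n)+O(\m_n^2)+O(e^{-cn})$ and then read off the two-sided equivalence by elementary algebra, using the fact that the exponential remainder is negligible against any polynomial in $n$.

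First I would apply Lemma \ref{convergence} $(iii)$ to rewrite the definition of $\eps_n$ in (\ref{seteps}) in closed form. Setting $\th_n>0$ by $2\cosh\th_n=\l_n=2\sqrt{d^2+1}-\m_n$, so that $\sqrt{\l_n^2-4}=2\sinh\th_n$, I would combine
\[
\langle\d_0, R_{Y_n}(\l_n)\d_0\rangle=\frac{\tanh((n+1)\th_n)}{2\sinh\th_n}=\frac{1}{2(d+\eps_n)}
\]
to obtain the key identity
\[
d+\eps_n=\frac{\sinh\th_n}{\tanh((n+1)\th_n)}.
\]

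Next I would perform a Taylor expansion as $\m_n\to 0$, which forces $\th_n\to\th_0$ with $\cosh\th_0=\sqrt{d^2+1}$ and $\sinh\th_0=d$. From $\cosh\th_n=\sqrt{d^2+1}-\m_n/2$, a direct computation of $\sinh^2\th_n=\cosh^2\th_n-1$ gives
\[
\sinh\th_n=d-\frac{\sqrt{d^2+1}}{2d}\m_n+O(\m_n^2).
\]
Because $\th_n\geq \th_0/2>0$ for $n$ large, the factor $\coth((n+1)\th_n)=1+O(e^{-2(n+1)\th_0})$ contributes only an \emph{exponentially small} error. Combining the two expansions yields
\[
\eps_n=\frac{\sqrt{d^2+1}}{2d}(-\m_n)+O(\m_n^2)+O(e^{-cn})
\]
for some $c>0$ (any $c<\th_0=\log(\sqrt{d^2+1}+d)$ works).

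Finally I would multiply through by $\vol(X_n)=(2n+1)^d$ and read off both implications. The exponential remainder remains $o(1)$ after this multiplication. For the direction ($\Rightarrow$), the hypothesis $-\m_n\vol(X_n)\to 1/c$ forces $\m_n\to 0$ at the polynomial rate $(2n+1)^{-d}$, so $\m_n^2\vol(X_n)=(-\m_n\vol(X_n))\cdot\m_n\to 0$, and the asymptotic identity directly gives $\eps_n\vol(X_n)\to \frac{\sqrt{d^2+1}}{2dc}$, i.e.\ $k^0_n\to c\cdot\frac{2d}{\sqrt{d^2+1}}$. The converse ($\Leftarrow$) is symmetric: once one knows $\eps_n\vol(X_n)$ has a finite positive limit, solving the asymptotic identity for $-\m_n$ and using that $\eps_n \asymp -\m_n$ (from the same identity, which precludes $\m_n^2$ being of the same order as $\m_n$ when both are small) produces the reverse implication. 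I do not expect a real obstacle; the only subtlety is the book-keeping needed to certify that the $O(\m_n^2)$ correction is of smaller order than the leading term in \emph{both} directions of the equivalence, but this is automatic because $\m_n\to 0$ in both scenarios.
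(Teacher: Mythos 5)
Your proof is correct and follows essentially the same route as the paper: both start from the identity $d+\eps_n=\sinh\th_n\coth((n+1)\th_n)$ coming from \eqref{seteps} and Lemma \ref{convergence} $(iii)$, isolate the exponentially small boundary contribution $\coth((n+1)\th_n)-1$, and linearize $\sinh\th_n-d$ in $\m_n$ (the paper by rationalizing $\sqrt{\l_n^2-4}-2d$, you by a Taylor expansion, which is the same computation). No gap; the only cosmetic difference is that the paper works directly with $\bigl(\eps_n(2n+1)^d\bigr)^{-1}$ split into two summands rather than with your asymptotic identity for $\eps_n$.
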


\begin{proof}
By (\ref{seteps}) and Lemma \ref{convergence} $(iii)$, we obtain $\eps_n=\displaystyle\frac{\sqrt{\l_n^2-4}}{2\tanh (n+1)\t_n}-d$, with $2\cosh\t_n=\l_n=\|A_{G^d}\|-\m_n$, and $\|A_{G^d}\|=\l_\infty=2\sqrt{d^2+1}$. Then
\begin{align*}
\frac{1}{\eps_n(2n+1)^d}=\left(\frac{\sqrt{\l_n^2-4}-2d}{2\tanh (n+1)\t_n}(2n+1)^d
+
d\frac{1-\tanh (n+1)\t_n}{\tanh (n+1)\t_n}(2n+1)^d
\right)^{-1}.
\end{align*}
Since $\t_n\to\t_\infty$ with $\cosh\t_\infty=\sqrt{d^2+1}$, $\tanh (n+1)\t_n$ tends to 1 exponentially fast, and the second summand above tends to 0. As for the first summand,
\begin{align*}
\left(\frac{\sqrt{\l_n^2-4}-2d}{2\tanh (n+1)\t_n}(2n+1)^d\right)^{-1}=
\left(\frac{(-2\m_n\l_\infty+\m_n^2)(2n+1)^d}{2(\tanh (n+1)\t_n)(\sqrt{\l_n^2-4}+2d)}\right)^{-1}
\end{align*}
and the latter has a finite limit if and only if $(-\m_n (2n+1)^d)^{-1}$ has a finite limit. The thesis follows since $\vol(X_n)=(2n+1)^d$.
\end{proof}

\begin{Lemma}\label{PreHn-1}
Let us assume $\lim_n\left(-\m_n\vol(X_n)\right)^{-1}=c<\infty$. Then, for any $\eta,\xi\in\cs_0$,  
\begin{align*}
\lim_{n}
\langle \eta, ( H_n)^{-1}\xi \rangle
=\langle\eta,I\otimes R_{\bz}(\l_\infty)\xi\rangle
+\sum_{\vec\jmath,\vec k}
\langle\d_{\vec\jmath},\Phi\d_{\vec k}\rangle\langle\eta_{\vec\jmath},w\rangle
\langle w, \xi_{\vec k}\rangle
 +c\langle\eta,v\rangle\langle v,\xi\rangle.
\end{align*}
\end{Lemma}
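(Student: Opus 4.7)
The plan is to expand $\langle\eta, H_n^{-1}\xi\rangle$ via the resolvent decomposition (\ref{decomposition}), then apply the splitting (\ref{decompPhiBis}) of $\Phi_n$ into its $k^0_n$, $k^+_n$ and $Q_n$ pieces, so as to identify in the limit the three corresponding contributions with the three terms on the right-hand side of the claim. Concretely, combining (\ref{decompH-1}) with (\ref{decompPhiBis}) I would write
\begin{align*}
\langle\eta, H_n^{-1}\xi\rangle &= \langle\eta, I\otimes R_{Y_n}(\l_n)\xi\rangle \\
&\quad + 2d(d+\eps_n)\,k^0_n\sum_{\vec\jmath,\vec k\in[-n,n]^d}\langle\eta_{\vec\jmath}, R_{Y_n}(\l_n)P_0 R_{Y_n}(\l_n)\xi_{\vec k}\rangle \\
&\quad + 2d(d+\eps_n)\sum_{\vec\jmath,\vec k\in[-n,n]^d}\bigl(k^+_n + Q_n(\d_{\vec\jmath},\d_{\vec k})\bigr)\langle\eta_{\vec\jmath}, R_{Y_n}(\l_n)P_0 R_{Y_n}(\l_n)\xi_{\vec k}\rangle,
\end{align*}
and treat each of the three summands separately.

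For the first summand I would invoke Proposition \ref{StrongConv}, applicable because $\l_n=\|A\|-\m_n\to\l_\infty$ with $\l_\infty>\|A_\bz\|$, so $R_{Y_n}(\l_n)\to R_\bz(\l_\infty)$ strongly; combined with the rapid decay of $\eta,\xi\in\cs_0$ this produces $\langle\eta, I\otimes R_\bz(\l_\infty)\xi\rangle$ in the limit. For the second summand (the condensate contribution) I would collapse the rank-one factor as $R_{Y_n}(\l_n)P_0 R_{Y_n}(\l_n) = \|R_{Y_n}(\l_n)\d_0\|^2\,|w_n\rangle\langle w_n|$, exactly as in the proof of Theorem \ref{combdiv}, so that the double sum equals $\|R_{Y_n}(\l_n)\d_0\|^2\,\langle\eta, v_n\rangle\langle v_n, \xi\rangle$; here $\langle\eta, v_n\rangle\to\langle\eta, v\rangle$ by Lemma \ref{convergence}(i) combined with the $\ell^1$-summability of $\vec\jmath\mapsto\|\eta_{\vec\jmath}\|$ (and similarly for $\xi$), while the scalar prefactor $2d(d+\eps_n)\,k^0_n\,\|R_{Y_n}(\l_n)\d_0\|^2$ converges to $c$ by combining Lemma \ref{finitek0} with Lemma \ref{convergence}(iv).

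The main work lies in the third summand. The relevant pointwise limit is $2d(d+\eps_n)\bigl(k^+_n + Q_n(\d_{\vec\jmath},\d_{\vec k})\bigr)\to\langle\d_{\vec\jmath},\Phi\d_{\vec k}\rangle$, which follows from (\ref{ineqalpha''}); combined with the rank-one factor limit this produces the claimed $\Phi$-term. To interchange the infinite double sum with $\lim_n$, I would estimate the remainder from (\ref{ineqalpha''}) by $\a''_n(1+|\vec\jmath-\vec k|^2)\|\eta_{\vec\jmath}\|\,\|\xi_{\vec k}\|$ and bound the boundary tails $\vec\jmath$ or $\vec k\notin[-n,n]^d$ using the a priori estimate $|\langle\d_{\vec\jmath},\Phi\d_{\vec k}\rangle|\leq C(1+|\vec\jmath-\vec k|)^2$ (as in the proof of Theorem \ref{combdiv}); both sums are finite because $\eta,\xi\in\cs_0$, so a three-piece estimate entirely parallel to the one already used in Theorem \ref{combdiv} justifies the interchange of limit and sum. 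The principal obstacle is precisely this uniform control of the quadratic growth factor $1+|\vec\jmath-\vec k|^2$ against the rapid decay of $\eta,\xi\in\cs_0$; however, since the relevant machinery is already in place from the low-dimensional analysis, no essentially new technical ingredient is required.
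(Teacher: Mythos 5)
Your proposal is correct and follows essentially the same route as the paper: the same three-term splitting of $\langle\eta,H_n^{-1}\xi\rangle$ via \eqref{decompH-1} and \eqref{decompPhiBis} (i.e.\ Lemma \ref{2pfDec}), Proposition \ref{StrongConv} for the unperturbed term, the estimate \eqref{ineqalpha''} together with the Theorem \ref{combdiv}-type tail bounds for the $\Phi$-term, and Lemma \ref{finitek0} combined with Lemma \ref{convergence} $(iv)$ to identify the condensate coefficient as $c$. You merely spell out explicitly the interchange-of-limit estimates that the paper delegates to the proof of Theorem \ref{combdiv}, so no further comment is needed.
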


\begin{proof}
Let us recall that, by Lemma \ref{2pfDec},
\begin{align*}
\langle\eta,H_n^{-1}\xi\rangle&=
\langle\eta,I\otimes R_{Y_n}(\l_n)\xi\rangle\\
&+
2d(d+\eps)\sum_{\vec\jmath, \vec k\in[-n,n]^d}(Q_n(\d_{\vec\jmath},\d_{\vec k})+k_n^+)\langle\eta_{\vec\jmath},R_{Y_n}(\l_n)P_0R_{Y_n}(\l_n)\xi_{\vec k}\rangle
\\
&+
2d(d+\eps) k_n^0\sum_{\vec\jmath, \vec k\in[-n,n]^d}\langle\eta_{\vec\jmath},R_{Y_n}(\l_n)P_0R_{Y_n}(\l_n)\xi_{\vec k}\rangle.
\end{align*}
The first summand tends to$ \langle\eta,I\otimes R_{\bz}(\l_\infty)\xi\rangle$ by Lemma \ref{StrongConv}. The second summand tends to $\sum_{\vec\jmath,\vec k}
\langle\d_{\vec\jmath},\Phi\d_{\vec k}\rangle\langle\eta_{\vec\jmath},w\rangle
\langle w, \xi_{\vec k}\rangle$ as in the proof of Theorem \ref{combdiv}, with the aid of (\ref{ineqalpha''}).

By Lemma \ref{finitek0} and by  the proof of Theorem \ref{combdiv}, the last summand tends to 
\[ c \frac{4d^3}{\sqrt{d^2+1}}\|R_{\bz}(\l_\infty)\d_0\|^2\langle\eta,v\rangle\langle v,\xi\rangle.\]
The thesis follows by Lemma \ref{convergence} $(iv)$.
\end{proof}

We also have the following.
\begin{Lemma}\label{ResolventUpToNorm}
When $G^d$ is considered as an infinite perturbation of infinitely many disjoint fibres $\bz$, equation \eqref{eq:Resolvent} is valid for any $\l> \|A\|$.
\end{Lemma}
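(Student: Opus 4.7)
The plan is to instantiate Proposition \ref{Resolvent} for the decomposition of $G^d$ as an infinite additive perturbation of the disjoint union of fibres $\bz$, and to show that the only real obstacle --- invertibility of $I-S(\l)$ --- is removed on the entire range $\l>\|A\|=\|A_{G^d}\|$. In the notation of \eqref{1}, the unperturbed adjacency here is $A_0:=\bigoplus_{\vec\jmath\in\bz^d}A_{\bz}$ acting on $\ell^2(\bz^d)\otimes\ell^2(\bz)$; the perturbation consists only of backbone edges, so $B=C=0$ and $D=A_{\bz^d}\otimes P_0$ with $P_0=|\d_0\rangle\langle\d_0|$ on $\ell^2(\bz)$. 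In particular $R_{A_0}(\l)=I\otimes R_{\bz}(\l)$ is bounded for every $\l>\|A_{\bz}\|=2$.

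The central computation will be the norm of $S(\l)$. Identifying $\overline{\car(D)}$ with $\ell^2(\bz^d)$ via the isometry $x\mapsto x\otimes\d_0$, the operator $S(\l)=DR_{A_0}(\l)|_{\overline{\car(D)}}$ reduces to $\langle\d_0,R_{\bz}(\l)\d_0\rangle\,A_{\bz^d}$, which by \eqref{vzeta} equals $(\l^2-4)^{-1/2}A_{\bz^d}$. Since $A_{\bz^d}$ is self-adjoint with $\|A_{\bz^d}\|=2d$ and the scalar factor is positive for $\l>2$, one obtains the sharp identity
\[
\|S(\l)\|=\frac{2d}{\sqrt{\l^2-4}},
\]
which is strictly less than $1$ exactly when $\l>2\sqrt{d^2+1}=\|A_{G^d}\|$. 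On this range $I-S(\l)$ is therefore invertible by a Neumann series, and the inverse is a bounded operator on $\overline{\car(D)}$.

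Once invertibility is in hand, the RHS of \eqref{eq:Resolvent} defines a bounded operator $T(\l)$ on $\ell^2(VG^d)$ for every $\l>\|A_{G^d}\|$, and the proof concludes by verifying $(\l I-A_p)T(\l)=I$ through exactly the algebraic manipulation already used in the proof of Proposition \ref{Resolvent}: setting $z=(I-S(\l))^{-1}DR_{A_0}(\l)v\in\overline{\car(D)}$, so that $DR_{A_0}(\l)z=S(\l)z$, one checks directly that $(\l I-A_0-D)R_{A_0}(\l)(v+z)=v$. The only substantive point beyond what is in Proposition \ref{Resolvent} is thus the sharp norm formula for $S(\l)$ above; moreover this same formula shows that the range cannot be enlarged, since $\|S(\l)\|=1$ precisely at the threshold $\l=\|A_{G^d}\|$.
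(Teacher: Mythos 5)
Your proposal is correct and follows essentially the same route as the paper: reduce to the case $B=C=0$, $D=A_{\bz^d}\otimes P_0$, identify $S(\l)$ with $\langle\d_0,R_{\bz}(\l)\d_0\rangle A_{\bz^d}=(\l^2-4)^{-1/2}A_{\bz^d}$ via \eqref{vzeta}, and observe that $I-S(\l)$ is invertible exactly for $\l>2\sqrt{d^2+1}=\|A\|$, so that the algebraic identity of Proposition \ref{Resolvent} applies. The only cosmetic difference is that the paper deduces invertibility from $\sqrt{\l^2-4}>\|A_{\bz^d}\|$ by functional calculus while you use the norm bound $\|S(\l)\|<1$ and a Neumann series, which for the self-adjoint $A_{\bz^d}$ is the same condition.
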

\begin{proof}
Indeed the identity becomes
$$
R_{A}(\l) = I \otimes R_{\bz}(\l) + (I-\langle\d_0,R_\bz(\l)\d_0\rangle A_{\bz^d})^{-1}
A_{\bz^d}\otimes R_{\bz}(\l) P_{0} R_{\bz}(\l),
$$
where $P_{0} = |\d_{0}\rangle \langle \d_{0}|$. Such identity has been proved to hold whenever the operator$(I-\langle\d_0,R_\bz(\l)\d_0\rangle A_{\bz^d})$ is invertible.

Since $\langle\d_0,R_\bz(\l)\d_0\rangle=(\l^{2}-4)^{-1/2}$, we have $(I-\langle\d_0,R_\bz(\l)\d_0\rangle A_{\bz^d})=(\l^{2}-4)^{-1/2}(\sqrt{\l^{2}-4} - A_{\bz^d})$, which is invertible whenever $\sqrt{\l^{2}-4}>\|A_{\bz^d}\|=2d$, namely whenever $\l>2\sqrt{d^2+1}=\|A\|$.
\end{proof}

>From the last Lemma, we have
\begin{align*}
\langle\xi,R_{A}(\l)\eta\rangle 
&=
\sum_{\vec\jmath}\langle\eta_{\vec\jmath},R_{\bz}(\l)\xi_{\vec\jmath}\rangle\\
&+\sum_{\vec\jmath,\vec k}
\langle\d_{\vec\jmath},
(I-\langle\d_0,R_\bz(\l)\d_0\rangle A_{\bz^d})^{-1}A_{\bz^d}
\d_{\vec k}\rangle
\langle\eta_{\vec\jmath}, R_{\bz}(\l) P_{0} R_{\bz}(\l) \xi_{\vec k}\rangle.
\end{align*}
Taking the limit for $\l\to\|A\|^+$ in the equation above, and using Lemma \ref{PreHn-1}, we
conclude
\begin{equation*}
\lim_{n}\langle \eta, H_n^{-1}\xi \rangle
 =\langle \eta, H^{-1}\xi \rangle
 +c\langle\eta,v\rangle\langle v,\xi\rangle.
\end{equation*}

\subsection{Conclusion} 
 \begin{Thm}\label{mainthm}
Let  $d\geq3$,  $G^d $ denote the comb $ \bz^{d} \comb \bz$, $\La_n=X_n\comb Y_n$ be its approximation. Moreover, $H_n=(\|A\|-\m_n)I-A_{\La_n}$ denotes the Hamiltonian on $\La_n$ with chemical potential $\m_n$, $H=\|A\|I-A$ denotes the pure hopping Hamiltonian on $G^d$, and $v$ denotes the Perron-Frobenius generalized vector for $A$ considered in Lemma \ref{convergence}. Then:
\item{$(i)$} the adjacency operator $A$ for $G^d$ is transient,
\item{$(ii)$}  $\cs_{0}\subset\cd\big( R_{G^d}(\|A\|)^{1/2}\big)$.

Assume now 
\begin{equation}
\label{assumption}
\lim_n \frac1{-\m_{n}\vol(X_n)}=c<+\infty.
\end{equation}
 Then
\begin{align}
\lim_{n}&\langle \eta, (e^{\beta H_n}-I)^{-1}\xi \rangle=
\langle \eta, (e^{\beta H}-I)^{-1} \xi \rangle + \frac{c}{\b}\langle\eta,v\rangle\langle v,\xi\rangle
\quad \forall \xi,\eta\in\cs_{0},\label{2pfunct}
\\
\lim_{n}&\,\t_{\La_{n}} \bigl( (e^{\beta H_n}-I)^{-1} \bigr)=\,
\t\bigl( (e^{\beta H}-I)^{-1} \bigr)\,,\label{density}
\end{align}
where  $\t_{\La_{n}}$ is the normalized trace on $\La_n$.
\end{Thm}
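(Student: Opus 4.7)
The plan is to derive all four assertions from the resolvent decomposition of Lemma \ref{ResolventUpToNorm}, which for $\l>\|A\|=2\sqrt{d^2+1}$ reads
\begin{equation*}
R_A(\l)=I\otimes R_\bz(\l)+\bigl(I-\langle\d_0,R_\bz(\l)\d_0\rangle A_{\bz^d}\bigr)^{-1}A_{\bz^d}\otimes R_\bz(\l)P_0R_\bz(\l),
\end{equation*}
together with the tools developed above. As $\l\downarrow\|A\|$, by Lemma \ref{convergence}$(iii),(iv)$ we have $\langle\d_0,R_\bz(\l)\d_0\rangle\to(2d)^{-1}$, so that on finitely supported vectors the perturbation summand converges to $2d(2dI-A_{\bz^d})^{-1}A_{\bz^d}\otimes R_\bz(\|A\|)P_0R_\bz(\|A\|)$. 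For part $(i)$ this reduces the finiteness of $\langle\d_{(\vec 0,0)},H^{-1}\d_{(\vec 0,0)}\rangle$ to that of $\langle\d_{\vec 0},(2dI-A_{\bz^d})^{-1}\d_{\vec 0}\rangle=\int_{\bt^d}\frac{dm(\vec\th)}{2\sum_{i=1}^d(1-\cos\th_i)}$, the Green's-function integral at the origin of simple random walk on $\bz^d$, convergent precisely for $d\geq3$. For part $(ii)$, I would expand $\langle\xi,R_A(\|A\|)\xi\rangle$ for $\xi=\sum_{\vec\jmath}\d_{\vec\jmath}\otimes\xi_{\vec\jmath}\in\cs_0$: the diagonal summand is bounded by $\|R_\bz(\|A\|)\|\,\|\xi\|^2$ (since $\|A\|>\|A_\bz\|=2$), while the off-diagonal summand is controlled via the classical $|\vec\jmath-\vec k|^{2-d}$ decay of the Green's kernel on $\bz^d$ together with the Cauchy--Schwarz bound $|\langle\xi_{\vec\jmath},R_\bz(\|A\|)\d_0\rangle|\leq\|R_\bz(\|A\|)\d_0\|\,\|\xi_{\vec\jmath}\|$; rapid decay of $\|\xi_{\vec\jmath}\|$ then makes the double sum convergent, so that $\xi\in\cd(R_A(\|A\|)^{1/2})$.

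For equation \eqref{2pfunct}, I would combine Lemma \ref{reductiontoresolvent}, which gives strong convergence of the bounded remainder $(e^{\b H_n}-I)^{-1}-(\b H_n)^{-1}\to(e^{\b H}-I)^{-1}-(\b H)^{-1}$, with the identity \eqref{Hn-1} established just before the theorem (from Lemma \ref{PreHn-1} under hypothesis \eqref{assumption}):
\begin{equation*}
\lim_n\langle\eta,H_n^{-1}\xi\rangle=\langle\eta,H^{-1}\xi\rangle+c\langle\eta,v\rangle\langle v,\xi\rangle.
\end{equation*}
Writing $(e^{\b H_n}-I)^{-1}=\bigl[(e^{\b H_n}-I)^{-1}-(\b H_n)^{-1}\bigr]+(\b H_n)^{-1}$, taking inner products with $\xi,\eta\in\cs_0$, and passing to the limit in each summand, the $\langle\eta,H^{-1}\xi\rangle/\b$ contributions cancel, leaving exactly the right-hand side of \eqref{2pfunct}.

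For equation \eqref{density}, I would split via the mollifier $f_\eps$ of Proposition \ref{ennz}:
\begin{equation*}
\t_{\La_n}\bigl((e^{\b H_n}-I)^{-1}\bigr)=\int\frac{f_\eps(\l)}{e^{\b\l}-1}\,dN_{H_n}(\l)+\int\frac{1-f_\eps(\l)}{e^{\b\l}-1}\,dN_{H_n}(\l).
\end{equation*}
By Proposition \ref{traceconv}$(ii)$ and $N_H(0)=0$, the first integral converges in the iterated limit $n\to\infty$, $\eps\downarrow0$ to $\int(e^{\b\l}-1)^{-1}dN_H(\l)=\t((e^{\b H}-I)^{-1})$ (finite by the transience established in $(i)$). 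The second integral is the condensate density $n_0$, which must be shown to vanish. Lemmas \ref{tensorPF} and \ref{finitek0} give $E_0(H_n)=\l_n-\|A_{\La_n}\|=-\m_n+O(e^{-cn})\sim-\m_n$, so the Perron--Frobenius occupancy is $(e^{\b E_0(H_n)}-1)^{-1}\sim c\vol(X_n)/\b$; dividing by $|\La_n|=(2n+1)\vol(X_n)$ gives $O(1/(2n+1))\to0$. The main obstacle is to treat the other low-lying eigenvalues in $(0,2\eps]$ simultaneously: those coming from near-maximal eigenvalues $\alpha_{\vec\jmath}$ of $A_{X_n}$ (with $\vec\jmath\neq\vec 0$) produce $H_n$ eigenvalues $\sim-\m_n+\text{const}\cdot|\vec\jmath|^2/(2n+1)^2$ whose aggregate occupancy is, via Riemann-sum comparison, of order $\vol(X_n)\cdot\int_{\bt^d}|\vec\th|^{-2}dm(\vec\th)$ (finite for $d\geq3$), so that after dividing by $|\La_n|$ one again obtains $O(1/(2n+1))$. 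Letting $\eps\to0$ then yields $n_0=0$ and completes the argument. The technical core is precisely this uniform Riemann-sum control, which separates the macroscopically occupied Perron--Frobenius mode from the near-critical bulk.
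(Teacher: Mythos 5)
Your proposal is correct, and for parts $(i)$, $(ii)$ and \eqref{2pfunct} it follows essentially the paper's own route: $(ii)$ and $(i)$ come from the resolvent identity of Lemma \ref{ResolventUpToNorm} together with Lemma \ref{PreHn-1} (your explicit reduction of transience to the finiteness of $\int_{\bt^d}\bigl(\sum_i(1-\cos\th_i)\bigr)^{-1}dm$ for $d\geq3$ just makes visible what is inside those lemmas), and \eqref{2pfunct} is obtained, exactly as in the paper, by splitting off the bounded part via Lemma \ref{reductiontoresolvent} and feeding in \eqref{Hn-1}, with the $(\b H)^{-1}$ terms cancelling. Where you genuinely diverge is \eqref{density}. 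The paper does not touch the finite-volume spectrum at all: it reduces (via Proposition \ref{traceconv}) to $\lim_n\t_{\La_n}(H_n^{-1})$ and then exploits the tensor factorization \eqref{decomposition}, so that $\t_{\La_n}(H_n^{-1})=\t_{[-n,n]}(R_{Y_n}(\l_n))+\t_{[-n,n]^d}(\Phi_n)\,\t_{[-n,n]}(R_{Y_n}(\l_n)P_0R_{Y_n}(\l_n))$; the second factor of the product tends to $0$ because its limit $R_\bz(\|A\|)P_0R_\bz(\|A\|)$ is finite rank, while $\t_{[-n,n]^d}(\Phi_n)$ stays bounded by \eqref{decompPhiBis}, \eqref{ineqalpha''} and assumption \eqref{assumption}, so the perturbation contributes nothing to the normalized trace. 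You instead prove $n_0=0$ by direct mode counting: the Perron--Frobenius mode carries occupancy $\sim c\vol(X_n)/\b$ (using Lemmas \ref{tensorPF}, \ref{convergence}$(iii)$ and \ref{finitek0}, and here assumption \eqref{assumption} is what caps it), and the near-critical modes indexed by $\vec\jmath\neq\vec 0$ have $H_n$-eigenvalues bounded below by $-\m_n+\mathrm{const}\,|\vec\jmath|^2/(2n+1)^2$ (this needs the block decomposition $u_{\vec\jmath}\otimes\ell^2(Y_n)$, interlacing to dispose of the non-top eigenvalues in each block, and the elementary bound $2\sqrt{d^2+1}-\sqrt{4+\a^2}\geq\mathrm{const}\,(2d-\a)$), so their aggregate occupancy is $O(\vol(X_n))$ by the same $\sum|\vec\jmath|^{-2}$ Riemann-sum control as in Lemma \ref{quadraticdiv}; dividing by $|\La_n|=(2n+1)\vol(X_n)$ kills both. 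This route is sound and arguably more physically transparent (the condensate and near-condensate occupy $O(n^d)$ modes in a volume $n^{d+1}$), but it requires carrying out the spectral estimates you only flag, and it also leans on applying Proposition \ref{traceconv}/\ref{ennz} to the toroidal approximants $\La_n=X_n\comb Y_n$, which are not literally truncations $P_nHP_n$, so a density-zero comparison of the wrap-around edges must be added (the paper's trace-factorization argument sidesteps this for the base factor by working with $\Phi_n$ directly). One small slip: the finiteness of $\t\bigl((e^{\b H}-I)^{-1}\bigr)$ comes from the hidden spectrum (the measure $N_H$ vanishes near $0$), not from the transience proved in $(i)$, which is a pointwise, not a tracial, statement.
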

\begin{proof}
Statement $(ii)$ means that, for any $\xi\in\cs_0$, 
$\lim_{\l\to\|A\|^+}\langle\xi,R_{G^d}(\l)\xi\rangle$ is finite, and this follows from Lemmas \ref{PreHn-1}, and 
\ref{ResolventUpToNorm}.
Clearly $(ii)\Rightarrow(i)$ when $\xi=\d_J$.
\\
Equation (\ref{2pfunct}) summarizes the results proved above in this section.
\\
We now prove equation (\ref{density}). First we use Proposition \ref{traceconv} to show that, as in Lemma \ref{reductiontoresolvent}, we only have to compute $\lim_{n}\t_{\La_{n}}(H_n^{-1})$. Using formula \eqref{decomposition}, we have
\[
\t_{\La_{n}}(H_n^{-1})=
\t_{[-n,n]} (R_{Y_{n}}(\l_{n})) + \t_{[-n,n]^d}(\Phi_{n}) \t_{[-n,n]}(R_{Y_{n}}(\l_{n}) P_{0} R_{Y_{n}}(\l_{n}))
\]
where $\t_{[-n,n]^j}$ denotes the normalized trace on $[-n,n]^j$. Again by Proposition  \ref{traceconv} we get 
$\t_{[-n,n]} (R_{Y_{n}}(\l_{n}))\to\t_\bz (R_{\bz}(\|A\|))$ and 
$\t_{[-n,n]}(R_{Y_{n}}(\l_{n}) P_{0} R_{Y_{n}}(\l_{n}))\to
\t_\bz(R_{\bz}(\|A\|) P_{0} R_{\bz}(\|A\|))=0$, since $R_{\bz}(\|A\|) P_{0} R_{\bz}(\|A\|)$ is finite rank. The result follows if we show that $\t_{[-n,n]^d}(\Phi_{n})$ is bounded. 
Let us recall that, by \eqref{decompPhiBis},
$\langle\d_{\vec\jmath},\Phi_n\d_{\vec k}\rangle =2d(d+\eps_n)\bigl(k_n^0+k_n^+ +Q_n(\d_{\vec\jmath},\d_{\vec k})\bigr)$. By formula \eqref{ineqalpha''},
\[
|\t_{[-n,n]^d}(k_n^+ +Q_n)|\leq\a''_n+(2d^2)^{-1}\t_{[-n,n]^d}(\Phi),
\]
while $k_n^0$ is bounded by (\ref{assumption}).
\end{proof}

 \begin{Rem}
 \itm{i} According to the Theorem above, in order to get a finite contribution for the condensate in the two-point function,  condition $|\m_n|\geq const\ n^{-d}$ should be satisfied, for a suitable positive constant (cf. Lemma \ref{finitek0}). In this case, again by the previous theorem, the condensate does not contribute to the density. This is because the condensate is spatially distributed according to the Perron-Frobenius vector, namely around the base graph, therefore the condensate in $\La_n$ grows as $n^{d}$, while the volume grows as $n^{d+1}$.
 
 \itm{ii} Conversely, if we try to construct the thermodynamical state as a limit with fixed density, in particular 
choosing the inverse temperature $\b>0$, a parameter $k>0$, and  $\m_n$ in such a way that
 \begin{equation} 
 \label{hhh}
     \r_{\La_{n}}(\b,\m_{n})=\r_{c}(\b)+k,
 \end{equation}
we do not get a finite two-point function on local vectors.

Indeed, according to the proof of equation (\ref{density}), the only term there which depends on the sequence $\m_n$ is $\t_{[-n,n]^d}(\Phi_{n}P_{u_n})$, or, equivalently, $\t_{\La_{n}}(H_n^{-1}P_{v_n})$. We have
\[
\t_{\La_{n}}(H_n^{-1}P_{v_n})=(\|A\|-\m_n-\|A_{\La_n}\|)^{-1}(2n+1)^{-d-1},
\]
 which, together with equation (\ref{hhh}), gives $\lim_n|\m_n|^{-1}(2n+1)^{-d-1}=k$. But with this choice $\langle\eta, H_n^{-1}P_{v_n}\xi\rangle$  behaves like $|\m_n|^{-1}(2n+1)^{-d}\langle \eta,v_n\rangle\langle v_n,\xi\rangle$, which diverges as soon as $\langle \eta,v_n\rangle\langle v_n,\xi\rangle\ne0$.
 
\itm{iii}
A non locally normal infinite--volume KMS state
can be always constructed on the space 
$\cd_{0}:=\bigcup_{\eps>0}P_{\eps}\ell^{2}(G_{d})$, $P_{\eps}$ being 
the spectral projection of the Hamiltonian $H=\|A\|-A$ corresponding 
the the spectral subspace $[\eps,+\infty)$. 
The two--point funcion is given for $X,Y\in\cd_{0}$, 
$$
\om(a^{+}(\xi)a(\eta))=\big\langle
\xi,\big(e^{\b(\|A\|-A)}-1\big)^{-1}\eta\big\rangle\,.
$$

This can be obtained as infinite volume limit of any sequence of 
finite volume Gibbs states based on any sequence of chemical 
potentials $\m_{n}\to0$. As the elements 
of $\cd_{0}$ are formally orthogonal to the Perron Frobenius eigenvector, no 
amount of condensate can be appreciated in such non locally normal 
state. 
 \end{Rem}

We end the present section by showing that the locally normal states described in the previous theorem are KMS for the dynamics generated on $\ccr(\cs_0)$ by the one--parameter group of Bogoliubov transformations $e^{itH}$.
\begin{Thm}
\label{addd1}
If $d\geq3$ then the states $\om^{(c)}$, with two-point function
\[
\om^{(c)}(a^{+}(\xi)a(\eta))= 
\langle \eta, (e^{\beta H}-I)^{-1} \xi \rangle + \frac{c}{\b }\langle\eta,v\rangle\langle v,\xi\rangle
,
\]
on the CCR algebra $\ccr(\cs_0)$ associated to $\cs_{0}$ are $\b$--KMS w.r.t. the time evolution 
$\a_{t}$ induced on $\ga$ by $T_{t}:=e^{itH}$.
\end{Thm}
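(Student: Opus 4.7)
The plan is to exploit the quasi--free gauge--invariant nature of $\om^{(c)}$ on $\ccr(\cs_0)$ and to reduce the KMS boundary condition on the full CCR algebra to a one--particle covariance identity. A standard characterization (cf.\ \cite{BR2}, Section 5.2.3) asserts that a gauge--invariant quasi--free state $\om$ on $\ccr(\cs_0)$ with covariance $K$, namely $\om(a^+(\xi)a(\eta))=\langle\eta,K\xi\rangle$ for $\xi,\eta\in\cs_0$, is $\b$--KMS for the Bogoliubov dynamics $\a_t$ generated by $H$ if and only if $e^{\b H}K=K+I$ as a sesquilinear form on $\cs_0\times\cs_0$. In our case the covariance decomposes as $K_c=(e^{\b H}-I)^{-1}+\frac{c}{\b}|v\rangle\langle v|$, where the second summand is shorthand for the bilinear form $(\eta,\xi)\mapsto\frac{c}{\b}\langle\eta,v\rangle\langle v,\xi\rangle$ on $\cs_0\times\cs_0$, well defined by Lemma \ref{convergence} and the rapid decay of elements of $\cs_0$ along the base.

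My plan is to check the covariance identity $e^{\b H}K_c=K_c+I$ separately on the two summands. On the thermal part $K_{\text{th}}:=(e^{\b H}-I)^{-1}$ it is a purely algebraic equality of bounded operators on $\ell^2(G^d)$, namely $e^{\b H}(e^{\b H}-I)^{-1}=(e^{\b H}-I)^{-1}+I$. For the condensate part the required relation reduces to the form identity $\langle e^{\b H}\eta,v\rangle=\langle\eta,v\rangle$ for all $\eta\in\cs_0$, equivalently $\langle H^n\eta,v\rangle=0$ for every $n\geq1$. I would prove the base case $n=1$ by a Fubini exchange of sums, legitimate because $H$ is local and bounded, $v$ has uniformly bounded fibre components, and $\eta\in\cs_0$ has rapidly decreasing fibre norms along the base; combining the exchange with the pointwise identity $Hv=0$ (itself a direct consequence of $v$ being a generalized Perron--Frobenius eigenvector for $A$ with eigenvalue $\|A\|$) gives $\langle H\eta,v\rangle=\langle\eta,Hv\rangle=0$. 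Iteration and the norm--convergent expansion of $e^{\b H}$ in powers of $H$ then yield the desired form identity.

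Continuity of $t\mapsto\om^{(c)}(A\a_t(B))$ for $A,B\in\ccr(\cs_0)$ is an immediate consequence of Proposition \ref{add4}, which ensures that $\cs_0$ is $e^{itH}$--invariant and that the associated Bogoliubov unitaries act strongly continuously; continuity then extends from Weyl generators to the whole $\ccr(\cs_0)$ by norm density.

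The main obstacle is the non--$\ell^2$ nature of the generalized Perron--Frobenius vector $v$, which forces the condensate contribution to be handled as a bilinear form on $\cs_0\times\cs_0$ rather than as a bounded operator on $\ell^2(G^d)$. The Fubini exchange $\langle H\eta,v\rangle=\langle\eta,Hv\rangle$ is the delicate point; it is made rigorous by combining locality and boundedness of $H$ with the rapid--decay structure of $\cs_0$ introduced in Section \ref{sec:CombGraphs}. Once this piece is in place, the algebraic thermal identity and the invariance $\langle e^{\b H}\eta,v\rangle=\langle\eta,v\rangle$ together give the covariance KMS relation $e^{\b H}K_c=K_c+I$ on $\cs_0\times\cs_0$, and the standard lifting from covariance to state for quasi--free CCR closes the argument.
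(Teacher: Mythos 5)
Your overall strategy---reduce KMS to a condition on the two--point function and handle the condensate term via $Hv=0$---is in the same spirit as the paper, which likewise invokes the structure result of \cite{BR2} to the effect that a state with this two--point function is ``automatically'' KMS \emph{provided} the maps $t\mapsto\om^{(c)}(W(\xi)\a_t(W(\eta)))$ are continuous. The problem is that your proposal dismisses exactly the step that carries the content of the theorem, and in fact never uses the hypothesis $d\geq3$. First, $(e^{\b H}-I)^{-1}$ is \emph{not} a bounded operator on $\ell^2(G^d)$: since $\|A_{\La_n}\|\to\|A\|$, one has $\|A\|\in\s(A)$, hence $0\in\s(H)$ and the thermal covariance is unbounded. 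So your ``purely algebraic equality of bounded operators'' is not available, and already the finiteness of $\langle\eta,(e^{\b H}-I)^{-1}\xi\rangle$ for $\eta,\xi\in\cs_0$ --- a prerequisite for your form identity $e^{\b H}K_c=K_c+I$ on $\cs_0\times\cs_0$ to make sense --- requires $\cs_0\subset\cd\bigl((e^{\b H}-1)^{-1/2}\bigr)$. This is Theorem \ref{mainthm} $(ii)$ (transience of $A$), and it is precisely where $d\geq3$ enters; for $d\leq2$ the analogous quantity diverges (Theorem \ref{combdiv}). Second, continuity of $t\mapsto\om^{(c)}(W(\xi)\a_t(W(\eta)))$ is \emph{not} an immediate consequence of Proposition \ref{add4}: strong continuity of $T_t$ on $\ell^2$ does not give continuity of $t\mapsto\langle\xi+T_t\eta,(e^{\b H}-1)^{-1}(\xi+T_t\eta)\rangle$ when the operator is unbounded. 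The paper's proof consists essentially of this check: using the quasi--free generating functional and the Weyl relations, it reduces to the continuity of the above quadratic form, and then rewrites it in terms of $(e^{\b H}-1)^{-1/2}\xi$ and $T_t(e^{\b H}-1)^{-1/2}\eta$, which is legitimate exactly because $\cs_0\subset\cd((e^{\b H}-1)^{-1/2})$ by Theorem \ref{mainthm} $(ii)$. Without this, your argument would ``prove'' the statement for $d=1,2$ as well, where it is false.

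A smaller point: your verification $\langle e^{\b H}\eta,v\rangle=\langle\eta,v\rangle$ cannot be completed from the $\ell^2$--norm convergent exponential series alone, because the pairing with the non--$\ell^2$ vector $v$ is not $\ell^2$--continuous. You need a quantitative locality estimate, e.g. $\sum_{\vec\jmath}\|(H^n\eta)_{\vec\jmath}\|\leq C^n\sum_{\vec\jmath}\|\eta_{\vec\jmath}\|$ for $\eta\in\cs_0$ (which holds since $A=I\otimes A_{\bz}+A_{\bz^d}\otimes P_0$ has propagation one along the base and $v=u\otimes w$ has constant fibre norms, cf. Lemma \ref{convergence}), in order to interchange the series in $n$ with the sum over the base defining the pairing. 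This is fixable; the missing use of $d\geq3$ through Theorem \ref{mainthm} $(ii)$ in the finiteness and continuity arguments is the genuine gap.
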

\begin{proof}
By Proposition \ref{add4}, $T_{t}\cs_{0}=\cs_{0}$, thus it induces a
one--parameter group of automorphisms of $\ga$, by putting
$$
\a_{t}(W(\xi)):=W(T_{t}\xi)\,.
$$

By taking into account the form of the two--point function for the
$\om^{(c)}$ (cf. \cite{BR2}, pag. 79), they are automatically KMS, provided that the functions
$$
t\in\br\mapsto\om^{(c)}(W(\xi)\a_{t}(W(\eta)))\,,\quad \xi,\eta\in\cs_{0}
$$
are continuous. Notice that (cf. \cite{BR2}, pag. 42)
\begin{equation}
\label{addd2}
\om^{(c)}(W(\xi))=\exp \bigg\{ -\frac{\|\xi\|^{2}}{4} \bigg\}
\exp \bigg\{ -\frac{\om^{(c)}(a^{+}(\xi)a(\xi))}{2} \bigg\} \,,
\end{equation}
and, by using the commutation rule,
\begin{equation}
\label{addd3}
\om^{(c)}(W(\xi)\a_{t}(W(\eta)))=e^{-i\im \langle \xi, T_{t}\eta\rangle}
\om^{(c)}(W(\xi+T_{t}\eta))\,.
\end{equation}

Thus, by taking into account \eqref{addd2}, \eqref{addd3}, it is
enough to show that
$$
t\in\br\mapsto\om^{(c)}(a^{+}(\xi+T_{t}\eta)a(\xi+T_{t}\eta))
$$
is continuous whenever $\xi,\eta\in\cs_{0}$. Indeed,
\begin{align*}
\om^{(c)}&(a^{+}(\xi+T_{t}\eta)a(\xi+T_{t}\eta))=\\
=&\langle  \xi+\eta, v\rangle\langle v, \xi+\eta\rangle
+\langle \xi+T_{t}\eta,(e^{\b H}-1)^{-1}(\xi+T_{t}\eta) \rangle\\
=&|\langle  v,  \xi+\eta \rangle|^2
+\langle \xi, (e^{\b H}-1)^{-1}\xi \rangle
+\langle T_t (e^{\b H}-1)^{-1/2}\eta, (e^{\b H}-1)^{-1/2}\xi \rangle+\\
+&\langle (e^{\b H}-1)^{-1/2}\xi, T_t(e^{\b H}-1)^{-1/2}\eta\rangle
+\langle \eta, (e^{\b H}-1)^{-1}\eta \rangle
\end{align*}
The proof follows as $\xi\in\cs_{0}$ implies that $\xi\in\cd((e^{\b 
H}-1)^{-1/2})$, as shown in Theorem \ref{mainthm} $(ii)$.
\end{proof}
 
 \medskip\par\noindent{\it Acknowledgements.}
      The first--named author would like to thank L. Accardi for the invitation to the 7th Volterra--CIRM
      International School: "Quantum Probability and Spectral Analysis on Large Graphs", where discussions with
      several participants were very inspiring for the present investigation. He is also grateful to M. Picardello
      for useful discussions on the topic. The second and third named authors would like to thank P. Kuchment for his invitation to the workshop "Analysis on Graphs
and Fractals" at the University of Wales, Cardiff, where some of the results contained in the present paper were
presented.

\end{document}